\newif\ifjems
\newif\ifams
\def\titlerunning#1{\gdef\titrun{#1}}
\def\author#1{\gdef\autrun{\def\and{\unskip, }#1}\gdef\@author{#1}}
\def\address#1{{\def\and{\\\hspace*{18pt}}\renewcommand{\thefootnote}{}%
\footnote {#1}}%
\markboth{\autrun}{\titrun}}
\def\email#1{e-mail: #1}
\def\subjclass#1{{\renewcommand{\thefootnote}{}%
\footnote{\emph{Mathematics Subject Classification (2010):} #1}}}
\def\keywords#1{\par\medskip
\noindent\textbf{Keywords.} #1}
\newtheorem{theorem}{Theorem}[section]
\newtheorem{corollary}[theorem]{Corollary}
\newtheorem{lemma}[theorem]{Lemma}
\newtheorem{proposition}[theorem]{Proposition}
\theoremstyle{definition}
\newtheorem{definition}[theorem]{Definition}
\newtheorem{remark}[theorem]{Remark}
\numberwithin{equation}{section}
\title[Non-renormalized solutions to the continuity equation]{Non-renormalized solutions \\ to the continuity equation}
\date{\today}
\author{Stefano Modena}
\address{Institut f\"ur Mathematik, Universit\"at Leipzig, D-04109 Leipzig, Germany}
\email{stefano.modena@math.uni-leipzig.de}
\author{L\'aszl\'o Sz\'ekelyhidi Jr.}
\address{Institut f\"ur Mathematik, Universit\"at Leipzig, D-04109 Leipzig, Germany}
\email{laszlo.szekelyhidi@math.uni-leipzig.de}
\theoremstyle{definition} \newtheorem{definition}{Definition}[section]
\theoremstyle{definition} \newtheorem{remark}[definition]{Remark}
\theoremstyle{plain} \newtheorem{lemma}[definition]{Lemma}
\theoremstyle{plain} \newtheorem{proposition}[definition]{Proposition}
\theoremstyle{plain} \newtheorem{theorem}[definition]{Theorem}
\theoremstyle{plain} 
\theoremstyle{definition} 
\newcommand{\R}{\mathbb{R}}
\newcommand{\N}{\mathbb{N}}
\newcommand{\Z}{\mathbb{Z}}
\newcommand{\T}{\mathbb{T}}
\renewcommand{\div}{\textrm{div }}
\newcommand{\e}{\varepsilon}
\newcommand{\supp}{\mathrm{supp} \ }
\newcommand{\ackn}{This research was supported by the ERC Grant Agreement No. 724298.}
\newcommand{\keyw}{Transport equation, non-uniqueness, renormalization, convex integration}
\def\grd@save@target#1{%
  \def\grd@target{#1}}
\def\grd@save@start#1{%
  \def\grd@start{#1}}
\tikzset{
  grid with coordinates/.style={
    to path={%
      \pgfextra{%
        \edef\grd@@target{(\tikztotarget)}%
        \tikz@scan@one@point\grd@save@target\grd@@target\relax
        \edef\grd@@start{(\tikztostart)}%
        \tikz@scan@one@point\grd@save@start\grd@@start\relax
        \draw[minor help lines] (\tikztostart) grid (\tikztotarget);
        \draw[major help lines] (\tikztostart) grid (\tikztotarget);
        \grd@start
        \pgfmathsetmacro{\grd@xa}{\the\pgf@x/1cm}
        \pgfmathsetmacro{\grd@ya}{\the\pgf@y/1cm}
        \grd@target
        \pgfmathsetmacro{\grd@xb}{\the\pgf@x/1cm}
        \pgfmathsetmacro{\grd@yb}{\the\pgf@y/1cm}
        \pgfmathsetmacro{\grd@xc}{\grd@xa + \pgfkeysvalueof{/tikz/grid with coordinates/major step}}
        \pgfmathsetmacro{\grd@yc}{\grd@ya + \pgfkeysvalueof{/tikz/grid with coordinates/major step}}
        \foreach \x in {\grd@xa,\grd@xc,...,\grd@xb}
        \node[anchor=north] at (\x,\grd@ya) {\pgfmathprintnumber{\x}};
        \foreach \y in {\grd@ya,\grd@yc,...,\grd@yb}
        \node[anchor=east] at (\grd@xa,\y) {\pgfmathprintnumber{\y}};
      }
    }
  },
  minor help lines/.style={
    help lines,
%    line width=\pgfkeysvalueof{/tikz/grid with coordinates/minor line width},                             % COMANDO PER SPESSORE GRIGLIA PICCOLA
    step=\pgfkeysvalueof{/tikz/grid with coordinates/minor step}
  },
  major help lines/.style={
    help lines,
    line width=\pgfkeysvalueof{/tikz/grid with coordinates/major line width},
    step=\pgfkeysvalueof{/tikz/grid with coordinates/major step}
  },
  grid with coordinates/.cd,
  minor step/.initial=.2,           % PASSO DELLA GRIGLIA PICCOLA
  major step/.initial=1,            % PASSO DELLA GRIGLIA GRANDE
  major line width/.initial=1pt,    % SPESSORE GRIGLIA PICCOLA
%  minor line width/.initial=0.5pt,    % SPESSORE GRIGLIA PICCOLA
}
\begin{document}

%%%%%%%%%%%%%%%%%%%%%%%%%%%%%%%%
%JEMS
%%%%%%%%%%%%%%%%%%%%%%%%%%%%%%%%
\ifjems
\baselineskip=17pt

%%%%%%%%%%%%%%%%

%% In the running head, give an abbreviation of the title. 
\titlerunning{Non-renormalized solutions to the continuity equation}

\title{Non-renormalized solutions \\ to the continuity equation}

\author{Stefano Modena
\and 
L\'aszl\'o Sz\'ekelyhidi}

\date{\today}

\maketitle

\address{S. Modena: Institut f\"ur Mathematik, Universit\"at Leipzig, D-04109 Leipzig, Germany; \email{stefano.modena@math.uni-leipzig.de}
\and
L. Sz\'ekelyhidi: Institut f\"ur Mathematik, Universit\"at Leipzig, D-04109 Leipzig, Germany; \email{laszlo.szekelyhidi@math.uni-leipzig.de}}
\subjclass{Primary 35A02; Secondary 35F10}
\fi

\begin{abstract}
We show that there are continuous, $W^{1,p}$ ($p<d-1$), incompressible vector fields for which uniqueness of solutions to the continuity equation fails.
\keywords{\keyw}
\end{abstract}

\ifams \maketitle \fi

\section{Introduction}

In this paper we consider the continuity equation
\begin{equation}
\label{eq:continuity}
\begin{aligned}
\partial_t \rho  + \div(\rho u) & = 0, \\
\div u & = 0.
\end{aligned}
\end{equation} 
in a $d$-dimensional periodic domain, $d \geq 3$, for a 
%
%
%
%Let $\T^d = \R^d/\Z^d$ be the $d$-dimensional flat torus, $d \geq 3$. Consider the continuity equation for a 
time-dependent incompressible vector field $u : [0,1] \times \T^d  \to \R^d$ and an unknown density $\rho: [0,1] \times \T^d \to \R$. Here and in the sequel $\T^d = \R^d/\Z^d$ is the $d$-dimensional flat torus. We will also always assume, without loss of generality, that the time interval is $[0,1]$. We prove in these notes the following theorem.
\begin{theorem}[Non-uniqueness for Sobolev and continuous vector fields]
\label{thm:nonuniq}
Let $\e>0$, $\bar \rho \in C^\infty(\T^d)$ with $\int_{\T^d}\bar\rho\,dx=0$. Then there exist
\begin{equation*}
\rho \in C\big([0,1]; L^1(\T^d)\big), \quad u \in C([0,1] \times \T^d) \cap \bigcap_{1 \leq p < d-1} C\big([0,1]; W^{1,p} (\T^d)\big)
\end{equation*}
such that $(\rho,u)$ is a weak solution to \eqref{eq:continuity} and $\rho(0) \equiv 0$ at $t=0$, $\rho(1) \equiv \bar \rho$ at $t=1$ and
\begin{equation}
\label{eq:bound:l:infty}
\max_{(t,x) \in [0,1] \times \T^d} |u(t,x)| \leq \e.
\end{equation}
\end{theorem}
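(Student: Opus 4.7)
The plan is to prove Theorem \ref{thm:nonuniq} by a convex integration scheme, building on the Mikado--flow constructions developed for the continuity equation. One constructs inductively a sequence $(\rho_q, u_q, R_q)_{q\ge 0}$ of smooth triples solving the relaxed \emph{continuity--defect equation}
\begin{equation*}
    \partial_t \rho_q + \div(\rho_q u_q) = -\div R_q, \qquad \div u_q = 0,
\end{equation*}
where $R_q$ is a vector defect controlled by $\|R_q(t,\cdot)\|_{L^1(\T^d)} \le \delta_{q+1} \to 0$. The seed $(\rho_0, u_0, R_0)$ is chosen smooth, with $\rho_0(0)=0$, $\rho_0(1)=\bar\rho$, and $u_0$ small in $L^\infty$; every subsequent perturbation is multiplied by a temporal cutoff vanishing near $t=0,1$, so the endpoint conditions and the pointwise bound \eqref{eq:bound:l:infty} are preserved throughout.

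At stage $q+1$ one sets $\rho_{q+1}=\rho_q+\theta_{q+1}$ and $u_{q+1}=u_q+w_{q+1}$, where $\theta_{q+1}$ and $w_{q+1}$ are built by superposing \emph{Mikado--type building blocks}: divergence--free, pressure--less pipe profiles concentrated on disjoint tubes of cross--section $\mu_{q+1}^{-1}$ around lines in a fixed finite set of rational directions $\xi$, oscillating along the tube axis at frequency $\lambda_{q+1}\gg \mu_{q+1}$. A geometric decomposition of (a mollification of) $-R_q$ as a convex combination $\sum_\xi a_\xi(t,x)\xi$ dictates the amplitudes, so that the low--frequency part of the product $\theta_{q+1}w_{q+1}$ cancels $-R_q$. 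The new defect $R_{q+1}$ then comprises only high--frequency oscillation terms, transport errors from advection by $u_q$, and mollification commutators, all of which can be put in divergence form by the usual antidivergence on mean--zero quantities.

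The heart of the argument is the scaling. Normalize $\theta_{q+1}$ and $w_{q+1}$ so that $\|\theta_{q+1}\|_{L^1}\lesssim a_{q+1}$ and $\|w_{q+1}\|_{L^\infty}\lesssim a_{q+1}$ with $\sum_q a_q<\infty$; this will give the required $\rho \in C([0,1];L^1)$, $u\in C([0,1]\times\T^d)$, and the bound \eqref{eq:bound:l:infty} in the limit. The concentration on $(d-1)$--codimensional tubes then yields
\begin{equation*}
    \|w_{q+1}\|_{W^{1,p}} \lesssim a_{q+1}\bigl(\mu_{q+1}^{\,1-(d-1)/p} + \lambda_{q+1}\,\mu_{q+1}^{-(d-1)/p}\bigr),
\end{equation*}
so the hypothesis $p<d-1$ is precisely what makes the exponent $1-(d-1)/p$ negative; a choice $\lambda_{q+1}\ll \mu_{q+1}^{(d-1)/p}$ then kills the second term as well, and summation in $q$ yields $u \in C([0,1];W^{1,p})$ for every $p<d-1$.

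The main obstacle will be the standard three--way balancing of the parameter sequences $(a_q,\mu_q,\lambda_q)$: the new defect $R_{q+1}$ inherits a high--frequency oscillation error of order $a_{q+1}\lambda_{q+1}^{-1}$, a self--interaction error of the form $a_{q+1}^{2}\mu_{q+1}^{\,\alpha}$ coming from $\theta_{q+1}w_{q+1}$ minus its local mean, and a transport error driven by $\|\nabla u_q\|_{C^0}$. Each must be pushed below $\delta_{q+2}$ at a geometric rate, while simultaneously respecting the constraint that $w_{q+1}$ be summably small in $W^{1,p}$, i.e.\ remaining in the regime $p<d-1$. Once compatible sequences of the form $\lambda_q=\lambda_\ast^{\,b^q}$, $\mu_q=\lambda_q^{\beta}$, $a_q=\delta_q^{1/2}$ are selected, all telescoping series close, and the limit $(\rho,u)$ provided by the iteration has the regularity claimed; the time cutoff preserves $\rho_q(0)\equiv 0$, $\rho_q(1)\equiv\bar\rho$ and the $L^\infty$--smallness at every stage, completing the construction.
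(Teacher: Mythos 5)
Your overall scaffolding matches the paper's: a continuity--defect iteration, Mikado building blocks concentrated on tubes of cross--section $\mu^{-1}$, the normalization $\|\theta_{q+1}\|_{L^1}\lesssim a_{q+1}$, $\|w_{q+1}\|_{L^\infty}\lesssim a_{q+1}$ with $\sum a_q<\infty$ to get $u$ continuous and \eqref{eq:bound:l:infty}, and the exponent count showing $p<d-1$ is the borderline. What is missing is precisely the pair of new ideas the paper singles out as necessary at the endpoint $r=\infty$.

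\textbf{The transport term cannot be treated as you claim.} You write that the transport error from advection by $u_q$ can ``be put in divergence form by the usual antidivergence on mean--zero quantities.'' This is where the endpoint case breaks. The transport term is $\partial_t\theta_{q+1} + \div(\theta_{q+1}u_q)$; its dominant piece involves $\lambda_{q+1}\,(u_q\cdot\nabla\Theta)(\lambda_{q+1}\cdot)$, whose antidivergence is $O(1)$ in $L^1$, not $O(\lambda_{q+1}^{-1})$. In the earlier range $r<\infty$ one saves the day by concentration, since $\|\theta\|_{L^1}\lesssim\mu^{-\gamma_1}$ with $\gamma_1=(d-1)(1-1/r')>0$; at $r=\infty$ this exponent is zero and the mechanism collapses. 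Your normalization $\|\theta_{q+1}\|_{L^1}\lesssim a_{q+1}$ is of the same order as the target defect bound, so it cannot absorb this term. The paper's remedy is to partition $[0,1]$ into short intervals $I_i$ with cutoffs $\alpha_i$ and, on each, replace $\Theta(\lambda x)$ by $\Theta(\lambda\Phi_i(t,x))$ where $\Phi_i$ is the \emph{inverse flow} of $u_q$ started at the midpoint $t_i$. Then $(\partial_t+u_q\cdot\nabla)\Phi_i=0$ kills the dangerous $\lambda$-amplified term exactly, and the surviving ``slow $\times$ fast'' structure admits an improved antidivergence of size $O(\lambda^{-1})$ (Lemma~\ref{p:antidiv:transport}, Section~\ref{ss:r:transport}). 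Your proposal never introduces this flow-map composition, and no amount of parameter tuning or mollification substitutes for it.

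\textbf{A second gap appears once the flow maps are introduced.} Because the cutoffs $\alpha_i$ overlap, at generic times two blocks are active simultaneously, and the product $\theta_{q+1}w_{q+1}$ contains cross terms $\Theta(\lambda\Phi_i)W(\lambda\Phi_{i+1})$ with \emph{different} diffeomorphisms. These are not almost-orthogonal in the way same-index products are, and their antidivergence cannot be made small by a single $(\lambda,\mu)$ pair. The paper resolves this by running two interleaved scales $\lambda'\ll\lambda''$, $\mu'\ll\mu''$ on odd vs.\ even intervals, so the cross term is a product of a slow--oscillating factor and a very--fast, $L^1$--concentrated one, and the improved H\"older inequality (Lemma~\ref{p:improved:holder}) makes it small (Section~\ref{ss:r:quadr}, Lemma~\ref{l:rinteraction}). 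Your single-scale scheme would leave these interaction terms of order one. Both omissions are genuine: without the flow-map composition and the two-scale interleaving, the iteration does not close at $r=\infty$, which is the whole content of passing from the earlier result to Theorem~\ref{thm:nonuniq}.
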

\noindent By \emph{weak solution} we mean solution in the sense of distributions. 

\bigskip

It is well known that the theory of classical solutions to \eqref{eq:continuity} is closely connected to the ordinary differential equation
\begin{equation}\label{e:ODE}
\begin{split}
\partial_t X(t,x) & = u(t,X(t,x)), \\
X(0,x) & = x
\end{split}	
\end{equation}
via the formula $\rho(t) \mathcal{L}^d = X(t)_\sharp (\rho(0)\mathcal{L}^d)$ or, equivalently, due to the incompressibility, 
\begin{equation}
\label{eq:ode:sln}
\rho(t, X(t,x))=\rho(0,x).
\end{equation}
In particular, for Lipschitz vector fields $u$ the well-posedness theory for \eqref{eq:continuity} follows from the Cauchy-Lipschitz theory for ordinary differential equations applied to \eqref{e:ODE}.

It is in general of great interest to investigate the existence and uniqueness of weak solutions to  the Cauchy problem for \eqref{eq:continuity} in the case of non-smooth vector fields, and the connection to the Lagrangian problem \eqref{e:ODE}-\eqref{eq:ode:sln}. The general question can be formulated as follows. Fix an exponent $r \in [1,\infty]$, denote by $r'$ its dual H\"older, $1/r + 1/r' = 1$, and assume that a vector field
\begin{equation}
\label{eq:class:u}
u \in L^1 \big(0,1; L^r(\T^d) \big)
\end{equation}
is given. What can be said about existence and uniqueness of weak solutions in the class of densities
\begin{equation}
\label{eq:class:rho}
\rho \in L^\infty \big(0,1; L^{r'}(\T^d) \big) \ ?
\end{equation}
The choice of class \eqref{eq:class:rho}  for $\rho$ is motivated by the fact that for classical solutions to \eqref{eq:continuity} every spatial $L^{r'}$ norm is preserved in time. Once \eqref{eq:class:rho} is fixed, the choice of the class \eqref{eq:class:u} for $u$ is natural, since in this way $\rho u \in L^1((0,1) \times \T^d)$ and thus the notion of distributional solution to \eqref{eq:continuity} is well defined.  

While existence of weak solutions can be easily shown under the assumptions \eqref{eq:class:u}-\eqref{eq:class:rho}, the uniqueness question is much harder. In 1989 R. DiPerna and P.L. Lions \cite{DiPerna:1989vo} proved that uniqueness holds in the class \eqref{eq:class:rho} if 
\begin{equation}
\label{eq:dpl}
Du \in L^1 \big ( 0,1; L^{r}(\T^d) \big)
\end{equation}
i.e. if $u$ enjoys Sobolev regularity with exponent $r$. Moreover, in this case, the incompressibility assumption can be relaxed to $\div u \in L^\infty$. In the class of bounded densities the uniqueness result was later  extended by L. Ambrosio \cite{Ambrosio:2004cva} in 2004, for fields $u \in L^1(0,1; BV)$ with $\div u \in L^\infty$ and very recently by S. Bianchini and P. Bonicatto \cite{Bianchini:2017vf} in 2017 in the case of $BV$ nearly incompressible vector fields. 

In all of these results an important additional feature is the connection to a suitable extension to \eqref{e:ODE}, i.e. the link between the Eulerian and the Lagrangian picture. More precisely, under assumption \eqref{eq:dpl}, there exists a unique distributional solution to \eqref{e:ODE} for a.e. $x$, such that $x \mapsto X(t,x)$ is measure preserving for all $t$ (assuming $\div u = 0$): such flow map is called \emph{regular Lagrangian flow} (see \cite{Ambrosio2017} for a general discussion). Then the unique solution to the continuity or the transport equation is given  by \eqref{eq:ode:sln}, as in the smooth case. 

On the other side, several non-uniqueness counterexamples are known, but they mainly concern the case when the field is ``very far'' from being incompressible (e.g. $\div u \notin L^\infty$, see \cite{DiPerna:1989vo}) or the case when no bounds on  one full derivative of $u$ are available (see, for instance, the counterexample in \cite{DiPerna:1989vo} for a field $u \in L^1 (0,1; W^{s,1})$ for every $s<1$, but $u \notin L^1(0,1; W^{1,1})$ or the counterexample in \cite{Depauw:2003wl} for a field  $u \in L^1(\e, 1; BV)$ for every $\e>0$, but $u \notin L^1(0,1; BV)$). In all these counterexamples, however, non-uniqueness for the PDE \eqref{eq:continuity} is a consequence of a \emph{Lagrangian} non-uniqueness for the associated ODE \eqref{e:ODE}. We refer to \cite{ModSze:2017} and to \cite{Ambrosio2017} for a more detailed discussion. 

Very recently, we proved in \cite{ModSze:2017} the analog of Theorem \ref{thm:nonuniq}, for fields and densities in the class
\begin{equation*}
\rho \in C\big([0,1]; L^{r'}(\T^d)\big), \quad u \in C \big( [0,1]; L^r(\T^d) \big) \cap C\big([0,1]; W^{1,p} (\T^d)\big),
\end{equation*}
with
\begin{equation}
\label{eq:range:rp}
r \in (1,\infty), \quad  p \in [1,\infty)
\end{equation}
and
\begin{equation}
\label{eq:tilde:p}
\frac{1}{r'} + \frac{1}{p} > 1 + \frac{1}{d-1}.
%\tilde p = \frac{1}{1 + \frac{1}{d-1} - \frac{1}{r'}}.
\end{equation}
The result in \cite{ModSze:2017} shows that uniqueness can fail even for incompressible, Sobolev vector fields (i.e. fields for which the Lagrangian problem \eqref{e:ODE} is well posed, in the sense of the regular Lagrangian flow), if the integrability exponent $p$ of $Du$ is much lower than the one provided in \eqref{eq:dpl} by DiPerna and Lions' theory, as specified in \eqref{eq:tilde:p}.
%\begin{theorem}[\cite{ModSze:2017}, Non-uniqueness for Sobolev vector fields]
%\label{thm:nonuniq:old}
%Let $r \in (1,\infty)$ and $\tilde p \in (1,\infty)$ such that 
%\begin{equation}
%\label{eq:tilde:p}
%\frac{1}{r'} + \frac{1}{\tilde p} = 1 + \frac{1}{d-1}.
%%\tilde p = \frac{1}{1 + \frac{1}{d-1} - \frac{1}{r'}}.
%\end{equation}
%Let $\e>0$, $\bar \rho \in C^\infty(\T^d)$ with $\int_{\T^d}\bar\rho\,dx=0$. Then there exist
%\begin{equation*}
%\rho \in C\big([0,1]; L^{r'}(\T^d)\big), \quad u \in C \big( [0,1]; L^r(\T^d) \big) \cap \bigcap_{1 \leq p < \tilde p} C\big([0,1]; W^{1,p} (\T^d)\big)
%\end{equation*}
%such that $(\rho,u)$ is a weak solution to \eqref{eq:continuity} and $\rho(0) \equiv 0$ at $t=0$, $\rho(T) \equiv \bar \rho$ at $t=T$ and
%\begin{equation*}
%\max_{t \in [0,1]} \|u(t)\|_{L^r(\T^d)}  \leq \e.
%\end{equation*}
%\end{theorem}

The end-point $r=\infty$, corresponding in \eqref{eq:tilde:p} to $p < d-1$, is excluded in \cite{ModSze:2017}. The main result of this notes, namely Theorem \ref{thm:nonuniq}, shows that such end-point case can indeed be reached and, in addition, quite surprisingly, the vector field produced by Theorem \ref{thm:nonuniq} is continuous in time and space, not only bounded. 

We postpone to Section \ref{s:comments:proof} a technical discussion about why the case $r=\infty$ was out of reach in \cite{ModSze:2017} and which new ideas are introduced in these notes to deal with such problem.  

We would like now to briefly comment about the continuity of the vector field $u$ produced by Theorem \ref{thm:nonuniq}. It was observed  by L. Caravenna and G. Crippa \cite{Caravenna:2016kg} that the \emph{boundedness} or the \emph{continuity} of the vector field (in addition to some Sobolev regularity) could play a key role in the uniqueness problem in the  class of integrable densities $\rho \in L^1((0,1) \times \T^d)$. It thus turns out to be a very interesting question to ask if, in fact, boundedness or continuity plus Sobolev regularity are enough to guarantee uniqueness. Theorem \ref{thm:nonuniq} shows that this is not the case, if the integrability of $Du$ is lower than a dimensional threshold (precisely, $d-1$).

% there are \emph{Sobolev} and \emph{bounded} or even \emph{continuous} vector fields, for which, nevertheless, uniqueness fails. 

The idea that the boundedness or the continuity of $u$ can play a crucial role in the uniqueness problem is confirmed by the fact that the majority of the result concerning existence and uniqueness of the \emph{regular Lagrangian flow} associated to a Sobolev or $BV$ vector field $u$ assume that $u \in L^\infty$ (see, for instance, the recent survey \cite{Ambrosio2017}). 

On a different point of view, it is a classical result (see, for instance, \cite{evans10}) that the boundedness of $u$, even without any further Sobolev regularity, is enough to have uniqueness, if a small viscosity is added to the continuity equation:
\begin{equation}
\label{eq:transport:diffusion}
\partial_t \rho + \div(\rho u) = \nu \Delta \rho, \qquad \nu > 0,
\end{equation}
while in \cite{ModSze:2017} we showed that uniqueness for \eqref{eq:transport:diffusion} can drastically fail is $u$ is Sobolev, but not bounded. 

%Our Theorem \ref{thm:nonuniq} shows that there are Sobolev vector field (with $Du \in W^{1,p}$ for every $p < d-1$) which are, in addition, continuous in time and space and uniformly bounded by $\e$, for which uniqueness fails. 

The result in Theorem \ref{thm:nonuniq} is quite surprisingly, even in comparison with our previous result in \cite{ModSze:2017}. Indeed, for vector fields produced by Theorem \ref{thm:nonuniq} the Lagrangian picture is very well behaved: first, the Sobolev regularity implies the existence and uniqueness of the regular Lagrangian flow. Second, the continuity of the field implies that the trajectories provided by the regular Lagrangian flow are $C^1$ in time (and this was not the case for the fields produced in \cite{ModSze:2017}). Third, the bound \eqref{eq:bound:l:infty} means that the length of each trajectory is at most $\e>0$, i.e. particles almost don't move (and, again, this was not the case for the fields produced in \cite{ModSze:2017}). Observe also  that $\e$ in \eqref{eq:bound:l:infty} depends neither on the length of the time interval $[0,1]$ nor on the $L^1$ distance between the initial and the final datum $\|\rho(1) - \rho(0)\|_{L^1(\T^d)} = \|\bar \rho\|_{L^1(\T^d)}$. Nevertheless uniqueness in the Eulerian world gets completely lost. 

\bigskip

We conclude this introduction observing that Theorem \ref{thm:nonuniq} is an immediate application of the following theorem, whose proof is the topic of all next sections. 
\begin{theorem}
\label{thm:main}
Let $\e>0$. Let $ \rho_0: [0,1] \times \T^d \to \R$, $u_0 : [0,1] \times \T^d \to \R^d$ be smooth with
\begin{equation}
\label{eq:condition:on:rho:0:u:0}
\begin{split}
\int_{\T^d} \rho_0(0,x) dx & = \int_{\T^d} \rho_0(t,x) dx, \\
{\rm div}\, u_0 & = 0,
\end{split}
\end{equation} 
for every $t \in [0,1]$.  Set 
\begin{equation}
\label{eq:def:E}
E := \big\{t \in [0,1] \ : \ \partial_t \rho_0(t) + {\rm div} \,(\rho_0(t) u_0(t)) = 0 \big\}. 
\end{equation}
Then there exist $\rho : [0,1] \times \T^d \to \R$, $u:[0,1] \times \T^d \to \R^d$ such that
\begin{enumerate}[(a)]
\item $\rho$, $u$ have the following regularity:
\begin{equation*}
\rho \in C\Big([0,1]; L^1 (\T^d)\Big), \qquad u \in C\Big([0,1] \times \T^d \Big) \cap \bigcap_{1\leq p < d-1} C\Big([0,1]; W^{1, p}(\T^d) \Big);
\end{equation*}
\item $(\rho, u)$ is a weak solution to \eqref{eq:continuity}; % \eqref{e:transport} and \eqref{e:incompressible};
\item for every $t \in E$, $\rho(t) = \rho_0(t)$, $u(t) = u_0(t)$;
\item $\rho$ is $\e$-close to $\rho_0$ i.e.
\begin{equation*}
\begin{split}
\max_{t \in [0,1]} \|\rho(t) - \rho_0(t)\|_{L^1(\T^d)} 				& \leq \e. 
\end{split}
\end{equation*}
\end{enumerate}
Condition (d) can be substituted by the following:
\begin{enumerate}[(a)]
\item [(d')] $u$ is $\e$-close to $u_0$ i.e.
\begin{equation*}
\begin{split}
\|u - u_0\|_{C^0([0,1] \times \T^d)} \leq \e.
\end{split}
\end{equation*}
\end{enumerate}
\end{theorem}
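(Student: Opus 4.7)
The plan is to prove Theorem~\ref{thm:main} by a convex integration iteration adapted to the continuity equation. Concretely, I will construct a sequence $(\rho_q, u_q, R_q)_{q \in \N}$ of smooth triples satisfying the \emph{continuity--defect equation}
\begin{equation*}
\partial_t \rho_q + \mathrm{div}(\rho_q u_q) = -\mathrm{div}\, R_q, \qquad \mathrm{div}\, u_q = 0,
\end{equation*}
with $(\rho_0, u_0)$ as given and $R_0$ any smooth vector field whose divergence equals $\partial_t\rho_0 + \mathrm{div}(\rho_0 u_0)$; by \eqref{eq:def:E} and the zero-mean assumption in \eqref{eq:condition:on:rho:0:u:0} this primitive can be chosen supported in $[0,1]\setminus E$. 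At each stage I will prescribe $(\rho_{q+1}, u_{q+1}) = (\rho_q + \theta_{q+1}, u_q + w_{q+1})$ so that $R_{q+1}$ is strictly smaller than $R_q$. Temporal cutoffs vanishing on $E$ will guarantee $(\rho_q, u_q) = (\rho_0, u_0)$ on $E$ for all $q$, yielding property (c). Passing to the limit in $C_t L^1_x$ and $C_t( C^0_x \cap W^{1,p}_x)$ will produce the weak solution $(\rho, u)$ claimed in (a)--(b).

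The step from $q$ to $q+1$ uses \emph{asymmetric Mikado building blocks}: for each direction $e_k$ in a finite set supplied by a geometric decomposition lemma, one constructs a smooth, divergence-free, mean-zero vector field $W_k:\T^d\to\R^d$ parallel to $e_k$, concentrated on pipes of cross-sectional width $\mu_{q+1}^{-1}$ and normalized so that $\|W_k\|_{L^\infty}\sim 1$, together with a smooth, mean-zero scalar field $\Theta_k:\T^d\to\R$ supported on the same pipes with $\|\Theta_k\|_{L^1}\sim 1$ and $\int_{\T^d}\Theta_k W_k\,dx = e_k$. This forces the density profile to have sup-norm $\sim\mu_{q+1}^{d-1}$, and it is precisely this asymmetry --- $W_k$ bounded in $L^\infty$, $\Theta_k$ bounded in $L^1$ --- that corresponds to the endpoint $r=\infty$, $r'=1$. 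The perturbations are then
\begin{equation*}
\theta_{q+1}(t,x) = \sum_k a_k(t,x)\,\Theta_k(\lambda_{q+1} x), \qquad w_{q+1}(t,x) = \sum_k a_k(t,x)\,W_k(\lambda_{q+1} x) + (\text{correctors}),
\end{equation*}
where the amplitudes $a_k$ are obtained from the geometric lemma so that $\sum_k a_k^2\, e_k \approx -R_q$ on a mollified scale, the correctors restore incompressibility and handle lower-order contributions, and $\lambda_{q+1}\gg \mu_{q+1}\gg 1$ separates the oscillation scale from the concentration scale so that, after averaging, $\theta_{q+1}w_{q+1}$ cancels $R_q$ up to a remainder $\mathrm{div}\,R_{q+1}$ recovered through a right inverse of the divergence.

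The relevant step estimates read, schematically,
\begin{equation*}
\|w_{q+1}\|_{L^\infty} \lesssim \|a\|_\infty, \qquad \|\nabla w_{q+1}\|_{L^p} \lesssim \lambda_{q+1}\,\mu_{q+1}^{1-(d-1)/p}\,\|a\|_\infty, \qquad \|\theta_{q+1}\|_{L^1} \lesssim \|a\|_\infty,
\end{equation*}
together with $\|a\|_\infty \lesssim \|R_q\|_\infty^{1/2}$. For $p < d-1$ the exponent $1-(d-1)/p$ is strictly negative, so tuning $\mu_{q+1}$ large relative to $\lambda_{q+1}$ makes the $W^{1,p}$ step summable for every $p<d-1$. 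Choosing the sequence $\|R_q\|_\infty$ to decrease geometrically produces uniform convergence $u_q\to u$ in $C^0$ (hence continuity of the limit), convergence in $W^{1,p}$, and convergence $\rho_q\to\rho$ in $C_t L^1_x$. Taking the first amplitude $\|a^{(0)}\|_\infty$ sufficiently small yields simultaneously the smallness statement (d) via the $L^1$ estimate on $\theta_{q+1}$ and (d$'$) via the $L^\infty$ estimate on $w_{q+1}$.

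The main obstacle, and the genuinely new ingredient compared with \cite{ModSze:2017}, is the construction of the Mikado pair $(\Theta_k,W_k)$ at the endpoint $r=\infty$: there the velocity profiles were allowed to concentrate, which forced $r<\infty$ and yielded at best bounded limits. Here $W_k$ must be simultaneously smooth, bounded in $L^\infty$ uniformly in the concentration parameter, and such that its $W^{1,p}$-norm decays in $\mu_{q+1}$ for every $p<d-1$, while still producing an arbitrary prescribed vector $e_k$ upon pairing with a density profile $\Theta_k$ that has unbounded sup-norm but $L^1$-norm of order one. A secondary technical point is threading the temporal cutoffs near $E$ through every iteration without destroying the Sobolev and continuity estimates; this is handled by arranging that $R_0$, and hence every subsequent $R_q$ and amplitude $a_k$, is supported in $[0,1]\setminus E$.
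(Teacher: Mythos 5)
Your sketch correctly identifies the asymmetric Mikado normalization at the endpoint $r=\infty$ (fields $W_k$ uniformly bounded in $L^\infty$ while the companion densities $\Theta_k$ are bounded in $L^1$; in the paper's Proposition~\ref{p:mikado} this is the choice $a=d-1$, $b=0$), the square-root balance $\|a\|\sim\|R_q\|^{1/2}$ between the density and field amplitudes, and the mechanism $1-(d-1)/p<0$ that makes the $W^{1,p}$ step summable for $p<d-1$. However, it misses the pivotal new device the paper introduces precisely to make the endpoint work: the Lagrangian inverse-flow-map construction for the transport error. After separating off the Nash term $\mathrm{div}(\rho_q w)$, the remaining linear error is $\partial_t\vartheta + \mathrm{div}(\vartheta u_q)$, and the natural bound on its antidivergence is $\|\vartheta\|_{L^1}$. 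At $r'=1$ the $L^1$ norm of the Mikado density does \emph{not} decay with the concentration parameter (the exponent $(d-1)(1-1/r')$ vanishes), so a single-scale perturbation $\vartheta = a\,\Theta(\lambda x)$ leaves an $O(1)$ defect that pushing $\mu$ cannot shrink --- this is exactly the obstruction the paper calls the ``second issue.'' The paper overcomes it by partitioning $[0,1]$ with cutoffs $\alpha_i$ and, on each slab, composing the Mikado pattern with the backward flow $\Phi_i$ of $u_q$, so that $(\partial_t + u_q\cdot\nabla)\bigl(\Theta(\lambda\Phi_i)\bigr)\equiv 0$; the residual transport error then involves only derivatives of the slow amplitude factors and gains a $\lambda^{-1}$ from the improved antidivergence of Lemma~\ref{p:antidiv:transport}. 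This in turn generates cross-terms between overlapping time slabs, which the paper handles by running \emph{two} nested scale pairs $\lambda'\ll\mu'\ll\lambda''\ll\mu''$ on odd and even $i$ (the ``third issue''), estimated via the improved H\"older inequality of Lemma~\ref{p:improved:holder}. None of this appears in your proposal, and without it the iteration does not close at the endpoint.

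Two smaller points. You write $\lambda_{q+1}\gg\mu_{q+1}\gg 1$, but your own $W^{1,p}$ estimate $\|\nabla w\|_{L^p}\lesssim\lambda\,\mu^{1-(d-1)/p}$ requires $\mu$ to dominate $\lambda$, consistent with the paper's hierarchy $\lambda'\ll\mu'\ll\lambda''\ll\mu''$; your sketch is internally inconsistent here. And you claim a single smallness choice for the initial amplitude yields (d) and (d$'$) \emph{simultaneously}: this is false. The rescaling $(\vartheta,w)\mapsto(\eta\vartheta,\eta^{-1}w)$ that shrinks $\|\vartheta\|_{L^1}$ necessarily inflates $\|w\|_{C^0}$ and vice versa, which is precisely why the theorem offers (d) \emph{or} (d$'$), but not both at once.
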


\begin{proof}[Proof of Theorem \ref{thm:nonuniq} assuming Theorem \ref{thm:main}]
Let $\chi: [0,1] \to \R$ be such that $\chi \equiv 0$ on $[0,1/4]$, $\chi \equiv 1$ on $[3/4, 1]$. Apply Theorem \ref{thm:main} with $ \rho_0(t,x) := \chi(t) \bar \rho(x)$, $ u_0 = 0$. By (c), $\rho(0) \equiv 0$ at $t = 0$ and $\rho(1) \equiv \bar \rho$ at $t=1$. Moreover, by (d'), $\|u\|_{C^0} \leq \e$. 
\end{proof}

\ifams \subsection*{Acknowledgement} \ackn  \fi

\section{Comments on the proof}
\label{s:comments:proof}

We describe in this section what problems arise when one tries to extend the proof provided in \cite{ModSze:2017} to Theorem \ref{thm:nonuniq}, i.e. to the end-point case $r=\infty$ and which new ideas are introduced to solve such problems. 

\subsection{Sketch of the paper \cite{ModSze:2017}}

We first briefly sketch the proof  provided in \cite{ModSze:2017} for the analog of Theorem \ref{thm:nonuniq} under the conditions \eqref{eq:range:rp}, \eqref{eq:tilde:p} . The proof is based on a convex integration scheme, with both oscillations and concentration playing a key role. More precisely, the density $\rho$ and the field $u$ are defined as limit of a sequence $(\rho_q)_q$, $(u_q)_q$ of smooth approximate solutions to the continuity equation
\begin{equation}
\label{eq:sk:cont:def}
\partial_t \rho_q + \div (\rho_q u_q) = - \div R_q,
\end{equation}
where $R_q$ is a smooth vector field converging strongly to zero
\begin{equation}
\label{eq:sk:resto}
\|R_q\|_{C_t L^1_x} \lesssim \delta_q
\end{equation}
with $\delta_q = 2^{-q}$ and $(\rho_q)_q$, $(u_q)_q$ satisfy
\begin{subequations}
\label{eq:sk:convergences:1}
\begin{align}
\sum_q \|\rho_q - \rho_{q-1}\|_{C_t L^{r'}_x} & < \infty, \label{eq:sk:conv:rho} \\ 
\sum_q \|u_q - u_{q-1}\|_{C_t L^r_x} & < \infty, \label{eq:sk:conv:u}
\end{align}
\end{subequations}
and
\begin{equation}
\label{eq:sk:convergences:2}
\sum_q \|D  u_q - Du_{q-1}\|_{C_t L^p_x} < \infty.
\end{equation}
In this way $\rho, u$ are a weak solution to \eqref{eq:continuity} and, moreover, they have the desired regularity. 

The sequence $(\rho_q, u_q, R_q)$ is constructed recursively. Assuming $(\rho_{q-1}, u_{q-1}, R_{q-1})$ are given, one defines
\begin{equation}
\label{eq:rhoq:uq}
\rho_{q} = \rho_{q-1} + \vartheta_{q}, \qquad u_{q} = u_{q-1} + w_{q},
\end{equation} 
where
%
%and shows that 
%\begin{equation*}
%\sum_q \|\vartheta_q\|_{C_t L^{r'}_x} < \infty, \quad 
%\sum_q \|w_q\|_{C_t L^r_x} < \infty
%\end{equation*}
%and
%\begin{equation*}
%\sum_q \|D w_q\|_{C_t L^p_x} < \infty \quad \text{for every } p < \tilde p, 
%\end{equation*}
%so that $\rho  \in C^t L^{r'}_x$ and $u \in C^t L^{r}_x \cap C^t W^{1,p}_x$ for every $p < \tilde p$. The definition of $\vartheta_{q}$, $w_{q}$ is as follows. First we write $R_{q-1}$ in coordinates as $R_{q-1} = \sum_{j=1}^d R_{q-1}^j e_j$, where $(e_j)_{j=1,\dots, d}$ is the standard basis in $\R^d$. Then we define
\begin{equation}
\label{eq:sk:def:theta:w}
\vartheta_{q}(t,x) := F \big(R_{q-1}(t,x)\big) \Theta_{\mu_q}(\lambda_q x), \qquad 
w_q(t,x) := G\big(R_{q-1}(t,x)\big) W_{\mu_q} (\lambda_q x)
%\sum_{j=1}^d \sign(R_{q-1}^j) |R_{q-1}^j|^{1/r'} \Theta_{\mu_q}^j(\lambda_q x), \qquad w_q = \sum_{j=1}^d  |R_{q-1}^j|^{1/r} W_{\mu_q}(\lambda_q x),
\end{equation}
where $\lambda_q$ is an \emph{oscillation parameter} and $\mu_q$ is a \emph{concentration parameter}, suitably chosen at each step of iteration, $F,G$ are nonlinear functions and $\{\Theta_\mu\}_{\mu > 0}$ (resp. $\{W_\mu\}_{\mu>0}$) is a family of \emph{Mikado densities} (resp. \emph{Mikado fields}) (see Proposition \ref{p:mikado} below and in particular estimates \eqref{eq:mikado:est}).

It is  proven in \cite{ModSze:2017} that $\vartheta_q, w_q$ satisfy the following estimates:
\begin{subequations}
\begin{align}
\|\vartheta_q\|_{C_t L^{r'}_x} & \lesssim \|R_{q-1}\|_{C_t L^1_x}^{1/r'}, \label{eq:sk:mikado:1} \\
\|w_q\|_{C_t L^r_x} & \lesssim \|R_{q-1}\|_{C_t L^1_x}^{1/r}, \label{eq:sk:mikado:2} \\
\|\vartheta_q\|_{C_t L^{1}_x} & \lesssim \mu_q^{-\gamma_1}, \label{eq:sk:mikado:4} \\
\|w_q\|_{C_t L^1_x} & \lesssim \mu_q^{-\gamma_2}, \label{eq:sk:mikado:5} \\
\|D w_q\|_{C_t L^p_x} & \lesssim \lambda_q \mu_q^{-\gamma_3}, \label{eq:sk:mikado:3}
\end{align}
\end{subequations}
where 
\begin{equation*}
\gamma_1 = (d-1) \bigg(1 - \frac{1}{r'} \bigg), \quad \gamma_2 = (d-1) \bigg(1 - \frac{1}{r} \bigg), \quad \gamma_3 = (d-1) \Bigg[ \frac{1}{r'} + \frac{1}{p} - \bigg( 1 + \frac{1}{d-1} \bigg) \Bigg].  
\end{equation*}
Notice that $\gamma_1 > 0$ because $r < \infty$ (and thus $r' > 1$), $\gamma_2 >0$ because $r>1$ and $\gamma_3 > 0$ because of \eqref{eq:tilde:p}. Estimates \eqref{eq:sk:mikado:1}-\eqref{eq:sk:mikado:2} together with the inductive assumption \eqref{eq:sk:resto} applied to $R_{q-1}$ guarantee the convergences in \eqref{eq:sk:convergences:1}. Estimate \eqref{eq:sk:mikado:3} guarantees the convergence in \eqref{eq:sk:convergences:2}, provided at each step $\mu_q \gg \lambda_q$. 

A computation then shows that, in order for \eqref{eq:sk:cont:def} to be satisfied, $R_q$ must be defined as
\begin{equation}
\label{eq:sk:error}
-R_q = \div^{-1} \bigg[ \underbrace{\div (\vartheta_q w_q - R_{q-1})}_{\text{quadratic term}} + \underbrace{\partial_t \vartheta_q + \div (\vartheta_q u_{q-1}) + \div(\rho_{q-1} w_q)}_{\text{linear term}}  \bigg].
\end{equation}
In order to prove \eqref{eq:sk:resto}, one first use the oscillation parameter $\lambda_q$ to make the (antidivergence of the) quadratic term small. Then, in order to estimate the linear term, one can use concentration. For instance, for the term $\vartheta_q u_{q-1}$, we can use \eqref{eq:sk:mikado:4}
\begin{equation}
\label{eq:sk:transport}
\big\| \div^{-1} \big( \div (\vartheta_q u_{q-1} ) \big) \|_{C_t L^1_x} 
= \| \vartheta_q u_{q-1}\big\|_{C_t L^1_x} 
\lesssim \|\vartheta_q\|_{C_t L^1_x} \leq \mu_q^{-\gamma_1} \leq \delta_q
\end{equation}
provided $\mu_q$ is chosen large enough. A similar estimate holds for $\partial_t \vartheta_q$, again using \eqref{eq:sk:mikado:4}, while for $\rho_{q-1} w_q$ one must use \eqref{eq:sk:mikado:5}. 

This shows that $R_q$ can be suitably defined in order to satisfy \eqref{eq:sk:error}, thus concluding the proof in \cite{ModSze:2017} for the analog of Theorem \ref{thm:nonuniq} under the assumptions \eqref{eq:range:rp}, \eqref{eq:tilde:p}. Let us now discuss why the above proof does not apply to Theorem \ref{thm:nonuniq}, i.e. to the case $r=\infty$, $r' = 1$. 

\subsection{First Issue}

If $r=\infty$, then estimate \eqref{eq:sk:mikado:2} becomes $\|w_q\|_{C_{tx}} \lesssim 1$ and this is not enough to prove the convergence in \eqref{eq:sk:conv:u}. This issue is solved, modifying the definition of $\rho_q, u_q$ in \eqref{eq:rhoq:uq} as
\begin{equation*}
\rho_q := \rho_{q-1} + \eta_q \vartheta_q, \qquad u_q := u_{q-1} + \frac{1}{\eta_q} w_q
\end{equation*}
and choosing $\eta_q := \|R_{q-1}\|^{-1/2}_{C_t L^1_x}$. In this way, using \eqref{eq:sk:mikado:1}, we get
\begin{equation*}
\|\rho_q - \rho_{q-1}\|_{C_t L^1_x} \lesssim \eta_q \|R_{q-1}\|_{C_t L^1_x} \leq \|R_{q-1}\|_{C_t L^1_x}^{1/2} \leq \delta_{q-1}^{1/2}
\end{equation*}
and 
\begin{equation*}
\|u_q - u_{q-1}\|_{C_{tx}} \lesssim \frac{1}{\eta_q} \leq \|R_{q-1}\|_{C_t L^1_x}^{1/2} \leq \delta_{q-1}^{1/2}
\end{equation*}
so that the convergences in \eqref{eq:sk:convergences:1} still holds, and, moreover, the limit vector field $u = \lim u_q$ is continuous, being the uniform limit of smooth fields. See Section \ref{s:main:prop} and, in particular, estimates \eqref{eq:dist:rho:rho:0}, \eqref{eq:dist:u:u:0}.

\subsection{Second Issue}

The second issue concerns the analysis of the linear term in \eqref{eq:sk:error} and in particular estimate \eqref{eq:sk:transport} and the companion estimate for $\partial_t \vartheta_q$. Indeed, if $r=\infty$ and $r'=1$, then $\gamma_1 = 0$ and thus the concentration paramter $\mu_q$ can not be used in \eqref{eq:sk:transport} to make the linear term smaller than $\delta_q$. 

This issue is solved using the inverse flow map associated to $u_{q-1}$, an idea used in \cite{Buckmaster:2014ty} in the framework of the Euler equation, see also \cite{Isett:2016to}, \cite{Buckmaster:2017uz}. Precisely, one separately considers
\begin{equation}
\label{eq:sk:new:linear}
\text{Linear term in \eqref{eq:sk:error}} = \underbrace{\partial_t \vartheta_q + \div(\vartheta_q u_{q-1})}_{\text{transport term}} + \underbrace{\div (\rho_{q-1}u_q)}_{\text{Nash term}}.
\end{equation}
While for the Nash term an estimate similar to \eqref{eq:sk:transport} still holds, since $\gamma_2 = d-1 >0$, in order to treat the transport term, one modifies the definition of $\vartheta_q$ and $w_q$ as follows. The time interval $[0,1]$ is split into $N$ small intervals $\{I_i\}_i$ of size $1/N$. Denoting by $t_i$ the middle point of each $I_i$, one considers the inverse flow map $\Phi_i$ associated to $u_{q-1}$
\begin{equation*}
\begin{cases}
\partial_t \Phi_i + (u_{q-1} \cdot \nabla) \Phi_i & = 0, \\
\Phi_i(t_i,x) & = x
\end{cases}
\end{equation*}
and a partition of unity $\{\alpha_i\}$ subordinated to the partition $\{I_i\}_i$ of $[0,1]$. The definition in \eqref{eq:sk:def:theta:w} is then modified as follows:
\begin{equation}
\label{eq:sk:new:theta:w}
\begin{split}
\vartheta_{q}(t,x) & :=  F \big(R_{q-1}(t,x)\big) \sum_i \alpha_i(t) \Theta_{\mu_q} \big( \lambda_q \Phi_i(t,x) \big), \\ 
w_q(t,x) & := G\big(R_{q-1}(t,x)\big) \sum_i \alpha_i(t)  W_{\mu_q} \big( \lambda_q \Phi_i(t,x) \big).
\end{split}
\end{equation}
With this new definition, the transport term in \eqref{eq:sk:new:linear} assumes the form
\begin{equation*}
\text{Transport term in \eqref{eq:sk:new:linear}} = \sum_i H_i(t,x) \Theta_{\mu_q} \big( \lambda_q \Phi_i(t,x) \big). 
\end{equation*}
The oscillation parameter $\lambda_q$ can now be used to show that
\begin{equation*}
\div^{-1} \bigg[ \text{Transport term in \eqref{eq:sk:new:linear}} \bigg] \approx \frac{1}{\lambda_q} \lesssim \delta_q.
\end{equation*}
See Section \ref{ss:r:transport}.

\subsection{Third Issue}

The third issue appears because of the new definition \eqref{eq:sk:new:theta:w} of $\vartheta_q, w_q$. Indeed if at some time $t \in [0,1]$ two cutoffs $\alpha_i(t) \neq 0$, $\alpha_{i+1}(t) \neq 0$ are active, then in the quadratic term in \eqref{eq:sk:error} a term of the form 
\begin{equation}
\label{eq:sk:interaction}
\div \bigg[ F(R_{q-1}) G(R_{q-1}) \Theta_{\mu_q} \big( \lambda_q \Phi_i(t,x) \big) W_{\mu_q} \big( \lambda_q \Phi_{i+1}(t,x) \big) \bigg]
\end{equation}
appears, i.e. a non-trivial interaction between a Mikado density and a Mikado field. In general there is no reason why one should be able to find a small antidivergence of such term. The problem can be solved, using, at each step $q$ of the construction, two different oscillation parameters $\lambda_q', \lambda_q''$ and two different concentration parameters $\mu_q', \mu_q''$ with
\begin{equation*}
\lambda_q' \ll \lambda_q'', \qquad \mu_q' \ll \mu_q''
\end{equation*}
and modifying one more time the definition of $\vartheta_q, w_q$ as follows:
\begin{equation*}
\begin{split}
\vartheta_q(t,x) & = F(R_{q-1}(t,x)) \Bigg[  \sum_{i \text{ odd}} \alpha_i(t) \Theta_{\mu_q'} \big(\lambda_q' \Phi_i(t,x) \big) + \sum_{i \text{ even}} \alpha_i(t) \Theta_{\mu_q''} \big(\lambda_q'' \Phi_i(t,x) \big) \Bigg], \\
w_q(t,x) & = F(R_{q-1}(t,x)) \Bigg[  \sum_{i \text{ odd}} \alpha_i(t) W_{\mu_q'} \big(\lambda_q' \Phi_i(t,x) \big) + \sum_{i \text{ even}} \alpha_i(t) W_{\mu_q''} \big(\lambda_q'' \Phi_i(t,x) \big) \Bigg].
\end{split}
\end{equation*}
With this new definition,  the main term in the non-trivial interaction in \eqref{eq:sk:interaction} becomes of the form
\begin{equation}
\label{eq:sk:two:par}
\Theta_{\mu'_q} \big( \lambda_q' \Phi_i(t,x) \big) W_{\mu_q''} \big( \lambda_q'' \Phi_{i+1}(t,x) \big) \ \text{ or } \ \Theta_{\mu''_q} \big( \lambda_q'' \Phi_i(t,x) \big) W_{\mu_q'} \big( \lambda_q' \Phi_{i+1}(t,x) \big),
\end{equation}
i.e. the product of a \emph{fast} oscillating function (with frequency $\lambda_q'$) with a \emph{very fast} oscillating function (with frequency $\lambda_q''$), where one of the two factors (namely $W_{\mu_q'}$ or $W_{\mu_q''}$) is small in $L^1(\T^d)$ because of the concentration mechanism (compare with estimate \eqref{eq:sk:mikado:5}). One can then use an improved H\"older inequality (see Lemma \ref{p:improved:holder}) to show that the  terms in \eqref{eq:sk:two:par} are small in $L^1$ and thus conclude the proof of Theorem \ref{thm:nonuniq}.  See Section \ref{ss:r:quadr} and in particular Lemma \ref{l:rinteraction}.

\section{Technical tools}

In this section we provide some technical tools which will be frequently used in the following.
We start by fixing some notation:
\begin{itemize}
\item $\T^d = \R^d / \Z^d$ is the $d$-dimensional flat torus, $d \geq 3$.
\item If $f(t,x)$ is a smooth function of $t \in [0,1]$ and $x \in \T^d$, we denote by 
	\begin{itemize}
	\item $\|f\|_{C^k}$ the sup norm of $f$ together with the sup norm of all its derivatives in time and space up to order $k$;
	\item $\|f(t)\|_{C^k(\T^d)}$ the sup norm of $f$ together with the sup norm  of all its spatial derivatives up to order $k$ at fixed time $t$;
	\item $\|f(t)\|_{L^p(\T^d)}$ the $L^p$ norm of $f$ in the spatial derivatives, at fixed time $t$. Since we will take always $L^p$ norms in the spatial variable (and never in the time variable), we will also use the shorter notation $\|f(t)\|_{L^p}$ to denote the $L^p$ norm of $f$ in the spatial variable.
	\end{itemize}
\item $C^\infty_0(\T^d)$ is the set of smooth functions on the torus with zero mean value.
\item $\N = \{0,1,2, \dots\}$, $\N^* = \N \setminus \{0\}$.
\item We will use the notation $C(A_1, \dots, A_n)$ to denote a constant which depends only on the numbers $A_1, \dots, A_n$.
\end{itemize}

\subsection{Diffeomorphisms of the flat torus}

We discuss in this section standard properties of diffeomorphisms of the flat torus. Let $\Phi: \R^d \to \R^d$ be a smooth diffeomorphism. We say that $\Phi$ is a \emph{diffeomorphism of $\T^d$}, and we write $\Phi : \T^d \to \T^d$, if 
\begin{equation*}
\Phi(x + k) = \Phi(x) + k, \text{ for every } k \in \Z^d. 
\end{equation*}
We say that a diffeomorphism $\Phi : \T^d \to \T^d$ is \emph{measure-preserving} if $|\det D \Phi(x)| = 1$ for every $x \in \T^d$. Given a diffeomorphism $\Phi$, we will often consider
\begin{enumerate}
\item the derivative $D \Phi : \T^d \to \R^{d \times d}$;
\item the inverse-matrix of the derivative $(D \Phi)^{-1} : \T^d \to \R^{d \times d}$;
\item higher order derivatives of the inverse-matrix of the derivative  $D^k ((D \Phi)^{-1}) : \T^d \to \R^{d(k+2)}$.
\end{enumerate}
Observe that, given a matrix $A \in \R^{d \times d}$, with $|\det A| = 1$, it holds $|A| \geq 1$, where $|A| := \max_{|u| = 1} |A u|$ is the norm of matrix $A$. Therefore if $\Phi$ is a measure-preserving diffeomorphism, then $|D \Phi(x)| \geq 1$ for every $x \in \T^d$ and thus $1 \leq \|D \Phi\|_{C^k}^\alpha \leq \|D \Phi\|_{C^k}^\beta$ for every $0 < \alpha < \beta$. Recall also that for a given invertible matrix $A$, 
\begin{equation*}
A^{-1} = \frac{1}{\det A} ({\rm cof}\, A)^T,
\end{equation*}
where $({\rm cof} \, A)^T$ is transpose of the cofactor matrix of $A$. 

\begin{lemma}
\label{l:diffeo}
Let $\Phi: \T^d \to \T^d$ be a measure-preserving smooth diffeomorphism. Then, for every $k \in \N$,
\begin{equation*}
\|D^k ((D \Phi)^{-1})\|_{C^0} \leq C_k \|D \Phi\|_{C^k}^{d-1},
\end{equation*}
%\begin{enumerate}
%\item $\|(D \Phi)^{-1}\|_{C^0} \leq C \|\Phi\|_{C^1}^{d-1}$,
%\item $\| D ((D\Phi)^{-1}) \|_{C^0} \leq C \|\Phi\|_{C^2}^{d-1}$,
%\end{enumerate}
where $C_k$ is a  constant depending only on $k$ (and on the dimension $d$). 
\end{lemma}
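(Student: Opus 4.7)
The plan is to exploit the cofactor formula already recalled in the excerpt, which for a matrix $A$ with $|\det A|=1$ gives $A^{-1}=\pm(\mathrm{cof}\,A)^T$. Since $\Phi$ is measure-preserving, $\det D\Phi(x)\in\{-1,+1\}$ for every $x$, and by continuity of $\det D\Phi$ and connectedness of $\T^d$ the sign is constant. Hence
\[
(D\Phi)^{-1}=\pm(\mathrm{cof}\,D\Phi)^T
\]
pointwise on $\T^d$. This reduces the problem to estimating the $C^k$-norm of the cofactor matrix of $D\Phi$.

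Next I would use the fact that each entry of $\mathrm{cof}\,D\Phi$ is (up to a sign) a $(d-1)\times(d-1)$ minor of $D\Phi$, i.e.\ a homogeneous polynomial of degree exactly $d-1$ in the entries of $D\Phi$, with integer coefficients depending only on $d$. Explicitly, every entry of $(D\Phi)^{-1}$ is a sum of at most $(d-1)!$ monomials of the form $\pm\prod_{j=1}^{d-1}(D\Phi)_{a_j b_j}$ for appropriate index pairs $(a_j,b_j)$.

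Applying the Leibniz rule, $D^k$ of such a monomial expands as a sum of at most $(d-1)^k$ terms, each a product of $d-1$ factors of the form $D^{m_j}(D\Phi)_{a_j b_j}$ with $m_1+\dots+m_{d-1}=k$ and $0\le m_j\le k$. Each such factor is bounded pointwise by $\|D\Phi\|_{C^k}$, so each product is bounded by $\|D\Phi\|_{C^k}^{d-1}$. Summing over the (finitely many, $k$- and $d$-dependent) monomials and Leibniz terms yields
\[
\|D^k((D\Phi)^{-1})\|_{C^0}\le C_k\,\|D\Phi\|_{C^k}^{d-1},
\]
which is the claim. No obstacle is really expected here: the calculation is entirely algebraic, and the role of the measure-preserving assumption is precisely to remove the denominator $\det D\Phi$ so that no negative powers of $|D\Phi|$ appear and the exponent $d-1$ is exactly the degree of the cofactor polynomial. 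The bookkeeping of constants can be handled crudely since the lemma only asks for a qualitative bound $C_k=C(k,d)$.
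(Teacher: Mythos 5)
Your proof is correct and follows the same route as the paper: both use the identity $(D\Phi)^{-1}=\pm(\mathrm{cof}\,D\Phi)^T$ coming from $|\det D\Phi|=1$, and then observe that each entry of the cofactor matrix is a degree-$(d-1)$ polynomial in the entries of $D\Phi$, so the Leibniz rule gives the $C^k$-bound with exponent $d-1$. Your version simply spells out the polynomial/Leibniz bookkeeping that the paper compresses into ``the conclusion now follows from the definition of cofactor matrix.''
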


\begin{proof}
For any fixed $x \in \T^d$ it holds
\begin{equation}
\label{eq:derivative:phi}
\Big|\big[D \Phi(x)\big]^{-1}\Big| = \bigg| \frac{1}{\det D \Phi(x)} ({\rm cof} \, D \Phi(x))^T \bigg| = \big| {\rm cof} \, D \Phi(x)\big|.
\end{equation}  
The conclusion now follows from the definition of cofactor matrix.
%
%
%For the cofactor matrix ${\rm cof } \, D \Phi(x)$ the pointwise estimate
%\begin{equation}
%\label{eq:cofactor}
%|{\rm cof} \, D \Phi(x) | \leq C |D \Phi(x)|^{d-1},
%\end{equation}
%for some constant $C$ depending only on $d$. Therefore from \eqref{eq:derivative:phi} and \eqref{eq:cofactor}
%\begin{equation*}
%\|D \Phi^{-1}\|_{C^0} \leq C \|D \Phi\|^{d-1}_{C^0} \leq \|\Phi\|^{d-1}_{C^1}. 
%\end{equation*} 
%Similarly, taking the derivative in \eqref{eq:derivative:phi}, we get the pointwise estimate
%\begin{equation*}
%|D ((D\Phi^{-1})(x)| \leq C |D^2 \Phi(x)| |D\Phi(x)|^{d-2},
%\end{equation*}
%from which it follows
%\begin{equation*}
%\|D ((D\Phi^{-1})\|_{C^0} \leq C \|D^2 \Phi\|_{C^0} \|D\Phi\|^{d-2}_{C^0} \leq C \|\Phi\|_{C^2}^{d-1}. 
%\end{equation*}
\end{proof}

\begin{lemma}
\label{l:divergence:diffeo}
Let $G: \T^d \to \R^d$, $g: \T^d \to \R$ be smooth and assume $\div G = g$. Let $\Phi : \T^d \to \T^d$ be a measure-preserving diffeomorphism of the torus. Then 
\begin{equation*}
{\rm div}\, \Big[ (D \Phi)^{-1} G(\Phi) \Big] = g (\Phi).
\end{equation*}
%and for every $\lambda \in \N$
%\begin{equation*}
%\frac{1}{\lambda} {\rm div}\, \Big[ (D \Phi)^{-1} G(\lambda \Phi) \Big] = g (\lambda \Phi).
%\end{equation*}
\end{lemma}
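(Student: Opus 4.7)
My plan is to prove the identity by duality, testing against an arbitrary $\psi \in C^\infty(\T^d)$. Pairing both sides with $\psi$ and integrating by parts on the left-hand side (no boundary contributions, since $\T^d$ is closed) gives
\[
\int_{\T^d} \psi(x)\, \div\bigl[(D\Phi)^{-1} G(\Phi)\bigr](x)\,dx = -\int_{\T^d} \nabla \psi(x) \cdot (D\Phi(x))^{-1} G(\Phi(x))\,dx.
\]

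Next I would perform the change of variables $y = \Phi(x)$. Since $\Phi$ is a measure-preserving diffeomorphism of $\T^d$, the map is a smooth bijection with $|\det D\Phi|\equiv 1$, so $dx = dy$. Setting $\tilde\psi := \psi \circ \Phi^{-1}$, the chain rule applied to $\psi = \tilde\psi \circ \Phi$ gives $\nabla \psi(x) = D\Phi(x)^T \nabla \tilde\psi(\Phi(x))$, which after substitution reads $\nabla \psi(\Phi^{-1}(y)) = D\Phi(\Phi^{-1}(y))^T \nabla \tilde\psi(y)$.

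The key algebraic point is then the identity $(A^{-1} v)\cdot (A^T w) = v \cdot w$ for any invertible $A \in \R^{d\times d}$ and $v,w \in \R^d$, which holds because $(A^{-1})^T A^T = (A A^{-1})^T = I$. Applied with $A = D\Phi(\Phi^{-1}(y))$, $v = G(y)$, $w = \nabla \tilde\psi(y)$, the transformed integral collapses to $-\int_{\T^d} \nabla \tilde\psi(y) \cdot G(y)\,dy$, and one more integration by parts combined with $\div G = g$ yields
\[
-\int_{\T^d} \nabla \tilde\psi \cdot G\,dy = \int_{\T^d} \tilde\psi(y)\, g(y)\,dy = \int_{\T^d} \psi(\Phi^{-1}(y))\, g(y)\,dy.
\]
Undoing the substitution rewrites this as $\int_{\T^d} \psi(x)\, g(\Phi(x))\,dx$, and since $\psi$ was arbitrary the desired identity follows in the sense of distributions, and hence pointwise by smoothness of all the objects involved.

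I do not expect any serious obstacle beyond tracking matrix transposes correctly. An alternative pointwise route would be to expand $\div[(D\Phi)^{-1} G(\Phi)]$ with the product rule into the sum of a chain-rule piece, which collapses to $(\div G)(\Phi) = g(\Phi)$ using $(D\Phi)^{-1} D\Phi = I$, and a piece of the form $\sum_{i,j}\partial_i[(D\Phi)^{-1}_{ij}]\,G_j(\Phi)$, which vanishes by the Piola identity $\sum_i \partial_i(\operatorname{cof} D\Phi)_{ji} = 0$; the measure-preserving hypothesis enters there through $\det D\Phi = \pm 1$ being locally constant, so that $(D\Phi)^{-1}$ coincides (up to sign) with $(\operatorname{cof} D\Phi)^T$.
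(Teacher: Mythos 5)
Your proof is correct and follows essentially the same route as the paper: both test against a smooth test function, use the chain rule to introduce $(D\Phi)^T$, cancel it against $(D\Phi)^{-1}$, change variables via the measure-preserving map, and integrate by parts once more using $\div G = g$. The only cosmetic difference is the order in which you integrate by parts and change variables; the alternative pointwise argument via the Piola identity that you mention at the end is a genuinely different (and also valid) approach, but you only sketch it.
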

\begin{proof}
We show that for every $\varphi \in C^\infty(\T^d)$ it holds
\begin{equation}
\label{eq:divergence:diffeo}
 \int_{\T^d} \varphi \, \div \Big[ (D\Phi)^{-1} G(\Phi) \Big] dx = \int_{\T^d} \varphi \, g(\Phi) dx. 
\end{equation}
Set $\tilde \varphi := \varphi \circ \Phi^{-1}$. It holds
\begin{equation*}
\begin{split}
\int_{\T^d} \varphi \, \div \Big[ (D\Phi)^{-1} G(\Phi) \Big] dx 
& = \int_{\T^d} \tilde \varphi (\Phi) \, \div \Big[ (D\Phi)^{-1} G(\Phi) \Big] dx  \\
& = - \int_{\T^d} \big[ (D \Phi)^T \nabla \tilde \varphi(\Phi) \big] \cdot \big[ (D\Phi)^{-1} G(\Phi) \big] dx \\
& = - \int_{\T^d} \nabla \tilde  \varphi(\Phi) \cdot G( \Phi) dx \\
\text{(changing variable $y = \Phi(x)$)} 
& = - \int_{\T^d} \nabla \tilde \varphi \cdot G \, dy \\
& = \int_{\T^d} \tilde \varphi \, \div G \, dy \\
& = \int_{\T^d} \tilde \varphi \, g \, dy \\
& = \int_{\T^d} \varphi(\Phi^{-1})\, g \, dy \\
\text{(changing variable $x = \Phi^{-1}(y)$)} 
& = \int_{\T^d}  \varphi \, g(\Phi) dx,
\end{split}
\end{equation*}
thus concluding the proof of the lemma. 
\end{proof}

\begin{lemma}
\label{l:diffeo:estimate}
Let $g: \T^d \to \R$ be a smooth function. Let $\Phi: \T^d \to \T^d$ be  a measure-preserving diffeomorphism. Then for every $p \in [1, \infty]$ and $k \in \N$, $k \geq 1$,
\begin{equation*}
\begin{split}
\|g \circ \Phi\|_{L^p(\T^d)} & = \|g\|_{L^p(\T^d)}, \\
\end{split}
\end{equation*}
and
\begin{equation*}
\begin{split}
\|g \circ \Phi\|_{W^{k,p}(\T^d)} & \leq C_k  \|D \Phi\|_{C^{k-1}(\T^d)}^k \|g\|_{W^{k,p}(\T^d)}.
\end{split}
\end{equation*}
\end{lemma}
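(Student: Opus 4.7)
The plan is to handle the two inequalities separately. The first identity $\|g\circ\Phi\|_{L^p} = \|g\|_{L^p}$ is an immediate application of the change of variables formula: since $\Phi$ is measure preserving, $|\det D\Phi| \equiv 1$, so substituting $y = \Phi(x)$ in $\int_{\T^d}|g(\Phi(x))|^p\,dx$ yields $\int_{\T^d}|g(y)|^p\,dy$ (the case $p=\infty$ is trivial since $\Phi$ is a bijection of $\T^d$).

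For the Sobolev estimate, the main tool is the Faà di Bruno formula. Given a multi-index $\alpha$ with $|\alpha|=k$, the derivative $\partial^\alpha(g\circ\Phi)$ can be written as a finite sum (with combinatorial coefficients depending only on $k$ and $d$) of terms of the form
\begin{equation*}
(\partial^\beta g)(\Phi)\cdot \prod_{j=1}^{|\beta|}\partial^{\gamma_j}\Phi_{i_j},
\end{equation*}
where $1\le|\beta|\le k$, each $|\gamma_j|\ge 1$, and $\sum_j \gamma_j = \alpha$. I would then estimate the $L^p$-norm of each summand by pulling the product of $\Phi$-derivatives out in $L^\infty$:
\begin{equation*}
\bigl\|(\partial^\beta g)(\Phi)\prod_j \partial^{\gamma_j}\Phi_{i_j}\bigr\|_{L^p}
\le \|(\partial^\beta g)(\Phi)\|_{L^p}\prod_{j=1}^{|\beta|}\|\partial^{\gamma_j}\Phi_{i_j}\|_{C^0}.
\end{equation*}
Applying the first part of the lemma to $\partial^\beta g$ turns $\|(\partial^\beta g)(\Phi)\|_{L^p}$ into $\|\partial^\beta g\|_{L^p}\le\|g\|_{W^{k,p}}$, and since $|\gamma_j|\ge 1$ each factor satisfies $\|\partial^{\gamma_j}\Phi_{i_j}\|_{C^0}\le \|D\Phi\|_{C^{|\gamma_j|-1}}\le \|D\Phi\|_{C^{k-1}}$.

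The last bookkeeping step is to absorb the varying number of factors, $|\beta|$, into a single uniform power. Here I would invoke the observation made in the paragraph preceding Lemma~\ref{l:diffeo}: since $|\det D\Phi|=1$, we have $|D\Phi(x)|\ge 1$ pointwise, hence $\|D\Phi\|_{C^{k-1}}\ge 1$. Consequently $\|D\Phi\|_{C^{k-1}}^{|\beta|}\le \|D\Phi\|_{C^{k-1}}^{k}$ for every $1\le|\beta|\le k$, so summing over the finitely many Faà di Bruno terms gives
\begin{equation*}
\|g\circ\Phi\|_{W^{k,p}}\le C_k\|D\Phi\|_{C^{k-1}}^{k}\|g\|_{W^{k,p}},
\end{equation*}
with $C_k$ a purely combinatorial constant depending only on $k$ and the dimension. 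There is no serious conceptual obstacle; the only mildly delicate point is the uniformization of the exponent via the measure-preserving lower bound $\|D\Phi\|_{C^{k-1}}\ge 1$, without which the sharp power $k$ would not appear cleanly on the right-hand side.
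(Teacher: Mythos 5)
Your proof is correct and follows exactly the route the paper indicates (the paper omits the proof, stating only that it is "an easy application of the chain rule"). You have filled in the omitted details soundly, in particular noting the key uniformization step $\|D\Phi\|_{C^{k-1}}\geq 1$ for measure-preserving $\Phi$, which is what lets the varying number of factors $|\beta|\leq k$ be absorbed into a single power $k$.
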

\noindent The proof is an easy application of the chain rule and  thus it is omitted. 

\subsection{Properties of fast oscillations}
We discuss now some properties of fast oscillating periodic functions. For a given $g : \T^d \to \R$ and $\lambda \in \N^*$, we set
\begin{equation*}
g_\lambda(x) := g(\lambda x).
\end{equation*}
Observe that for every $p \in [1, \infty]$ and $k \in \N$,
\begin{equation}
\label{eq:fast:osc:deriv}
\|D^k g_\lambda\|_{L^{p}(\T^d)} = \lambda^k \|D^k g\|_{L^{p}}.
\end{equation}
Moreover, if $G: \T^d \to \R^d$, $g: \T^d \to \R$ are smooth and $\div G = g$, then
\begin{equation}
\label{eq:fast:osc:div}
{\rm div}\, G_\lambda = \lambda g_\lambda.
\end{equation}

\subsubsection{Improved H\"older inequality}

In the same spirit as in \cite{ModSze:2017} and \cite{Buckmaster:2017wf}, we now prove an improved H\"older inequality for the product of a slow oscillating function with a fast oscillating functions composed with a diffeomorphism. 

\begin{lemma}[Improved H\"older inequality]
\label{p:improved:holder}
Let $f,g: \T^d \to \R$ be smooth functions, $\lambda \in \N^*$ and $\Phi: \T^d \to \T^d$ be a measure-preserving diffeomorphism. Then for every $p \in [1,\infty]$,
\begin{equation}
\label{eq:improved:holder}
\|f g_\lambda\|_{L^p} \leq \|f\|_{L^p} \|g\|_{L^p} + \frac{C_p}{\lambda^{1/p}} \|f\|_{C^1} \|g\|_{L^p}
\end{equation}
and
\begin{equation}
\label{eq:improved:holder:diffeo}
\|f \, (g_\lambda \circ \Phi)\|_{L^p} \leq \|f\|_{L^p} \|g\|_{L^p} + \frac{C_p}{\lambda^{1/p}}\|f\|_{C^1} \|D \Phi\|^{d-1}_{C^0} \|g\|_{L^p}.  
\end{equation}
\end{lemma}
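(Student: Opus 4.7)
The plan is to prove \eqref{eq:improved:holder} directly via a piecewise-constant approximation of $f$ on cubes matched to the scale $1/\lambda$, and then to deduce \eqref{eq:improved:holder:diffeo} from \eqref{eq:improved:holder} by performing the measure-preserving change of variables $y=\Phi(x)$.

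For \eqref{eq:improved:holder}, the first step is to partition $\T^d$ into the $\lambda^d$ cubes $Q_i := \lambda^{-1}(k_i + [0,1)^d)$ indexed by $k_i \in \{0,\dots,\lambda-1\}^d$ and to replace $f$ by its piecewise average $\bar f(x) := |Q_i|^{-1}\int_{Q_i} f \, dy$ on each $Q_i$. The crucial observation is that on every $Q_i$ the function $g_\lambda$ covers exactly one full period of $g$, so substituting $y=\lambda x$ and using periodicity yields $\int_{Q_i}|g_\lambda|^p\,dx = \lambda^{-d}\|g\|_{L^p}^p$. Summing over $i$ and applying Jensen's inequality gives
\begin{equation*}
\|\bar f\,g_\lambda\|_{L^p}^p = \sum_i |\bar f|_{Q_i}^p\,\lambda^{-d}\|g\|_{L^p}^p = \|\bar f\|_{L^p}^p\,\|g\|_{L^p}^p \leq \|f\|_{L^p}^p\,\|g\|_{L^p}^p.
\end{equation*}
For the error, $\operatorname{diam} Q_i = \sqrt{d}/\lambda$ gives $\|f-\bar f\|_{L^\infty}\leq \sqrt{d}\,\|f\|_{C^1}/\lambda$, hence $\|(f-\bar f)g_\lambda\|_{L^p}\leq \sqrt{d}\,\|f\|_{C^1}\|g\|_{L^p}/\lambda \leq \sqrt{d}\,\|f\|_{C^1}\|g\|_{L^p}/\lambda^{1/p}$ since $\lambda\geq 1$ and $p\geq 1$. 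The triangle inequality then produces \eqref{eq:improved:holder} with $C_p = \sqrt{d}$.

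For \eqref{eq:improved:holder:diffeo}, I would change variables $y=\Phi(x)$; since $\Phi$ is measure-preserving this turns $\|f\,(g_\lambda\circ\Phi)\|_{L^p}$ into $\|\tilde f\,g_\lambda\|_{L^p}$ with $\tilde f := f\circ\Phi^{-1}$. Applying \eqref{eq:improved:holder} to $\tilde f$ then reduces the problem to translating $\|\tilde f\|_{L^p}$ and $\|\tilde f\|_{C^1}$ back into quantities involving $f$ and $\Phi$. Clearly $\|\tilde f\|_{L^p}=\|f\|_{L^p}$ and $\|\tilde f\|_{C^0}=\|f\|_{C^0}$, while the chain rule together with Lemma \ref{l:diffeo} (applied with $k=0$) yields
\begin{equation*}
\|D\tilde f\|_{C^0} \leq \|Df\|_{C^0}\,\|(D\Phi)^{-1}\|_{C^0} \leq C\,\|f\|_{C^1}\,\|D\Phi\|_{C^0}^{d-1}.
\end{equation*}
Since $\|D\Phi\|_{C^0}\geq 1$ for any measure-preserving diffeomorphism, this bounds $\|\tilde f\|_{C^1}$ by $C\,\|f\|_{C^1}\|D\Phi\|_{C^0}^{d-1}$, and inserting the result into \eqref{eq:improved:holder} produces \eqref{eq:improved:holder:diffeo}.

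This is a standard improved-H\"older estimate in the spirit of those used in \cite{ModSze:2017} and \cite{Buckmaster:2017wf}, so I do not anticipate a serious obstacle. The one delicate point is the interplay between the $\Z^d$-periodicity of $g$ and the cubic partition: the argument genuinely uses $\lambda\in\N^*$ so that the cubes $Q_i$ of side $1/\lambda$ tile $\T^d$ exactly and each carries precisely one period of $g$. Beyond that, the reduction through the change of variables $y=\Phi(x)$ is automatic, and Lemma \ref{l:diffeo} is exactly what supplies the dimensional exponent $d-1$ in \eqref{eq:improved:holder:diffeo}.
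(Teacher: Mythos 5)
Your argument is correct. For the diffeomorphism bound \eqref{eq:improved:holder:diffeo}, your route -- the measure-preserving change of variables $y=\Phi(x)$ to pass to $\tilde f=f\circ\Phi^{-1}$, apply \eqref{eq:improved:holder}, and then translate $\|\tilde f\|_{C^1}$ back to $\|f\|_{C^1}\|D\Phi\|_{C^0}^{d-1}$ via the chain rule and Lemma \ref{l:diffeo}, noting that $\|D\Phi\|_{C^0}\ge 1$ for a measure-preserving $\Phi$ -- is exactly the paper's. For \eqref{eq:improved:holder} itself the paper gives no proof and simply cites Lemma 2.1 of \cite{ModSze:2017}; your self-contained argument via piecewise averaging of $f$ on cubes $Q_i$ of side $1/\lambda$ and the exact equidistribution identity $\int_{Q_i}|g_\lambda|^p\,dx=\lambda^{-d}\|g\|_{L^p}^p$ is the standard one, and you correctly flag that $\lambda\in\N^*$ is used so that the $Q_i$ tile $\T^d$ and each carries precisely one period of $g$. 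Worth noting: because you decompose $f=\bar f+(f-\bar f)$ and run the triangle inequality at the $L^p$ level rather than at the level of $p$-th powers, you actually obtain the sharper factor $1/\lambda$ in place of $1/\lambda^{1/p}$, which you then coarsen to match the stated form.
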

\noindent Here $f \, (g_\lambda \circ \Phi)$ is the function $x \mapsto f(x) g(\lambda \Phi(x))$. 
\begin{proof}
For a proof of \eqref{eq:improved:holder}, see \cite[Lemma 2.1]{ModSze:2017}. Concerning \eqref{eq:improved:holder:diffeo}, we argue as follows. Since $\Phi$ is a measure-preserving diffeomorphism, it holds
\begin{equation*}
\|f \, (g_\lambda \circ \Phi)\|_{L^p} = \|(f \circ \Phi^{-1}) g_{\lambda}\|_{L^p}.
\end{equation*}
Therefore we can apply \eqref{eq:improved:holder} to get
\begin{equation*}
\begin{split}
\|f \, (g_\lambda \circ \Phi)\|_{L^p} & \leq \|f \circ \Phi^{-1}\|_{L^p} \|g\|_{L^p} + \frac{C_p}{\lambda^{1/p}} \|f \circ \Phi^{-1}\|_{C^1} \|g\|_{L^p} \\
\text{(by Lemma \ref{l:diffeo:estimate} and \eqref{eq:fast:osc:deriv})} 
& \leq \|f\|_{L^p} \|g\|_{L^p} + \frac{C_p}{\lambda^{1/p}} \|f\|_{C^1} \|D(\Phi^{-1})\|_{C^0} \|g\|_{L^p} \\
& \leq \|f\|_{L^p} \|g\|_{L^p} + \frac{C_p}{\lambda^{1/p}} \|f\|_{C^1} \|(D\Phi)^{-1}\|_{C^0} \|g\|_{L^p} \\
\text{(by Lemma \ref{l:diffeo})} & \leq \|f\|_{L^p} \|g\|_{L^p} + \frac{C_p}{\lambda^{1/p}} \|f\|_{C^1} \|D\Phi\|^{d-1}_{C^0} \|g\|_{L^p}.  \ifjems \qedhere \fi
\end{split} 
\end{equation*}
\end{proof}

%%\newpage
\subsubsection{Antidivergence operators}

In this section we introduce two antidivergence operators, a standard and an improved one, in the same spirit as in \cite{ModSze:2017}. 

For $f \in C^\infty_0(\T^d)$ there exists a unique $u \in C^\infty_0(\T^d)$ such that $\Delta u = f$. The operator $\Delta^{-1}: C^\infty_0(\T^d) \to C^\infty_0(\T^d)$ is thus well defined. We define the \emph{standard antidivergence operator} as $\nabla \Delta^{-1}: C^\infty_0(\T^d) \to C^\infty(\T^d; \R^d)$. It clearly satisfies $\div (\nabla \Delta^{-1} f) = f$. 

\begin{lemma}
\label{l:antidiv}
For every $k \in \N$ and $p \in [1, \infty]$, the \emph{standard} antidivergence operator satisfies the bounds \begin{equation}
\label{eq:stand:antidiv}
\big\|D^k (\nabla \Delta^{-1} g) \big\|_{L^p} \leq C_{k,p} \|D^k g\|_{L^p}.
\end{equation}
%Moreover for every $\lambda \in \N$ it holds
%\begin{equation}
%\label{eq:stand:antidiv:per}
%\big\|D^k (\nabla \Delta^{-1} g_\lambda)\big\|_{L^p} \leq C_{k,p}\lambda^{k-1} \|D^k g\|_{L^p}.
%\end{equation}
\end{lemma}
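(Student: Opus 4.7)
The plan is to represent $\nabla \Delta^{-1}$ as convolution with an integrable kernel on the torus, and then conclude by Young's inequality. Let $G \in C^\infty(\T^d \setminus \{0\})$ denote the (zero-mean) Green's function of the Laplacian on the flat torus, so that $\Delta G = \delta_0 - 1$ in the sense of distributions (the $-1$ is forced because on the torus $\Delta u$ must have zero mean), and set $K := \nabla G$. For any $g \in C^\infty_0(\T^d)$ one then has $\Delta^{-1} g = G * g$, and hence
\[
\nabla \Delta^{-1} g = K * g \qquad \text{on } \T^d.
\]

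The key technical step, and the main obstacle of the argument, is to verify that $K \in L^1(\T^d; \R^d)$, with a bound $\|K\|_{L^1} \leq M$ for some $M = M(d)$. Away from the origin, $K$ is smooth by elliptic regularity. Near the origin, $G$ agrees with the Newtonian potential up to a smooth correction (obtained, for instance, by periodizing the Newton kernel and subtracting a constant), which yields the pointwise estimate $|K(x)| \leq C |x|^{1-d}$ for $|x|$ small. Since
\[
\int_{|x| < r_0} |x|^{1-d}\, dx = C_d \int_0^{r_0} s^{1-d}\, s^{d-1}\, ds < \infty
\]
in dimension $d \geq 3$, we conclude $K \in L^1(\T^d)$, with $L^1$-norm depending only on $d$.

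With the kernel bound in hand, the rest is routine: since convolution commutes with differentiation, $D^k(K * g) = K * D^k g$ for every multi-index of order $k$ (by integration by parts in the convolution integral, using periodicity to eliminate boundary terms), and Young's convolution inequality on the torus gives
\[
\|D^k(\nabla \Delta^{-1} g)\|_{L^p(\T^d)} = \|K * D^k g\|_{L^p(\T^d)} \leq \|K\|_{L^1(\T^d)}\, \|D^k g\|_{L^p(\T^d)}
\]
for every $p \in [1, \infty]$ and every $k \in \N$. Taking $C_{k,p} := M$ closes the proof; in fact the constant can be chosen independent of $k$ and $p$, depending only on the dimension.
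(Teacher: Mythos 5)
Your proof is correct. The paper gives no in-text proof of this lemma (it simply cites Lemma 2.2 of the earlier work of Modena--Sz\'ekelyhidi for it), but the argument you present --- writing $\nabla\Delta^{-1}g = K*g$ with $K=\nabla G$ the gradient of the torus Green's function, verifying $K\in L^1(\T^d)$ from the $|x|^{1-d}$ singularity, commuting $D^k$ through the convolution, and applying Young's inequality --- is the standard proof of this estimate and, as you note, shows the constant can in fact be taken to depend only on the dimension, not on $k$ or $p$.
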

\noindent For the proof, see \cite[Lemma 2.2]{ModSze:2017} .

We now use introduce an improved antidivergence operator. 
\begin{lemma}
\label{p:antidiv:transport}
Let $f, g: \T^d \to \R$ be smooth function with
\begin{equation*}
\fint g = 0.
\end{equation*}
Let $\lambda \in \N^*$ and $\Phi: \T^d \to \T^d$ be a smooth, measure-preserving diffeomorphism. Then there exists a smooth vector field $u : \T^d \to \R^d$ so that
\begin{equation}
\label{eq:antidiv:transport}
\div u = f \, (g_\lambda \circ \Phi) - \fint f \, (g_\lambda \circ \Phi) 
% \div u(x) = f(x)  g(\lambda \Phi(x)) - \fint f(y) g(\lambda \Phi(y))dy
\end{equation}
and for every $k \in \N$, $p \in [1, \infty]$, 
\begin{equation}
\label{eq:antidiv:transport:estimate}
\|u\|_{W^{k,p}} \leq C_{k,p} \lambda^{k-1} \|f\|_{C^{k+1}} \|D\Phi\|_{C^{k+1}}^{d-1+k} \|g\|_{W^{k,p}}. 
\end{equation}
\end{lemma}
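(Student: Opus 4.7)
The strategy is to exploit the fact that, since $\fint g = 0$, the function $g$ is itself a divergence: setting $G := \nabla\Delta^{-1} g$ one has $\div G = g$, hence by \eqref{eq:fast:osc:div} also $g_\lambda = \tfrac{1}{\lambda}\div G_\lambda$. Applying Lemma \ref{l:divergence:diffeo} to the vector field $G_\lambda$ (whose divergence is the scalar $\lambda g_\lambda$) yields the key identity
\begin{equation*}
g_\lambda\circ\Phi \;=\; \frac{1}{\lambda}\,\div\bigl[(D\Phi)^{-1}(G_\lambda\circ\Phi)\bigr].
\end{equation*}
Multiplying by $f$ and using the Leibniz rule $f\,\div V = \div(fV) - \nabla f\cdot V$ one obtains the decomposition
\begin{equation*}
f\,(g_\lambda\circ\Phi) \;=\; \frac{1}{\lambda}\,\div\bigl[f\,(D\Phi)^{-1}(G_\lambda\circ\Phi)\bigr] \;-\; \frac{1}{\lambda}\,h,
\end{equation*}
where $h(x) := \nabla f(x)\cdot\bigl[(D\Phi(x))^{-1}G_\lambda(\Phi(x))\bigr]$. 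The first term is already a divergence; the second, being a zero-order remainder, will be inverted using the standard antidivergence.

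Accordingly, the plan is to define
\begin{equation*}
u \;:=\; \frac{1}{\lambda}\,f\,(D\Phi)^{-1}(G_\lambda\circ\Phi) \;-\; \frac{1}{\lambda}\,\nabla\Delta^{-1}\!\Bigl(h - \textstyle\fint h\Bigr).
\end{equation*}
A direct computation gives $\div u = f(g_\lambda\circ\Phi) + \tfrac{1}{\lambda}\fint h$, and integrating the Leibniz identity above over $\T^d$ shows $\tfrac{1}{\lambda}\fint h = -\fint f(g_\lambda\circ\Phi)$, so \eqref{eq:antidiv:transport} follows. Smoothness of $u$ is immediate from that of $f$, $g$, $\Phi$ and the regularising effect of $\nabla\Delta^{-1}$.

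For the bound \eqref{eq:antidiv:transport:estimate} I would estimate both summands of $u$ in $W^{k,p}$ by the Leibniz rule together with three ingredients already proved in this section: Lemma \ref{l:diffeo}, which converts derivatives of $(D\Phi)^{-1}$ into the factor $\|D\Phi\|_{C^{k}}^{d-1}$; Lemma \ref{l:diffeo:estimate}, which yields $\|G_\lambda\circ\Phi\|_{W^{k,p}}\lesssim \|D\Phi\|_{C^{k-1}}^{k}\,\|G_\lambda\|_{W^{k,p}}$; and \eqref{eq:fast:osc:deriv} combined with Lemma \ref{l:antidiv}, which give $\|G_\lambda\|_{W^{k,p}}\lesssim \lambda^{k}\|g\|_{W^{k,p}}$. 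Multiplying and collecting the powers, together with the overall prefactor $1/\lambda$ and the crude bound $\|D\Phi\|_{C^{k}}^{d-1}\|D\Phi\|_{C^{k-1}}^{k}\leq \|D\Phi\|_{C^{k+1}}^{d-1+k}$, yields precisely the target bound for the first summand. The second summand is controlled by $\tfrac{1}{\lambda}\|h\|_{W^{k,p}}$ via Lemma \ref{l:antidiv}, and the same three ingredients applied with $\nabla f$ in place of $f$ produce the same estimate, which accounts for the $\|f\|_{C^{k+1}}$ factor.

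The main technical point is the bookkeeping of the $\lambda$-powers: each spatial derivative hitting $G_\lambda\circ\Phi$ generates one factor of $\lambda$ (from differentiating $G_\lambda$) and one factor of $D\Phi$ (from the chain rule), so after $k$ derivatives one picks up $\lambda^{k}$ and $\|D\Phi\|_{C^{k-1}}^{k}$; the overall $1/\lambda$ coming from the decomposition then cancels one power of $\lambda$, leaving the claimed $\lambda^{k-1}$. This $1/\lambda$ gain is exactly what distinguishes the improved antidivergence from the standard one and what makes it useful in the convex-integration scheme of the paper. The only mild subtlety is that $\nabla\Delta^{-1}$ must be applied to a mean-zero function, which is why $\fint h$ is subtracted off explicitly in the definition of $u$.
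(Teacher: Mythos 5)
Your proposal is correct and takes essentially the same route as the paper: you pick the same antiderivative $u$ (built from $G=\nabla\Delta^{-1}g$, the transformed field $(D\Phi)^{-1}(G_\lambda\circ\Phi)$, and a standard antidivergence of the remainder $\nabla f\cdot(D\Phi)^{-1}(G_\lambda\circ\Phi)$), and the estimates invoke the same three tools (Lemma \ref{l:diffeo}, Lemma \ref{l:diffeo:estimate}, \eqref{eq:fast:osc:deriv} with Lemma \ref{l:antidiv}). The only presentational difference is that you pre-derive the Leibniz decomposition and integrate it to identify $\frac{1}{\lambda}\fint h$, whereas the paper expands $\div u$ directly and then integrates by parts; the two are the same computation.
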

We will use the notation
\begin{equation*}
u := \mathcal{R} \Bigg( f \, ( g_\lambda \circ \Phi  ) - \fint f \, (g_\lambda \circ \Phi)  \Bigg).
\end{equation*}

\begin{remark}
The same result holds if $f,g$ are vector fields and we want to solve 
\begin{equation*}
\div u = f \cdot (g_\lambda \circ \Phi) - \fint f \cdot (g_\lambda \circ \Phi) ,
\end{equation*}
where $\cdot$ denotes the scalar product. 
\end{remark}

\begin{proof}
Since $g$ has zero mean value, we can define 
\begin{equation}
\label{eq:def:G}
G := \nabla \Delta^{-1} g
\end{equation}
and
\begin{equation*}
u := \frac{1}{\lambda} \Bigg\{ f (D\Phi)^{-1} \, (G_\lambda \circ \Phi) -  \nabla \Delta^{-1} \bigg[ \nabla f  \cdot \Big( (D \Phi)^{-1} \, (G_\lambda \circ \Phi)  \Big)  - \fint \nabla f \cdot  \Big( (D \Phi)^{-1} \, (G_\lambda \circ \Phi) \Big)     \bigg] \Bigg\}
\end{equation*}
Let us first check that $u$ satisfies \eqref{eq:antidiv:transport}. It holds
\begin{equation*}
\begin{split}
\div u & = \frac{1}{\lambda} \Bigg\{ f \div \Big[  (D \Phi)^{-1} \, (G_\lambda \circ \Phi) \Big] +  \nabla f \cdot (D \Phi)^{-1} \, (G_\lambda \circ \Phi)  \\
& \qquad \quad - \nabla f \cdot (D \Phi)^{-1} \, (G_\lambda \circ \Phi) + \fint \nabla f \cdot  (D \Phi)^{-1} \, (G_\lambda \circ \Phi)   dx \Bigg\} \\
& = \frac{1}{\lambda}  f \div \Big[  (D \Phi)^{-1} \, (G_\lambda \circ \Phi) \Big]  + \frac{1}{\lambda }\fint \nabla f \cdot  (D \Phi)^{-1} \, (G_\lambda \circ \Phi)   dx \\
\text{(integrating by parts)}
& = \frac{1}{\lambda}  f \div \Big[  (D \Phi)^{-1} \, (G_\lambda \circ \Phi) \Big] - \frac{1}{\lambda} \fint f \, \div \Big[(D \Phi)^{-1} \, (G_\lambda \circ \Phi) \Big] dx \\
\text{(by Lemma \ref{l:divergence:diffeo})}
& = \frac{1}{\lambda} f \, (\div G_\lambda) \circ \Phi - \frac{1}{\lambda} \fint f \, (\div G_\lambda) \circ \Phi \, dx \\
\text{(by \eqref{eq:fast:osc:div})}
& = f \, (g_\lambda \circ \Phi) - \fint f\, (g_\lambda \circ \Phi) dx.
\end{split}
\end{equation*}
We prove now that \eqref{eq:antidiv:transport:estimate} holds. We can write $u = \lambda^{-1} ( A - \nabla \Delta^{-1} B)$, where
\begin{equation*}
\begin{split}
A & :=  f \, (D\Phi)^{-1} \, (G_\lambda \circ \Phi), \\
B & := \nabla f \cdot (D \Phi)^{-1} \, (G_\lambda \circ \Phi)  - \fint \nabla f \cdot  (D \Phi)^{-1} \, (G_\lambda \circ \Phi)   dx.
\end{split}
\end{equation*}
Let us estimate $A$:
\begin{equation*}
\begin{split}
\|A\|_{W^{k,p}} 
& \leq \|f\|_{C^k} \|(D \Phi)^{-1}\|_{C^k} \|G_\lambda \circ \Phi\|_{W^{k,p}} \\
\text{(by Lemma \ref{l:diffeo})} 
& \leq \|f\|_{C^k} \|D \Phi\|^{d-1}_{C^k} \|G_\lambda \circ \Phi\|_{W^{k,p}}.
\end{split}
\end{equation*}
Similarly, for $B$:
\begin{equation*}
\begin{split}
\|B\|_{W^{k,p}} 
& \leq 2 \|f\|_{C^{k+1}} \|(D \Phi)^{-1}\|_{C^k} \|G_\lambda \circ \Phi\|_{W^{k,p}} \\
\text{(by Lemma \ref{l:diffeo})} 
& \leq 2 \|f\|_{C^{k+1}} \|D \Phi\|^{d-1}_{C^k} \|G_\lambda \circ \Phi\|_{W^{k,p}}
\end{split}
\end{equation*}
and thus
\begin{equation*}
\begin{split}
\|u\|_{W^{k,p}} 
& \leq \frac{1}{\lambda} \Big\{  \|A\|_{W^{k,p}} + \|\nabla \Delta^{-1} B\|_{W^{k,p}} \Big\} \\
\text{(by Lemma \ref{l:antidiv})}
& \leq \frac{1}{\lambda} \Big\{ \|A\|_{W^{k,p}} + \|B\|_{W^{k,p}} \Big\} \\ 
& \leq \frac{3}{\lambda}    \|f\|_{C^{k+1}} \|D \Phi\|^{d-1}_{C^k} \|G_\lambda \circ \Phi\|_{W^{k,p}}  \\
\text{(by Lemma \ref{l:diffeo:estimate})}
& \leq \frac{C_k}{\lambda} \|f\|_{C^{k+1}} \|D \Phi\|^{d-1+k}_{C^k} \|G_\lambda\|_{W^{k,p}} \\
\text{(by \eqref{eq:fast:osc:deriv})} 
& \leq C_k \lambda^{k-1} \|f\|_{C^{k+1}} \|D \Phi\|^{d-1+k}_{C^k} \|G\|_{W^{k,p}} \\ 
\text{(by Lemma \ref{l:antidiv} and \eqref{eq:def:G})}
& \leq C_k \lambda^{k-1} \|f\|_{C^{k+1}} \|D \Phi\|^{d-1+k}_{C^k} \|g\|_{W^{k,p}}.\ifjems \qedhere \fi
\end{split}
\end{equation*}

%Since $g$ has zero mean value, we can apply Lemma \ref{l:chain:rule} to find a vector field $G: \T^d \to \R^d$ so that
%\begin{equation*}
%\div G = g(\lambda \Phi)
%\end{equation*}
%and 
%\begin{equation}
%\label{eq:antidiv:transport:1}
%\|G\|_{W^{k,p}} \leq C_{k,p} \lambda^{k-1} \|D \Phi\|_{C^{k+1}}^{d-1+k} \|g\|_{W^{k,p}}. 
%\end{equation}
%By Lemma \ref{l:leibniz:rule}, there is a vector field $u: \T^d \to \R^d$ so that
%\begin{equation*}
%\begin{split}
%\div u(x) & :=  f(x) \div G(x) - \fint_{\T^d} f \div G \\
%& = f(x) g(\lambda \Phi(x)) - \fint_{\T^d} f(y) g(\lambda \Phi(y)) dy
%\end{split}
%\end{equation*}
%and
%\begin{equation}
%\label{eq:antidiv:transport:2}
%\|u\|_{W^{k,p}} \leq C_{k,p} \|f\|_{C^{k+1}} \|G\|_{W^{k,p}}.
%\end{equation}
%Combining \eqref{eq:antidiv:transport:1} and \eqref{eq:antidiv:transport:2}, we get the conclusion.
\end{proof}

\begin{remark}
\label{rmk:antidiv:time}
In Lemma \ref{p:antidiv:transport}, if $f,g,\Phi$ are smooth functions of $(t,x)$, $t \in [0,1]$, $x \in \T^d$ and at each time $t \in [0,1]$, they satisfy the assumptions of Lemma \ref{p:antidiv:transport}, then we can apply $\mathcal{R}$  at each time and define a time-dependent vector field $u(t, \cdot)$ satisfying \eqref{eq:antidiv:transport} and \eqref{eq:antidiv:transport:estimate}. Moreover $u$ turns out to be a smooth function of $(t,x)$. 
\end{remark}

%
%
%Remark 2.5. Assume f and g are smooth function of (t,x), t ∈ [0,1], x ∈ Td. If at each time t they satisfy (in the space variable) the assumptions of Lemma 2.3, then we can apply R at each time and define
%u(t, ·) := R􏰊f (t, ·)gλ (t, ·)􏰋,
%where gλ(t,x) = g(t,λx). It follows from the definition of R that u is a smooth
%function of (t, x).

%%\newpage
\subsubsection{Mean value and fast oscillations}

In this section we prodide an estimate on the mean value of the product of a slow oscillating function with a fast oscillating function composed with a diffeomorphism. 

\begin{lemma}
\label{l:mean:value}
Let $f, g : \T^d \to \R$, with $\fint_{\T^d} g = 0$. Let $\lambda \in \N^*$ and $\Phi: \T^d \to \T^d$ be a measure-preserving diffeomorphism. Then
\begin{equation}
\label{eq:mean:value}
\bigg| \fint_{\T^d} f g_\lambda  dx \bigg| \leq \frac{\sqrt{d} \|f\|_{C^1} \|g\|_{L^1}}{\lambda}
\end{equation}
and
\begin{equation}
\label{eq:mean:value:diffeo}
\bigg| \fint_{\T^d} f \, (g_\lambda \circ \Phi) dx \bigg| \leq \frac{\sqrt{d} \|f\|_{C^1} \|D \Phi\|_{C^0}^{d-1} \|g\|_{L^1}}{\lambda}. 
\end{equation}
\end{lemma}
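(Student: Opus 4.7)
The plan is a classical cancellation argument: I exploit that $g$ has zero mean over its period of oscillation, and use the Lipschitz regularity of $f$ to control the error on small cubes.

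For \eqref{eq:mean:value}, I would partition $\T^d$ into $\lambda^d$ axis-parallel cubes $\{Q_j\}$ of side length $1/\lambda$. Because $g$ is $\Z^d$-periodic with zero mean, the change of variables $y = \lambda x$ yields $\int_{Q_j} g_\lambda\,dx = 0$ for every $j$. Fixing a reference point $x_j \in Q_j$, I can then rewrite $\int_{Q_j} f\,g_\lambda\,dx = \int_{Q_j}(f - f(x_j))\,g_\lambda\,dx$ and estimate $|f(x) - f(x_j)| \leq (\sqrt{d}/\lambda)\|\nabla f\|_{C^0}$ since $\operatorname{diam}(Q_j) = \sqrt{d}/\lambda$. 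Summing over $j$ and using the identity $\|g_\lambda\|_{L^1(\T^d)} = \|g\|_{L^1(\T^d)}$ (another periodicity argument) gives the claim.

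For \eqref{eq:mean:value:diffeo}, I would reduce to the previous case via the measure-preserving substitution $y = \Phi(x)$:
\[
\fint_{\T^d} f(x)\,g(\lambda\Phi(x))\,dx = \fint_{\T^d}(f\circ\Phi^{-1})(y)\,g_\lambda(y)\,dy,
\]
and then apply \eqref{eq:mean:value} to $\tilde f := f \circ \Phi^{-1}$. The chain rule yields $\nabla\tilde f(y) = [D\Phi(\Phi^{-1}(y))]^{-T}\nabla f(\Phi^{-1}(y))$, so $\|\nabla\tilde f\|_{C^0} \leq \|(D\Phi)^{-1}\|_{C^0}\|\nabla f\|_{C^0}$, and Lemma \ref{l:diffeo} with $k=0$ controls $\|(D\Phi)^{-1}\|_{C^0}$ by $\|D\Phi\|_{C^0}^{d-1}$ up to a dimensional constant. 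The measure-preservation hypothesis forces $\|D\Phi\|_{C^0}\geq 1$, which absorbs the remaining $C^0$ part of $\|\tilde f\|_{C^1}$.

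The argument is entirely routine and presents no real obstacle; the only mildly delicate point is bookkeeping the dimension-dependent constant in the second inequality, which requires combining the chain rule with the cofactor identity behind Lemma \ref{l:diffeo}.
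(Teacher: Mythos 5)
Your proof is correct and follows essentially the same route as the paper: the change of variables $y=\Phi(x)$ used to deduce \eqref{eq:mean:value:diffeo} from \eqref{eq:mean:value} is exactly the paper's argument, and your cube-decomposition proof of \eqref{eq:mean:value} is the standard argument the paper cites from \cite[Lemma~2.6]{ModSze:2017}. One small sharpening worth noting: since $\det D\Phi=\pm 1$, the singular values $\sigma_1\ge\cdots\ge\sigma_d>0$ of $D\Phi$ satisfy $\prod_i\sigma_i=1$, hence $|(D\Phi)^{-1}|=1/\sigma_d\le\sigma_1^{d-1}=|D\Phi|^{d-1}$ with no dimensional constant, so your chain-rule bound actually recovers \eqref{eq:mean:value:diffeo} exactly as stated rather than ``up to a dimensional constant.''
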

\begin{proof}
For a proof of \eqref{eq:mean:value}, see \cite[Lemma 2.6]{ModSze:2017}. The proof of \eqref{eq:mean:value:diffeo} follows from \eqref{eq:mean:value}, observing that
\begin{equation*}
\fint f(x) g (\lambda \Phi(x)) dx = \fint f(\Phi^{-1}(y)) g(\lambda y) dy. \ifjems \qedhere \fi
\end{equation*}
\end{proof}

%%\newpage

%%%\newpage
\section{Statement of the main proposition and proof of Theorem \ref{thm:main}}
\label{s:main:prop}

We assume without loss of generality $\T^d$ is the periodic extension of the unit cube $[0,1]^d$. 
The following proposition contains the key facts used to prove Theorem \ref{thm:main}. Let us first introduce the \emph{continuity-defect} equation:
\begin{equation}
\label{eq:cont:reyn}
\left\{
\begin{split}
\partial_t \rho + \div(\rho u) 	& = - \div R, \\
\div u 								& = 0.
\end{split}
\right.
\end{equation}
We will call $R$ the \emph{defect field}. 

\begin{proposition}
\label{p:main}
There exists a constant $M>0$ such that the following holds. Let $p \in [1, d-1)$, $\eta, \delta> 0$ and let $(\rho_0, u_0, R_0)$ be a smooth solution of the continuity-defect equation \eqref{eq:cont:reyn}. Then there exists another smooth solution $(\rho_1, u_1, R_1)$ of \eqref{eq:cont:reyn} such that for every $t \in [0,1]$,
\begin{subequations}
\begin{align}
\|\rho_1(t) - \rho_0(t)\|_{L^1(\T^d)} & \leq M \eta \|R_0(t)\|_{L^1(\T^d)}, \label{eq:dist:rho:stat} \\
\|u_1(t) - u_0(t)\|_{C^0(\T^d)} & \leq M \eta^{-1} \label{eq:dist:u:1:stat} \\
\|u_1(t) - u_0(t)\|_{W^{1,p}(\T^d)} & \leq \delta,                   \label{eq:dist:u:2:stat} \\
\|R_1(t)\|_{L^1(\T^d)} 						& \leq \delta,					 \label{eq:reyn:stat}
\end{align}
\end{subequations}
%\begin{subequations}
%\begin{align}
%\|\rho_1(t) - \rho_0(t)\|_{L^1(\T^d)} & \leq M \eta \|R_0(t)\|_{L^1(\T^d)}, \label{eq:dist:rho:stat} \\
%\|u_1(t) - u_0(t)\|_{C^0(\T^d)} & \leq 
%\begin{cases}
%0, & \text{if } R_0(t) \equiv 0, \\
%M \eta^{-1}, \text{otherwise},
%\end{cases}
%\label{eq:dist:u:1:stat} \\
%\|u_1(t) - u_0(t)\|_{W^{1,\tilde p}(\T^d)} & \leq \delta,                   \label{eq:dist:u:2:stat} \\[.1em]
%\|R_1(t)\|_{L^1(\T^d)} 						& \leq
%\begin{cases}
%0, 						& \text{if } R_0(t) \equiv 0, \\
%\delta,					& \text{otherwise}.
%\end{cases} \label{eq:reyn:stat}
%\end{align}
%\end{subequations}
and, moreover, if at some time $t \in [0,1]$, $R_0(t) =0$, then
\begin{equation*}
\rho_1(t) - \rho_0(t) = u_1(t) - u_0(t) = R_1(t) = 0.  
\end{equation*}
\end{proposition}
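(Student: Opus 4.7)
The plan is to implement the convex integration step sketched in Section~\ref{s:comments:proof}. I define the new density and vector field as perturbations
\[
\rho_1 := \rho_0 + \eta\,\vartheta, \qquad u_1 := u_0 + \eta^{-1} w,
\]
where $\vartheta$ and $w$ are built out of Mikado densities $\Theta_{\mu}$ and Mikado fields $W_{\mu}$ respectively. First I fix a large integer $N$ and partition $[0,1]$ into $N$ intervals $I_i$ with a smooth partition of unity $\{\alpha_i\}$; at each midpoint $t_i$ I solve backwards the transport equation $\partial_t \Phi_i + (u_0 \cdot \nabla)\Phi_i = 0$, $\Phi_i(t_i,x)=x$ to obtain inverse flow maps $\Phi_i$, which are measure-preserving diffeomorphisms of $\T^d$ whose derivatives are controlled uniformly in $N$ on the time scale $1/N$. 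I then choose two oscillation parameters $\lambda' \ll \lambda''$ and two concentration parameters $\mu' \ll \mu''$ and set
\[
\vartheta(t,x) := F(R_0(t,x))\!\left[\sum_{i\text{ odd}} \alpha_i(t)\,\Theta_{\mu'}(\lambda' \Phi_i(t,x)) + \sum_{i\text{ even}} \alpha_i(t)\,\Theta_{\mu''}(\lambda'' \Phi_i(t,x))\right],
\]
and analogously for $w$ with $G(R_0)$ and $W_{\mu}$. The nonlinearities $F,G$ are taken from the Mikado proposition so that $F(R)\,G(R)\,(\Theta\cdot W)$ reproduces $R$ up to a mean-zero oscillatory error.

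From $\partial_t \rho_0 + \div(\rho_0 u_0) = -\div R_0$ and a direct expansion I get
\[
\partial_t\rho_1 + \div(\rho_1 u_1) + \div R_0
= \underbrace{\div(\vartheta w)+\div R_0}_{\text{quadratic}}
+ \underbrace{\eta\bigl[\partial_t\vartheta + \div(\vartheta u_0)\bigr]}_{\text{transport}}
+ \underbrace{\eta^{-1}\div(\rho_0 w)}_{\text{Nash}}.
\]
I then define $R_1 = R_1^{\text{quad}} + R_1^{\text{trans}} + R_1^{\text{Nash}}$ by inverting the divergence on each piece. For the Nash term I simply take $R_1^{\text{Nash}} := \eta^{-1}\rho_0 w$, whose $L^1$ norm is $\lesssim \eta^{-1}\|\rho_0\|_{C^0}\|w\|_{L^1}\lesssim \eta^{-1}(\mu)^{-\gamma_2}$ with $\gamma_2=d-1>0$. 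For the transport term I exploit that $(\partial_t + u_0\cdot\nabla)[\Theta_{\mu}(\lambda\Phi_i)] = 0$ since $\Phi_i$ is advected by $u_0$, so only the slow factor $F(R_0)$ carries a derivative and I can apply the improved antidivergence of Lemma~\ref{p:antidiv:transport} to gain a factor $\lambda^{-1}$. For the quadratic term, the diagonal pairings $(i,i)$ cancel $R_0$ up to a mean-zero fast-oscillating remainder to which Lemma~\ref{p:antidiv:transport} again gives smallness $\sim \lambda^{-1}$.

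The main obstacle is the \emph{off-diagonal} interactions in $\vartheta w$, i.e.\ terms involving $\Theta_{\mu'}(\lambda'\Phi_i)\,W_{\mu''}(\lambda''\Phi_{i\pm1})$, where two different inverse flows $\Phi_i,\Phi_{i\pm1}$ appear; here the Mikado cancellation mechanism is unavailable and one cannot simply invert the divergence cheaply. To handle them I use the two-scale choice $\lambda'\ll\lambda''$ and $\mu'\ll\mu''$: the improved H\"older inequality \eqref{eq:improved:holder:diffeo} applied to the pair (slow $\Theta_{\mu'}(\lambda'\Phi_i)$, fast $W_{\mu''}(\lambda''\Phi_{i\pm1})$)—and symmetrically to its transpose—produces a gain of $(\lambda'')^{-1/p}$ together with the concentration factor $\|W_{\mu''}\|_{L^1}\lesssim (\mu'')^{-\gamma_2}$, so the product is small in $L^1$ and its antidivergence is small as well.

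The parameter hierarchy is then chosen in the standard order: with $\eta,\delta$ given, I first fix $N$ large so the flow derivatives $\|D\Phi_i\|_{C^k}$ stay bounded by a universal constant, then $\mu',\mu''$ large so that the Nash term, the $L^1$ size of $w$, and the interaction error sit well below $\delta$, and finally $\lambda',\lambda''$ large so that the transport and quadratic remainders are $\ll \delta$. Estimates \eqref{eq:dist:rho:stat} and \eqref{eq:dist:u:1:stat} follow from $\vartheta=F(R_0)\cdot(\text{unit-}L^1\text{ Mikado})$ and $w=G(R_0)\cdot(\text{bounded Mikado})$ applied with $\|F(R_0)\|_{L^1}\lesssim \|R_0\|_{L^1}$ and $\|G(R_0)\|_{C^0}\lesssim M$; the $W^{1,p}$ bound \eqref{eq:dist:u:2:stat} and the defect bound \eqref{eq:reyn:stat} follow from the oscillation/concentration estimates \eqref{eq:sk:mikado:3} once $\mu\gg \lambda$ and $\lambda$ is large. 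The vanishing property at times where $R_0=0$ is automatic because both $\vartheta$ and $w$ carry the factor $F(R_0)$ or $G(R_0)$ with $F(0)=G(0)=0$, and the Nash piece vanishes with $w$.
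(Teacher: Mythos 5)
Your approach follows the paper's scheme closely at a high level, but there is a genuine gap concerning the incompressibility constraint. You define $w(t,x)=G(R_0(t,x))\sum_i\alpha_i(t)W_{\mu}(\lambda\Phi_i(t,x))$, but such a $w$ is \emph{not} divergence-free: composing the Mikado field $W_\mu$ with the diffeomorphism $\Phi_i$ destroys the property $\div W_\mu=0$, and on top of that the nonconstant prefactor $G(R_0)$ contributes a term $\nabla G(R_0)\cdot W_\mu(\lambda\Phi_i)$. Hence $u_1=u_0+\eta^{-1}w$ does not satisfy $\div u_1=0$, so $(\rho_1,u_1,R_1)$ is not a solution of the continuity-defect equation \eqref{eq:cont:reyn} as required. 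The paper avoids both problems by taking $w$ \emph{independent} of $R_0$ (no $G(R_0)$ factor at all) and inserting the matrix $(D\Phi_i)^{-1}$ in front of $W_\mu^j(\lambda\Phi_i)$; Lemma~\ref{l:divergence:diffeo} then gives $\div\bigl[(D\Phi_i)^{-1}W_\mu^j(\lambda\Phi_i)\bigr]=0$ exactly (see \eqref{eq:perturbation}). Since $(D\Phi_i)^{-1}\approx\mathrm{Id}$ for small $\tau$, the Mikado cancellation $\fint\Theta^j_\mu W^j_\mu=e_j$ still reproduces $R_0$, at the price of the additional error $R^{\rm flow}$ controlled via \eqref{eq:dphi:id}.

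This forces changes in your last paragraph as well. Because $w$ no longer carries a factor vanishing with $R_0$, the vanishing of $u_1(t)-u_0(t)$ at times where $R_0(t)=0$ cannot come from ``$G(0)=0$''; the paper instead multiplies both perturbations by a temporal cutoff $\psi(t)$ defined in \eqref{eq:psi}, which vanishes on a neighborhood of any time where $R_0$ is small and produces the further error $R^\psi$ with $\|R^\psi(t)\|_{L^1}\leq\delta/4$. You also omit the spatial constant correction $\vartheta_c(t)=-\fint_{\T^d}\vartheta(t)\,dx$: without it, $\fint\vartheta$ need not be constant in time, so $\partial_t\vartheta+\div(\vartheta u_0)$ would not have zero mean and the improved antidivergence operator of Lemma~\ref{p:antidiv:transport} would not be applicable to the transport term. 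Finally, your proposed parameter order (fix $\mu',\mu''$ first, then $\lambda',\lambda''$) cannot work: the interaction estimate of Lemma~\ref{l:rinteraction} contains the factors $\lambda'\mu'/\lambda''$ and $\lambda'(\mu')^d/\bigl(\lambda''(\mu'')^{d-1}\bigr)$, so the four parameters must be interleaved as $1\ll\lambda'\ll\mu'\ll\lambda''\ll\mu''$; the paper achieves this by taking a single large $\lambda$ and powers $(\lambda,\lambda^\alpha,\lambda^\beta,\lambda^\gamma)$ with $1<\alpha<\beta<\gamma$ chosen to satisfy \eqref{eq:cond:1}--\eqref{eq:cond:2}.
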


\begin{proof}[Proof of Theorem \ref{thm:main} assuming Proposition \ref{p:main}]

For $\rho_0, u_0$ in the statement of Theorem \ref{thm:main}, define 
\begin{equation*}
R_0(t) := - \nabla \Delta^{-1} \Big(\partial_t \rho_0(t) + \div (\rho_0(t) u_0(t)) \Big).
\end{equation*}
By \eqref{eq:condition:on:rho:0:u:0}, $R_0$ is well defined, it is smooth and $(\rho_0, u_0, R_0)$ solve the continuity-defect equation. 

Let $(p_q)_{q \in \N}$ be a fixed increasing sequence of real numbers such that $p_q \to d-1$ as $q \to \infty$. Let also $(\eta_q)_{q \in \N}$, $(\delta_q)_{q \in \N}$ be two sequence of positive real numbers, which will be fixed later. Starting from $(\rho_0, u_0, R_0)$, we can recursively apply Proposition \ref{p:main} to obtain a sequence $(\rho_q, u_q, R_q)_{q \in \N}$ of smooth solutions to the continuity-defect equation such that
\begin{subequations}
\begin{align}
\|\rho_{q+1}(t) - \rho_q(t)\|_{L^1(\T^d)} & \leq M \eta_q \|R_q(t)\|_{L^1(\T^d)}, \label{eq:dist:rho:stat:ind} \\
\|u_{q+1}(t) - u_q(t)\|_{C^0(\T^d)} & \leq M \eta_q^{-1} \label{eq:dist:u:1:stat:ind} \\
\|u_{q+1}(t) - u_q(t)\|_{W^{1,p_q}(\T^d)} & \leq \delta_q,                   \label{eq:dist:u:2:stat:ind} \\
\|R_{q+1}(t)\|_{L^1(\T^d)} 						& \leq \delta_q,					 \label{eq:reyn:stat:ind}
\end{align}
\end{subequations}
%\begin{align*}
%\|\rho_{q+1}(t) - \rho_q(t)\|_{L^1(\T^d)} & \leq M \eta_q \|R_q(t)\|_{L^1(\T^d)},  \\
%\|u_{q+1}(t) - u_q(t)\|_{C^0(\T^d)} & \leq M \eta_q^{-1} \\
%\|u_{q+1}(t) - u_q(t)\|_{W^{1,p_q}(\T^d)} & \leq \delta_q,                   \\
%\|R_{q+1}(t)\|_{L^1(\T^d)} 						& \leq \delta_q,					 
%\end{align*}
for all times $t \in [0,1]$ and
\begin{equation*}
\rho_{q+1}(t) = \rho_q(t), \quad u_{q+1}(t) = u_q(t), \quad R_{q+1}(t) = 0, 
\end{equation*}
for all times $t$ such that $R_q(t) = 0$. Therefore, by induction, we get from \eqref{eq:dist:rho:stat:ind} and \eqref{eq:reyn:stat:ind} that for all $t \in [0,1]$ and all $q \in \N$, 
\begin{equation}
\|\rho_{q+1}(t) - \rho_q(t)\|_{L^1(\T^d)} \leq M \eta_q \delta_{q-1}, \label{eq:dist:rho:stat:ind:2}
\end{equation}
where we set $\delta_{-1} := \max_{t \in [0,1]} \|R_0(t)\|_{L^1}$ and, moreover, 
\begin{equation}
\label{eq:E}
\rho_{q+1}(t) = \rho_q(t), \quad u_{q+1}(t) = u_q(t) \quad \text{ for all } t \in E,
\end{equation}
where $E$  was defined in \eqref{eq:def:E}. We now choose $(\delta_q)_{q \in \N}$ so that
\begin{equation*}
\sum_{q=-1}^{+\infty} \delta_q < \infty, \qquad \sum_{q=-1}^{+\infty} \delta_q^{1/2} < \infty
\end{equation*}
and 
\begin{equation*}
\eta_q := \sigma \delta_{q-1}^{-1/2}
\end{equation*}
for $q \in \N$, where $\sigma > 0$ is a positive number,  to be defined later. From \eqref{eq:dist:rho:stat:ind:2} we get, for all $t \in [0,1]$, 
\begin{equation}
\label{eq:dist:rho:rho:0}
\sum_{q = 0}^{+\infty} \|\rho_{q+1}(t) - \rho_q(t)\|_{L^1} \leq M \sum_{q=0}^{+\infty}  \eta_q \delta_{q-1} = M \sigma \sum_{q=0}^{+\infty} \delta_{q-1}^{1/2} < \infty
\end{equation}
and thus there exists $\rho \in C([0,1]; L^1(\T^d))$ so that $\rho_q \to \rho$ in $C([0,1]; L^1(\T^d))$. Similarly, using \eqref{eq:dist:u:1:stat:ind}, for all $t \in [0,1]$, 
\begin{equation}
\label{eq:dist:u:u:0}
\sum_{q=0}^{+\infty} \|u_{q+1}(t) - u_q(t)\|_{C^0} \leq M \sum_{q=0}^{+\infty} \eta_q^{-1} = M \sigma^{-1} \sum_{q=0}^{+\infty} \delta_{q-1}^{1/2} < \infty
\end{equation}
and thus there exists $u \in C([0,1] \times \T^d; \R^d)$ so that $u_q \to u$ uniformly. It follows now from \eqref{eq:reyn:stat:ind} that $\rho, u$ solve \eqref{eq:continuity}. 

To prove that $u \in \bigcap_{1 \leq p < d-1} C_t W_x^{1, p}$, fix $p \in [1, d-1)$. There is $q^*$ so that $p_{q} > p$ for every $q > q^*$. We now have, for all $t \in [0,1]$,
\begin{equation*}
\begin{split}
\sum_{q=0}^{+\infty} \|u_{q+1}(t) - u_q(t)\|_{W^{1, p}} 
& = \sum_{q=0}^{q^*} \|u_{q+1}(t) - u_q(t)\|_{W^{1, p}} + \sum_{q=q^*+1}^{+\infty} \|u_{q+1}(t) - u_q(t)\|_{W^{1, p}} \\
\text{(since $p < p_q$ for $q > q^*$)}
& \leq \sum_{q=0}^{q^*} \|u_{q+1}(t) - u_q(t)\|_{W^{1, p}} + \sum_{q=q^*+1}^{+\infty} \|u_{q+1}(t) - u_q(t)\|_{W^{1, p_q}} \\
\text{(by \eqref{eq:dist:u:2:stat:ind})}
& \leq \sum_{q=0}^{q^*} \|u_{q+1}(t) - u_q(t)\|_{W^{1, p}} + \sum_{q=q^*+1}^{+\infty} \delta_q \\
& < \infty,
\end{split}
\end{equation*}
thus proving that $u \in C([0,1]; W^{1, p}(\T^d))$. This concludes the proof of parts (a), (b) in the statement of Theorem \ref{thm:main}.

It follows from \eqref{eq:E} that $\rho(t) = \rho_0(t)$ and $u(t) = u_0(t)$, whenever $t \in E$, and thus part (c) is also proven. To prove (d), we observe that, from \eqref{eq:dist:rho:rho:0}, for all $t \in [0,1]$,
\begin{equation*}
\|\rho(t) - \rho_0(t)\|_{L^1} \leq \sum_{q = 0}^{+\infty} \|\rho_{q+1}(t) - \rho_q(t)\|_{L^1} \leq M \sigma \sum_{q=0}^\infty \delta_{q-1}^{1/2} 
\end{equation*}
and thus (d) follows choosing
\begin{equation*}
\sigma := \frac{\e}{M \sum_{q=0}^{+\infty} \delta_{q-1}^{1/2}}.
\end{equation*}
Alternatively, to achieve (d'), we observe that, from \eqref{eq:dist:u:u:0}, for all $t \in [0,1]$,
\begin{equation*}
\|u - u_0\|_{C^0} \leq M \sigma^{-1} \sum_{q=0}^{+\infty} \delta_{q-1}^{1/2}
\end{equation*}
and thus (d)' follows choosing
\begin{equation*}
\sigma := \frac{M \sum_{q=0}^{+\infty} \delta_{q-1}^{1/2}}{\e}.  \ifjems \qedhere \fi
\end{equation*}

\end{proof}

%%\newpage

%%%\newpage
\section{The perturbations}
\label{s:perturbations}

In this and the next two sections we prove Proposition \ref{p:main}.  In particular in this section we fix the constant $M$ in the statement of the proposition, we define the functions $\rho_1$ and $u_1$ and we estimate them. In Section \ref{s:reynolds} we define $R_1$ and we estimate it. In Section \ref{s:proof:prop} we conclude the proof of Proposition \ref{p:main}.

\subsection{Mikado fields and Mikado densities}

We recall the following proposition from \cite{ModSze:2017}. 
%The first step towards the definition of $\rho_1, u_1$ is the construction of \emph{Mikado fields} and \emph{Mikado densities}.
%
%We start by fixing a function $\Phi \in C^\infty_c(\R^{d-1})$ such that 
%\begin{equation*}
%\supp \Phi \subseteq (0,1)^{d-1}, \quad \int_{\R^{d-1}} \Phi = 0, \quad  \int_{\R^{d-1}} \Phi^2 = 1.
%\end{equation*}
%Let $\Phi_\mu(x) := \Phi(\mu x)$ for $\mu > 0$. Let $a \in \R$. For every $k \in \N$, it holds
%\begin{equation}
%\label{eq:rescale:phi}
%\begin{split}
%\|D^k (\mu^a \Phi_\mu)\|_{L^r(\R^{d-1})}  	& = \mu^{a+k- (d-1)/r} \|D^k\Phi\|_{L^r(\R^{d-1})}, \\
%\end{split}
%\end{equation}

%We start by fixing a function $\Phi \in C^\infty_c(\R^{d-1})$ such that 
%\begin{equation*}
%\supp \Phi \subseteq (0,1)^{d-1}, \quad \int_{\R^{d-1}} \Phi = 0, \quad  \int_{\R^{d-1}} \Phi^2 = 1.
%\end{equation*}

\begin{proposition}
\label{p:mikado}
Let $a, b \in \R$ with 
\begin{equation}
\label{eq:exponent}
a+b = d-1.
\end{equation}
For every $\mu>2d$ and $j=1, \dots, d$ there exist a \emph{Mikado density} $\Theta_{\mu}^{j} : \T^d \to \R$ and a \emph{Mikado field} $W_\mu^j :\T^d \to \R^d$ with the following properties.
\begin{enumerate}[(a)]

\item It holds
\begin{equation}
\label{eq:mikado:eq}
\begin{cases}
\div W_\mu^j & = 0, \\
\div (\Theta_\mu^j W_\mu^j) & = 0, \\
\fint_{\T^d} \Theta_{\mu}^j = \fint_{\T^d} W_\mu^j & = 0, \\
\fint_{\T^d} \Theta_\mu^j W_\mu^j & = e_j, \\
%\int_{\T^d} \big|\Theta_\mu^j W_\mu^j\big| & = 1.
\end{cases}
\end{equation}
%\begin{align}
%\label{eq:mikado:eq}
%& \begin{cases}
%\div W_\mu^j & = 0, \\
%\div (\Theta_\mu^j W_\mu^j) & = 0,
%\end{cases} \\
%\label{eq:mikado:int}
%& \begin{cases}
%\fint_{\T^d} \Theta_{\mu}^j = \fint_{\T^d} W_\mu^j & = 0, \\
%\int_{\T^d} \Theta_\mu^j W_\mu^j & = e_j,
%\end{cases}
%\end{align}
where $\{e_j\}_{j=1,\dots,d}$ is the standard basis in $\R^d$.

\item For every $k \in \N$ and  $r \in [1,\infty]$
%\begin{equation}
%\label{eq:mikado:est}
%\begin{split}
%\|\Theta_\mu^j\|_{L^r(\T^d)}, 				& \leq \frac{M}{2d} \mu^{(d-1) (1/p - 1/r)}, \\
%\|W_\mu^j\|_{L^{r'}(\T^d)}					& \leq \frac{M}{2} \mu^{(d-1) (1/p' - 1/r)}, \\
%\|\nabla W_\mu^j\|_{L^r} 					& \leq \frac{M}{2} \mu^{1 + (d-1)(1/p' - 1/r)}. \\
%\end{split}
%\end{equation}
\begin{equation}
\label{eq:mikado:est}
\begin{split}
\|D^k \Theta_\mu^j\|_{L^r(\T^d)} 			& \leq M_{k}\, \mu^{a + k - (d-1) /r}, \\
\|D^k W_\mu^j\|_{L^{r}(\T^d)}				& \leq M_{k}\, \mu^{b + k - (d-1)/r}, \\
\end{split}
\end{equation}
where $M_{k}$ is a constant which depends only on $k$, but not on $r$ and $\mu$. 

\item For $j \neq k$, $\supp \Theta_\mu^j = \supp W_\mu^j$ and $\supp \Theta_\mu^j \cap \supp W_{\mu}^k = \emptyset$.

\end{enumerate}
\end{proposition}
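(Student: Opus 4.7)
The construction is the one used in the authors' earlier paper \cite{ModSze:2017}: each $\Theta_\mu^j,W_\mu^j$ is supported on a single thin tube in $\T^d$ parallel to the coordinate direction $e_j$, carrying a fixed transverse profile concentrated on the scale $1/\mu$.

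First I would fix a smooth profile $\psi\in C^\infty_c(\R^{d-1})$ with $\supp \psi \subset B_{1/(4d)}(0)$, $\int \psi = 0$, and $\int \psi^2 = 1$ (such $\psi$ is obtained by taking a bump function minus a translated copy of itself with the same integral, then $L^2$-normalizing). Next, for each $j\in\{1,\dots,d\}$ I would pick a base point $p_j\in\T^d$ so that the closed lines $L_j:=\{p_j+te_j:t\in\R\}/\Z^d$ are pairwise separated by at least $1/(2d)$ in $\T^d$. Since for $j\neq k$ the minimum distance between $L_j$ and $L_k$ in $\T^d$ is the $\T^{d-2}$-distance between the projections of $p_j$ and $p_k$ onto the coordinates other than $j,k$, this is easily arranged for $d\geq 3$ by staggering the $p_j$ in the directions transverse to $e_j$. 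Writing $\bar x^j\in\T^{d-1}$ for the projection of $x$ onto the coordinates other than $x_j$, I would define
\begin{equation*}
\Theta_\mu^j(x) \;:=\; \mu^{a}\,\psi\bigl(\mu(\bar x^j-\bar p_j^j)\bigr), \qquad W_\mu^j(x) \;:=\; \mu^{b}\,\psi\bigl(\mu(\bar x^j-\bar p_j^j)\bigr)\,e_j,
\end{equation*}
each extended by zero outside the unique translate of its support in a fundamental cell, and then periodized. For $\mu>2d$ the transverse radius of this support is $1/(4d\mu)<1/(8d^2)$, so the extension to $\T^d$ is unambiguous.

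Property (a) would follow by direct inspection. Both $\Theta_\mu^j$ and $W_\mu^j$ are independent of $x_j$, so $\div W_\mu^j = \partial_{x_j}(W_\mu^j)_j = 0$ and $\div(\Theta_\mu^j W_\mu^j) = \partial_{x_j}(\Theta_\mu^j(W_\mu^j)_j) = 0$; the vanishing of the means of $\Theta_\mu^j$ and $W_\mu^j$ comes from $\int\psi=0$; and $\fint_{\T^d}\Theta_\mu^j W_\mu^j$ has only a nonzero $j$-th component, which after the change of variables $z=\mu(\bar x^j-\bar p_j^j)$ equals $\mu^{a+b-(d-1)}\int_{\R^{d-1}}\psi^2=1$ by the hypothesis $a+b=d-1$ and the normalization $\int\psi^2=1$. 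The $L^r$-bounds in (b) come from the same change of variables: each spatial derivative produces a factor $\mu$ by the chain rule, while the rescaling contributes the volume factor $\mu^{-(d-1)/r}$, giving $\|D^k\Theta_\mu^j\|_{L^r(\T^d)}\leq \mu^{a+k-(d-1)/r}\|D^k\psi\|_{L^r(\R^{d-1})}$ and analogously for $W_\mu^j$; the constant $M_k$ can be chosen independent of $r\in[1,\infty]$ because $\|D^k\psi\|_{L^r}$ is bounded by $\|D^k\psi\|_{L^\infty}|\supp\psi|^{1/r}$, uniformly in $r$ on the fixed compact support of $\psi$.

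The support statement (c) is tautological from the construction: $\supp\Theta_\mu^j=\supp W_\mu^j$ is the tube of transverse radius $1/(4d\mu)$ around $L_j$; and whenever $j\neq k$ these tubes are disjoint, because the lines $L_j,L_k$ are separated by at least $1/(2d)$ while the tube radii are bounded by $1/(8d^2)\ll 1/(4d)$. The only non-routine ingredient in the plan is the combinatorial placement of the base points $p_j$ needed for pairwise separation of the $L_j$, but this is an elementary geometric exercise that uses only $d\geq 3$ and the freedom to translate each line in the $d-1$ transverse directions.
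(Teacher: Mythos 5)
Your construction is correct and is essentially the one in \cite{ModSze:2017}, to which the paper delegates the proof of this proposition: concentrated, pairwise disjoint tubes along the coordinate directions in $\T^d$, with each Mikado density and field carrying a rescaled transverse profile, and with the separation of the lines $L_j$ arranged exactly as you describe by staggering base points in the transverse directions (possible since $d\geq 3$). The only cosmetic difference from the cited construction is that you use a single profile $\psi$ with $\int\psi=0$ and $\int\psi^2=1$ for both $\Theta_\mu^j$ and $W_\mu^j$, rather than two profiles with a cross-normalization $\int\phi\,\tilde\phi=1$; this is an equally valid choice and all of (a), (b), (c) follow from it by the computations you give.
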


We now define the constant $M$ in the statement of Proposition \ref{p:main} as 
\begin{equation}
\label{eq:M}
M := 4d \max \Big\{ M_{0}, \, M_{0}^2, \, M_0 + M_1 \Big\}.
\end{equation}
and we choose
\begin{equation}
\label{eq:choice:ab}
a := d-1, \qquad b := 0.
\end{equation} 
in Proposition \ref{p:mikado}. In this way for each direction $j =1,\dots, d$, we obtain a family of Mikado densities $\{\Theta_\mu^j\}_{\mu > 2d}$ and fields $\{W_\mu^j\}_{\mu>2d}$, obeying the following estimates:
\begin{equation}
\label{eq:mikado:est:1}
\begin{split}
\sum_{j=1}^d \|\Theta_\mu^j\|_{L^1(\T^d)}, \  \sum_{j=1}^d \|W_\mu^j\|_{L^{\infty}(\T^d)}, \ \sum_{j=1}^d	\|\Theta_\mu^j W_\mu^j\|_{L^1(\T^d)}		& \leq \frac{M}{4}, \\
\end{split}
\end{equation}
and
\begin{equation}
\label{eq:mikado:est:2}
\begin{split}
\|W_\mu^j\|_{L^{1}(\T^d)} \leq M \mu^{-(d-1)}, \qquad \|W_\mu^j\|_{W^{1, p}} 	\leq M \mu^{1-(d-1)/p}.
\end{split}
\end{equation}
and
\begin{equation}
\label{eq:mikado:est:3}
\|\Theta_\mu^j\|_{C^1} \leq M \mu^d, \qquad \|W_\mu^j\|_{C^1} \leq M \mu.
\end{equation}

\subsection{Definition of the perturbations}
\label{ss:def:pert}

We are now in a position to define $\rho_1$, $u_1$. The constant $M$ has already been fixed in \eqref{eq:M}. Let thus $p \in [1, d-1)$, $\eta, \delta>0$ and $(\rho_0, u_0, R_0)$ be a smooth solution to the continuity-defect equation \eqref{eq:cont:reyn}. 

Let
\begin{equation*}
\begin{aligned}
& \tau \in 1/\N^* && \text{ ``time scale''}, \\
& \lambda', \lambda'' \in \N && \text{ ``oscillation'' } \\ %\lambda'' \gg \lambda' \\
& \mu', \mu'' > 2d && \text{ ``concentration'' } %\mu'' \gg \mu', 
\end{aligned}
\end{equation*}
be parameters, which will be fixed later. Set
\begin{equation*}
N := 1/\tau \in \N^*. 
\end{equation*}
For every $i=1,2,\dots, N$, let $I_i := [i\tau, (i+1)\tau]$ and let $t_i := (i+1/2)\tau$ be the midpoint of $I_i$. Consider a partition of unity $\{\alpha_i\}_{i=1,\dots, N}$ subordinate to the family of intervals $\{I_i\}_{i=1,\dots, N}$. More precisely, for every $i=1,\dots, N$, $\alpha_i \in C^\infty([0,1])$ and 
\begin{itemize}
\item $\supp \alpha_i \in [(i-1/3)\tau, (i+1+1/3)\tau]$;
\item $\alpha_i (t) \in [0,1]$ for every $t \in [0,1]$;
\item $\sum_{i=1}^N \alpha_i^2(t) = 1$ for every $t \in [0,1]$.
\end{itemize}
Notice that for every time $t \in [0,1]$ there is at most one odd index $i_1$ and one even index $i_2$ so that $\alpha_i(t) = 0$ for every $i \neq i_1, i_2$. For every $i=1,\dots, N$, let $\Phi_i : [0,1] \times \T^d \to \T^d$ be the solution to
\begin{equation}
\label{eq:inverse:flow}
\begin{cases}
\partial_t \Phi_i + (u_0 \cdot \nabla) \Phi_i & = 0, \\
\Phi_i(t_i,x) & = x,
\end{cases}
\end{equation}
i.e. the inverse flow map associated to the vector field $u_0$, starting at time $t_i$. Notice that, for fixed $t$, $\Phi_i(t) : \T^d \to \T^d$ is a measure-preserving diffeomorphism.

We denote by $R_{0,j}$ the components of $R_0$, i.e.
\begin{equation*}
R_0(t,x) := \sum_{j=1}^d R_{0,j}(t,x) e_j.
\end{equation*}
Let also $\psi: [0,1] \to \R$ be a smooth function such that $\psi(t) \in [0,1]$ for every $t \in [0,1]$ and 
\begin{equation}
\label{eq:psi}
\psi(t) =
\begin{cases}
0, & \text{if } \|R_0(t)\|_{L^1(\T^d)} \leq \delta/8, \\
1, & \text{if } \|R_0(t)\|_{L^1(\T^d)} \geq \delta/4. 
\end{cases}
\end{equation}

\bigskip

We set
\begin{equation*}
\rho_1 := \rho_0 + \vartheta + \vartheta_c , \qquad u_1 :=  u_0 + w,
\end{equation*}
where $\vartheta, \vartheta_c, w$ are defined as follows. First of all, let $\Theta_\mu^j$, $W_\mu^j$, $j=1,\dots, d$, be the family (depending on $\mu$) of Mikado densities and fields provided by Proposition \ref{p:mikado}, with $a,b$ chosen as in \eqref{eq:choice:ab}. We set
\begin{equation}
\label{eq:perturbation}
\begin{split}
\vartheta (t,x) 	& := \eta \, \psi(t) \Bigg\{ \sum_{\substack{i=1 \\ i \text{ odd}}}^N \alpha_i(t) \sum_{j=1}^d R_{0,j}(t,x) \Theta_{\mu'}^j\big(\lambda' \Phi_i(t,x)\big) 
\\
& \qquad \qquad 
+  \sum_{\substack{i=1 \\ i \text{ even}}}^N \alpha_i(t) \sum_{j=1}^d R_{0,j}(t,x) \Theta_{\mu''}^j\big(\lambda'' \Phi_i(t,x)\big) \Bigg\}, \\
w(t,x) 				& := \frac{\psi(t)}{\eta} \Bigg\{ \sum_{\substack{i=1 \\ i \text{ odd}}}^N \alpha_i(t) \sum_{j=1}^d (D\Phi_i(t,x))^{-1}  W_{\mu'}^j \big(\lambda' \Phi_i(t,x)\big) \\
& \qquad \qquad + \sum_{\substack{i=1 \\ i \text{ even}}}^N \alpha_i(t) \sum_{j=1}^d (D\Phi_i(t,x))^{-1}  W_{\mu''}^j \big(\lambda'' \Phi_i(t,x)\big) \Bigg\}, \\
\vartheta_c(t)		& := - \fint_{\T^d} \vartheta(t,x) dx.
\end{split}
\end{equation}
The factor $(D \Phi_i(t,x))^{-1}$ is the inverse matrix of $D \Phi_i(t,x)$. 
%We will also use the shorter notation
%\begin{equation*}
%\begin{split}
%\vartheta(t) 	& = \eta \, \Bigg\{ \sum_{\substack{i=1 \\ i \text{ odd}}}^N \alpha_i(t) \sum_{j=1}^d R_{0,j}(t) \Theta_{\mu'}^j \big(\lambda' \Phi_i(t) \big) +  \sum_{\substack{i=1 \\ i \text{ even}}}^N \alpha_i(t) \sum_{j=1}^d R_{0,j}(t) \Theta_{\mu''}^j \big(\lambda'' \Phi_i(t) \big)\Bigg\}, \\
%w(t)			& = \frac{1}{\eta} \Bigg\{ \sum_{\substack{i=1 \\ i \text{ even}}}^N \alpha_i(t) \sum_{j=1}^d (D \Phi_i(t))^{-1} W_{\mu'}^j \big( \lambda' \Phi_i(t) \big)
%+  \sum_{\substack{i=1 \\ i \text{ even}}}^N \alpha_i(t) \sum_{j=1}^d (D \Phi_i(t))^{-1} W_{\mu''}^j \big( \lambda'' \Phi_i(t) \big) \Bigg\}
%\end{split}
%\end{equation*}
Observe that for fixed $t_0 \in [0,1]$, there are at most one odd index $i_1$ and one even index $i_2$ so that $\alpha_i(t) =0$ if $i \neq i_1, i_2$ and $t$ is close enough to $t_0$ (say, $|t - t_0| \leq 2\tau/3$). Therefore for such times $t$ we can write
\begin{equation}
\label{eq:only:two:cutoff}
\begin{split}
\vartheta(t) & = \eta \, \psi(t) \Bigg\{ \alpha_{i_1}(t) \sum_{j=1}^d R_{0,j}(t) \Theta_{\mu'}^j\big(\lambda' \Phi_{i_1}(t) \big) 
\\
& \qquad \qquad \qquad 
+ \alpha_{i_2}(t) \sum_{j=1}^d R_{0,j}(t) \Theta_{\mu''}^j\big(\lambda'' \Phi_{i_2}(t) \big) \Bigg\} \\
w(t) & = \frac{\psi(t)}{\eta} \Bigg\{ \alpha_{i_1}(t) \sum_{j=1}^d (D \Phi_{i_1}(t))^{-1} W_{\mu'}^j \big( \lambda' \Phi_{i_1}(t) \big) 
\\
& \qquad \qquad \qquad 
+ \alpha_{i_2}(t)  \sum_{j=1}^d (D \Phi_{i_2}(t))^{-1} W_{\mu''}^j \big( \lambda'' \Phi_{i_2}(t) \big) \Bigg\},
\end{split}
\end{equation}
Notice that $\vartheta_0$ and $w$ are smooth functions. Notice also that $\vartheta + \vartheta_c$ has zero mean value in $\T^d$ at each time $t$. Finally observe that $w$ is a sum of terms of the form $(D \Phi)^{-1} (G \circ \Phi)$, with
\begin{equation*}
\Phi = \Phi_i(t), \qquad G = (W_{\mu'}^j)_{\lambda'} \text{ or } G = (W_{\mu''}^j)_{\lambda''}. 
\end{equation*}
Since $\div (W_{\mu})_\lambda = 0$ for every $\mu, \lambda$ (see Proposition \ref{p:mikado}), we get from Lemma \ref{l:divergence:diffeo} that
 each one of these terms is divergence free and thus $\div w = 0$. Therefore 
%
%To define $w_c$, notice first that
%\begin{equation*}
%\begin{split}
%\div w(t) 				& := \frac{\psi(t)}{\eta} \Bigg\{ \sum_{\substack{i=1 \\ i \text{ odd}}}^N \alpha_i(t) \sum_{j=1}^d \div \bigg( (D\Phi_i(t))^{-1}  W_{\mu'}^j \big(\lambda' \Phi_i(t)\big) \bigg) \\
%& \qquad \qquad + \sum_{\substack{i=1 \\ i \text{ even}}}^N \alpha_i(t) \sum_{j=1}^d \div \bigg( (D\Phi_i(t))^{-1}  W_{\mu''}^j \big(\lambda'' \Phi_i(t)\big) \bigg) \Bigg\} \\
%\end{split}
%\end{equation*}
%is sum of terms of the form $\div (f (D \Phi)^{-1} G(\lambda \Phi))$, with
%\begin{equation*}
%f \equiv 1, \qquad \Phi = \Phi_i(t), \qquad \lambda = \lambda' \text{ or } \lambda = \lambda'', \qquad G = W_{\mu'}^j \text{ or } G = W_{\mu''}^j. 
%\end{equation*}
%Since $W_\mu^j$ is divergence free and has zero mean value (see Proposition \ref{p:mikado}), we can apply Proposition \ref{p:antidiv:quadr} and define
%\begin{equation}
%\label{eq:wc}
%\begin{split}
%w_c(t) & :=  -  \frac{\psi(t)}{\eta} \Bigg\{ \sum_{\substack{i=1 \\ i \text{ odd}}}^N \alpha_i(t) \sum_{j=1}^d \mathcal{R}_2 \bigg( \div \bigg( (D\Phi_i(t))^{-1}  W_{\mu'}^j \big(\lambda' \Phi_i(t)\big) \bigg) \bigg) \\
%& \qquad \qquad + \sum_{\substack{i=1 \\ i \text{ even}}}^N \alpha_i(t) \sum_{j=1}^d \mathcal{R}_2 \bigg( \div \bigg( (D\Phi_i(t))^{-1}  W_{\mu''}^j \big(\lambda'' \Phi_i(t)\big) \bigg) \bigg) \Bigg\}. \\
%\end{split}
%\end{equation}
%Then $\div (w + w_c) = 0$ and thus
\begin{equation*}
\div u_1 = \div u_ 0 + \div w = 0.
\end{equation*}
%Moreover, by Remark \ref{rmk:antidiv:time}, $w_c$ is smooth in $(t,x)$.

\begin{remark}
\label{rmk:zero:if:zero}
Observe that, thanks to the cutoff in time $\psi$, if $R_0(t) \equiv 0$, then $$\vartheta(t) = \vartheta_c(t) = w(t)  \equiv 0.$$ 
\end{remark}

%%\newpage
\subsection{Estimates on the perturbation}\label{ss:estimates-perturbation}

In this section we estimate $\vartheta$, $\vartheta_c$, $w$.
%
%
%, thus proving \eqref{eq:dist:rho:stat}, \eqref{eq:dist:u:1:stat}, \eqref{eq:dist:u:2:stat} in Proposition \ref{p:main}. Let $M$ be the constant defined in \eqref{eq:M}. 

\begin{lemma}[$L^1$-norm of $\vartheta$]
\label{l:lp:vartheta}
For every time $t \in [0,1]$,
\begin{equation*}
\begin{split}
\|\vartheta(t)\|_{L^1(\T^d)} & \leq \frac{M\eta}{2} \|R_0(t)\|_{L^1(\T^d)} + C\Big(M, \eta, \|R_0\|_{C^1}, \max_{i=1,\dots, N} \|D \Phi_i\|_{C^0} \Big) \bigg(\frac{1}{\lambda'} + \frac{1}{\lambda''} \bigg).
\end{split}
\end{equation*}
\end{lemma}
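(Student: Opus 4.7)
The plan is to bound $\|\vartheta(t)\|_{L^1(\T^d)}$ by reducing to a single ``odd'' term plus a single ``even'' term at each fixed time, and then applying the improved H\"older inequality from Lemma \ref{p:improved:holder} with $p=1$ to each summand. The crucial leverage is that the Mikado density $\Theta^j_\mu$ has $L^1$-norm of order $1$ (not large in $\mu$) while its $C^1$-norm is polynomial in $\mu$, but the $C^1$-cost is divided by $\lambda'$ (resp.\ $\lambda''$), which we will later choose much bigger than any power of $\mu$.

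First I would fix $t\in[0,1]$ and use the observation already recorded right after \eqref{eq:perturbation}: there are at most one odd index $i_1$ and one even index $i_2$ with $\alpha_{i_1}(t),\alpha_{i_2}(t)\neq 0$, so only these two terms contribute to $\vartheta(t)$ as in \eqref{eq:only:two:cutoff}. Using the triangle inequality together with $|\psi(t)|\le 1$, $\alpha_{i_k}(t)\le 1$, and $|R_{0,j}|\le |R_0|$ pointwise, I reduce matters to estimating
\[
\sum_{j=1}^d \bigl\| R_{0,j}(t)\,\Theta^j_{\mu'}\bigl(\lambda'\Phi_{i_1}(t,\cdot)\bigr)\bigr\|_{L^1}
\quad\text{and}\quad
\sum_{j=1}^d \bigl\| R_{0,j}(t)\,\Theta^j_{\mu''}\bigl(\lambda''\Phi_{i_2}(t,\cdot)\bigr)\bigr\|_{L^1}.
\]

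Next I apply \eqref{eq:improved:holder:diffeo} with $f=R_{0,j}(t)$, $g=\Theta^j_{\mu'}$ (or $\Theta^j_{\mu''}$), and $\Phi=\Phi_{i_1}(t)$ (resp.\ $\Phi_{i_2}(t)$), which is measure-preserving; this gives
\[
\bigl\| R_{0,j}(t)\,\Theta^j_{\mu'}(\lambda'\Phi_{i_1}(t))\bigr\|_{L^1}
\le \|R_{0,j}(t)\|_{L^1}\|\Theta^j_{\mu'}\|_{L^1}
+\tfrac{C}{\lambda'}\|R_{0,j}(t)\|_{C^1}\|D\Phi_{i_1}(t)\|_{C^0}^{d-1}\|\Theta^j_{\mu'}\|_{L^1}.
\]
Summing over $j$ and using \eqref{eq:mikado:est:1} (namely $\sum_{j=1}^d\|\Theta^j_\mu\|_{L^1}\le M/4$, independently of $\mu$) controls the leading term by $\tfrac{M}{4}\|R_0(t)\|_{L^1}$ and the remainder by $\tfrac{CM}{4\lambda'}\|R_0\|_{C^1}\max_i\|D\Phi_i\|_{C^0}^{d-1}$. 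The same holds with $\mu',\lambda',\Phi_{i_1}$ replaced by $\mu'',\lambda'',\Phi_{i_2}$. Multiplying through by $\eta$ and adding the odd and even contributions produces $2\cdot \tfrac{M}{4}=\tfrac{M}{2}$ in front of $\eta\|R_0(t)\|_{L^1}$, while the two error terms combine into a single constant depending on $M$, $\eta$, $\|R_0\|_{C^1}$, and $\max_i\|D\Phi_i\|_{C^0}$, multiplied by $\lambda'^{-1}+\lambda''^{-1}$, exactly matching the claimed bound.

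There is no real obstacle here since all the technical machinery is in place; the only point requiring a tiny amount of care is keeping the constants clean, specifically using \eqref{eq:mikado:est:1} (which controls the sum $\sum_j\|\Theta^j_\mu\|_{L^1}$ uniformly in $\mu$) rather than the individual estimates \eqref{eq:mikado:est:2}--\eqref{eq:mikado:est:3}, so that the leading coefficient comes out as $M\eta/2$ and not something worse. The $C^1$-norm of $\Theta^j_\mu$ from \eqref{eq:mikado:est:3} is implicitly absorbed into the error constant via the improved H\"older inequality, where it reappears only through $\|R_0\|_{C^1}$ (the slow function) rather than through $\|\Theta^j_\mu\|_{C^1}$.
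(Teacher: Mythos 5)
Your proposal is correct and follows essentially the same route as the paper: fix $t$, pass to the two-term form \eqref{eq:only:two:cutoff}, apply the improved H\"older inequality \eqref{eq:improved:holder:diffeo} with $p=1$ to each summand, and then invoke \eqref{eq:mikado:est:1} to get $\sum_j\|\Theta^j_\mu\|_{L^1}\le M/4$ uniformly in $\mu$, yielding the leading coefficient $M\eta/2$. One small clarification on your closing remark: the $C^1$-norm of $\Theta^j_\mu$ is not ``absorbed'' into the error constant --- it simply never appears, since the improved H\"older inequality charges the $C^1$-norm only to the slow factor $R_{0,j}$ and only the $L^1$-norm to the fast oscillating factor; that is precisely the point of the lemma.
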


\begin{proof}
Since we have to estimate $\|\vartheta(t)\|_{L^1(\T^d)}$ for every fixed time $t$, we can assume that $\vartheta(t)$ has the form \eqref{eq:only:two:cutoff}. In \eqref{eq:only:two:cutoff} each term in the summation over $j$ has the form $f \, (g_\lambda \circ \Phi)$,
with 
\begin{equation}
\label{eq:fglambdaphi}
\begin{aligned}
\begin{aligned}
f & = R_{0,j}(t) \\
\Phi & = \Phi_{i_1}(t) \\
g & = \Theta_{\mu'}^j  \\
\lambda & = \lambda' \\
\end{aligned}
& \qquad \text{or} \qquad & 
\begin{aligned}
f & = R_{0,j}(t), \\
\Phi & = \Phi_{i_2}(t), \\
g & = \Theta_{\mu''}^j, \\
\lambda & = \lambda ''.
\end{aligned}
\end{aligned}
\end{equation} 
Therefore we can apply the improved H\"older inequality, Lemma \ref{p:improved:holder}, to get
\begin{equation*}
\begin{split}
\|\vartheta(t)\|_{L^1} & \leq \eta \|R_0(t)\|_{L^1} \sum_{j=1}^d \|\Theta_{\mu'}^j\|_{L^1}  
\\ 
& \qquad \qquad 
+ \frac{C(\eta, \|R_0\|_{C^1}, \max_{i=1, \dots, n} \|D \Phi_i\|_{C^0})}{\lambda'}  \sum_{j=1}^d \|\Theta_{\mu'}^j\|_{L^1} \\
& \quad + \eta \|R_0(t)\|_{L^1} \sum_{j=1}^d \|\Theta_{\mu''}^j\|_{L^1} 
\\
& \qquad \qquad 
+ \frac{C(\eta, \|R_0\|_{C^1}, \max_{i=1, \dots, n} \|D \Phi_i\|_{C^0})}{\lambda''}  \sum_{j=1}^d \|\Theta_{\mu''}^j\|_{L^1} \\
\text{(by \eqref{eq:mikado:est:1})}
& \leq \frac{M}{2} \eta \|R_0(t)\|_{L^1} 
+ C\Big(M, \eta, \|R_0\|_{C^1}, \max_{i=1, \dots, n} \|D \Phi_i\|_{C^0}\Big) \bigg( \frac{1}{\lambda'} + \frac{1}{\lambda''} \bigg). \ifjems \qedhere \fi
\end{split}
\end{equation*} 
\end{proof}

%%%\newpage
\begin{lemma}[Estimate on $\vartheta_c$]
\label{l:vartheta:c}
For every time $t \in [0,1]$,
\begin{equation*}
|\vartheta_c(t)| \leq  C\Big(M, \eta,\|R_0\|_{C^1}, \max_{i=1,\dots, N} \|D \Phi_i\|_{C^0} \Big) \bigg( \frac{1}{\lambda'} + \frac{1}{\lambda''} \bigg).
\end{equation*}
\end{lemma}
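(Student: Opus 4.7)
The plan is to apply the mean value estimate of Lemma \ref{l:mean:value} (specifically the diffeomorphism version \eqref{eq:mean:value:diffeo}) term-by-term to the expression obtained by integrating \eqref{eq:perturbation} over $\T^d$. The key observation is that each Mikado density $\Theta_{\mu}^{j}$ has zero mean on $\T^d$ by \eqref{eq:mikado:eq}, so \eqref{eq:mean:value:diffeo} applies with $f = R_{0,j}(t,\cdot)$, $\Phi = \Phi_i(t,\cdot)$, $g = \Theta_{\mu'}^{j}$ or $\Theta_{\mu''}^{j}$, and $\lambda = \lambda'$ or $\lambda''$ respectively.

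More precisely, fixing $t \in [0,1]$, I would use the reduced expression \eqref{eq:only:two:cutoff} so that only two cutoffs $\alpha_{i_1}(t), \alpha_{i_2}(t) \in [0,1]$ contribute. Since $\psi(t) \in [0,1]$, this yields
\begin{equation*}
|\vartheta_c(t)| \leq \eta \sum_{j=1}^d \biggl| \fint_{\T^d} R_{0,j}(t,x) \Theta_{\mu'}^{j}\bigl(\lambda' \Phi_{i_1}(t,x)\bigr) dx \biggr| + \eta \sum_{j=1}^d \biggl| \fint_{\T^d} R_{0,j}(t,x) \Theta_{\mu''}^{j}\bigl(\lambda'' \Phi_{i_2}(t,x)\bigr) dx \biggr|.
\end{equation*}
Applying \eqref{eq:mean:value:diffeo} to each summand, together with $\|R_{0,j}\|_{C^1} \leq \|R_0\|_{C^1}$ and the Mikado bound $\sum_{j=1}^d \|\Theta_{\mu}^{j}\|_{L^1} \leq M/4$ from \eqref{eq:mikado:est:1}, produces
\begin{equation*}
|\vartheta_c(t)| \leq \sqrt{d}\, \eta \|R_0\|_{C^1} \max_{i} \|D \Phi_i\|_{C^0}^{d-1} \cdot \frac{M}{4} \left( \frac{1}{\lambda'} + \frac{1}{\lambda''} \right),
\end{equation*}
which has exactly the claimed form $C(M,\eta,\|R_0\|_{C^1},\max_i\|D\Phi_i\|_{C^0})(1/\lambda' + 1/\lambda'')$.

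There is no real obstacle here: the statement is essentially a direct consequence of Lemma \ref{l:mean:value} combined with the Mikado $L^1$ bound. The only minor points to check are (i) that $\Theta_{\mu}^{j}$ indeed has zero mean (which it does by \eqref{eq:mikado:eq}), (ii) that $\Phi_i(t)$ is a measure-preserving diffeomorphism for each $t$ (noted right after \eqref{eq:inverse:flow}), and (iii) that the bounded prefactors $\eta \psi(t) \alpha_{i_k}(t)$ can be absorbed into the constant. The only technical subtlety worth mentioning is that the constant depends on $\max_i \|D \Phi_i\|_{C^0}$ through the $(d-1)$-th power coming from \eqref{eq:mean:value:diffeo}, which is exactly how the statement is phrased.
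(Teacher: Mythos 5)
Your proposal is correct and follows essentially the same route as the paper: reduce to the local form \eqref{eq:only:two:cutoff}, recognize each summand as $f\,(g_\lambda\circ\Phi)$ with $f=R_{0,j}(t)$, $g$ a Mikado density (zero mean by \eqref{eq:mikado:eq}), and $\Phi=\Phi_{i_k}(t)$ measure-preserving, then apply \eqref{eq:mean:value:diffeo} and the $L^1$ Mikado bound \eqref{eq:mikado:est:1}. No discrepancies.
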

\begin{proof}
As in the proof of Lemma \ref{l:lp:vartheta}, we can use for $\vartheta(t)$ the form \eqref{eq:only:two:cutoff} and we observe that each term in the summation over $j$ has the form $f \, (g_\lambda \circ \Phi)$, with $f,\Phi,g, \lambda$ as in \eqref{eq:fglambdaphi}. We can thus apply Lemma \ref{l:mean:value} to get:
\begin{equation*}
\begin{split}
|\vartheta_c(t)| & \leq  
C\Big(\eta, \|R_0\|_{C^1}, \max_{i=1, \dots, n} \|D \Phi_i\|_{C^0}\Big) \Bigg[ \frac{1}{\lambda'}\sum_{j=1}^d \|\Theta_{\mu'}^j\|_{L^1} + \frac{1}{\lambda''} \sum_{j=1}^d \|\Theta_{\mu''}^j\|_{L^1}\Bigg] \\
\text{(by \eqref{eq:mikado:est:1})} 
& \leq C\Big(M, \eta, \|R_0\|_{C^1}, \max_{i=1, \dots, n} \|D \Phi_i\|_{C^0}\Big) \Bigg( \frac{1}{\lambda'} + \frac{1}{\lambda''} \Bigg). \ifjems \qedhere \fi
\end{split}
\end{equation*}
\end{proof}

%%%\newpage
\begin{lemma}[$C^0$ norm of $w$]
\label{l:lp:w}
For every time $t \in [0,1]$,
\begin{equation*}
\begin{split}
\|w(t)\|_{C^0(\T^d)} & \leq \frac{M}{2\eta} \max_{i=1,\dots, N} \|(D \Phi_i)^{-1}\|_{C^0 (\supp \alpha_i \times \T^d)}. 
\end{split}
\end{equation*}
%and, moreover, $w(t) \equiv 0 \text{ if } R_0(t) \equiv 0$. 
%\begin{equation*}
%w(t) \equiv 0 \text{ if } R_0(t) \equiv 0.
%\end{equation*}
\end{lemma}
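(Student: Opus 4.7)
The plan is to reduce to the two-cutoff form at a fixed time and then bound each factor pointwise, using the uniform bound on the Mikado fields from \eqref{eq:mikado:est:1}.

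First I would fix $t \in [0,1]$. As noted after \eqref{eq:only:two:cutoff}, at most one odd index $i_1$ and one even index $i_2$ satisfy $\alpha_i(t) \neq 0$ at this time, so
\begin{equation*}
w(t,x) = \frac{\psi(t)}{\eta} \Bigl\{ \alpha_{i_1}(t) \sum_{j=1}^d (D\Phi_{i_1}(t,x))^{-1} W_{\mu'}^j(\lambda' \Phi_{i_1}(t,x)) + \alpha_{i_2}(t) \sum_{j=1}^d (D\Phi_{i_2}(t,x))^{-1} W_{\mu''}^j(\lambda'' \Phi_{i_2}(t,x)) \Bigr\}.
\end{equation*}

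Next I would estimate each term pointwise using the operator-norm bound $|(D\Phi_i(t,x))^{-1} v| \leq \|(D\Phi_i)^{-1}\|_{C^0(\supp\alpha_i\times \T^d)} |v|$ together with the trivial bound $|W_\mu^j(\lambda \Phi_i(t,x))| \leq \|W_\mu^j\|_{L^\infty(\T^d)}$. Using $\psi(t), \alpha_i(t) \in [0,1]$ and setting $K := \max_i \|(D\Phi_i)^{-1}\|_{C^0(\supp\alpha_i \times \T^d)}$, this yields
\begin{equation*}
|w(t,x)| \leq \frac{1}{\eta} K \Bigl( \alpha_{i_1}(t) \sum_{j=1}^d \|W_{\mu'}^j\|_{L^\infty} + \alpha_{i_2}(t) \sum_{j=1}^d \|W_{\mu''}^j\|_{L^\infty} \Bigr).
\end{equation*}

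Finally I would invoke \eqref{eq:mikado:est:1}, which gives $\sum_{j=1}^d \|W_\mu^j\|_{L^\infty(\T^d)} \leq M/4$ for any admissible $\mu$, together with $\alpha_{i_1}(t) + \alpha_{i_2}(t) \leq 2$, to conclude
\begin{equation*}
|w(t,x)| \leq \frac{1}{\eta} K \cdot 2 \cdot \frac{M}{4} = \frac{M}{2\eta}\, K,
\end{equation*}
which is precisely the desired estimate after taking the supremum in $x$. There is no real obstacle here: the whole argument is a direct pointwise bound, since the definition of $w$ is already a sum of only two active terms at each time and all factors other than $(D\Phi_i)^{-1}$ are uniformly controlled by constants that were built into the choice \eqref{eq:M} of $M$.
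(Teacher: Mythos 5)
Your proof is correct and follows essentially the same route as the paper: reduce to the two-cutoff form \eqref{eq:only:two:cutoff}, bound $\psi,\alpha_i$ by $1$, bound $(D\Phi_i)^{-1}$ in operator norm, bound the Mikado fields in $L^\infty$ via \eqref{eq:mikado:est:1}, and collect the factor $2\cdot\tfrac{M}{4}=\tfrac{M}{2}$. The only cosmetic difference is that you keep the cutoffs $\alpha_{i_1},\alpha_{i_2}$ one step longer before discarding them, which changes nothing.
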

\begin{proof}
As in the proof of Lemma \ref{l:lp:vartheta} we can use for $w(t)$ the form \eqref{eq:only:two:cutoff}. Therefore
\begin{equation*}
\begin{split}
\|w(t)\|_{C^0(\T^d)} 
& \leq \frac{1}{\eta} \max_{i=1,\dots, N} \|(D \Phi_i)^{-1}\|_{C^0(\supp \alpha_i \times \T^d)} \bigg( \sum_{j=1}^d \|W_{\mu'}^j\|_{L^\infty} + \|W_{\mu''}^j\|_{L^\infty} \bigg) \\
%& \text{(by Lemma \ref{l:diffeo})} \\
%& \leq \frac{1}{\eta} \max_{i=1,\dots, N} \|D \Phi_i\|^{d-1}_{C^0(\supp \alpha_i \times \T^d)} \bigg( \sum_{j=1}^d \|W_{\mu'}^j\|_{L^\infty} + \|W_{\mu''}^j\|_{L^\infty} \bigg) \\
\text{(by \eqref{eq:mikado:est:1})}
& \leq \frac{M}{2\eta}  \max_{i=1,\dots, N} \|(D \Phi_i)^{-1}\|_{C^0(\supp \alpha_i \times \T^d)}.   \ifjems \qedhere \fi
\end{split}
\end{equation*}
\end{proof}

\begin{lemma}[$W^{1,p}$ norm of $w$]
\label{l:w1p:w}
For every time $t \in [0,1]$, 
\begin{equation*}
\|w(t)\|_{W^{1, p}(\T^d)} \leq C \Big(M, \eta,  \max_{i=1, \dots, N} \| D \Phi_i \|_{C^1} \Big) \bigg(\lambda' (\mu')^{1 -(d-1)/p} + \lambda'' (\mu'')^{1 -(d-1)/p} \bigg).
\end{equation*}
\end{lemma}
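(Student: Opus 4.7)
\textbf{Plan for the proof of Lemma \ref{l:w1p:w}.} Fix $t\in[0,1]$. Arguing exactly as in the proofs of Lemmas \ref{l:lp:vartheta}--\ref{l:lp:w}, I reduce to the form \eqref{eq:only:two:cutoff}, so that $w(t)$ is a sum of at most $2d$ terms of the type
\[
\eta^{-1}\psi(t)\,\alpha_i(t)\,(D\Phi_i(t))^{-1}\bigl((W_\mu^j)_{\lambda}\circ\Phi_i(t)\bigr),
\]
with $(\mu,\lambda)=(\mu',\lambda')$ for odd $i$ and $(\mu,\lambda)=(\mu'',\lambda'')$ for even $i$. Since $|\psi|,|\alpha_i|\le 1$, it suffices to bound the $W^{1,p}$-norm of each summand separately and to sum up at the end.

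For a single summand I would apply Leibniz' rule, splitting its $W^{1,p}$-norm into three contributions: the plain $L^p$-product $(D\Phi_i)^{-1}\cdot h$, the term $D((D\Phi_i)^{-1})\cdot h$, and the term $(D\Phi_i)^{-1}\cdot Dh$, where $h:=(W_\mu^j)_\lambda\circ\Phi_i$ and $Dh=\lambda(DW_\mu^j)(\lambda\Phi_i)\cdot D\Phi_i$. For the first and second contributions, Lemma \ref{l:diffeo} gives $\|(D\Phi_i)^{-1}\|_{C^0}\le\|D\Phi_i\|_{C^0}^{d-1}$ and $\|D((D\Phi_i)^{-1})\|_{C^0}\le C\|D\Phi_i\|_{C^1}^{d-1}$, while the measure-preservation of $\Phi_i$ together with Lemma \ref{l:diffeo:estimate} and \eqref{eq:fast:osc:deriv} yields $\|h\|_{L^p}=\|(W_\mu^j)_\lambda\|_{L^p}=\|W_\mu^j\|_{L^p}$. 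For the third, the same lemmas give $\|(D\Phi_i)^{-1}\cdot Dh\|_{L^p}\le\lambda\|D\Phi_i\|_{C^0}^{d}\|DW_\mu^j\|_{L^p}$. Since $\|D\Phi_i\|_{C^1}\ge 1$ and $\|D\Phi_i\|_{C^0}\le\|D\Phi_i\|_{C^1}$ (see the remark preceding Lemma \ref{l:diffeo}), the three contributions combine into $C\|D\Phi_i\|_{C^1}^{d-1}\bigl(\|W_\mu^j\|_{L^p}+\lambda\|DW_\mu^j\|_{L^p}\bigr)$.

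Finally, invoking the Mikado field estimate \eqref{eq:mikado:est:2}, one has, for $\lambda\ge 1$, $\|W_\mu^j\|_{L^p}+\lambda\|DW_\mu^j\|_{L^p}\le\lambda\|W_\mu^j\|_{W^{1,p}}\le M\lambda\mu^{1-(d-1)/p}$, so each summand is controlled by $CM\eta^{-1}\|D\Phi_i\|_{C^1}^{d-1}\lambda\mu^{1-(d-1)/p}$. Summing over $j=1,\dots,d$ and over the two active cutoffs---one contributing with $(\lambda',\mu')$ and one with $(\lambda'',\mu'')$---delivers the claimed estimate, with $\eta^{-1}$ and all universal constants absorbed into $C(M,\eta,\max_i\|D\Phi_i\|_{C^1})$. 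The only real obstacle is tracking the exponent $d-1$ on $\|D\Phi_i\|_{C^1}$: this accounts for the $(d-1)$-th power already present in Lemma \ref{l:diffeo} for the inverse Jacobian, while the extra factor of $D\Phi_i$ produced by differentiating the composition $h$ is absorbed via the monotonicity $\|D\Phi_i\|_{C^1}\ge 1$.
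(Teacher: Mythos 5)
Your proof is correct and follows essentially the same route as the paper: apply the product rule to a single summand of $w(t)$ in the form \eqref{eq:only:two:cutoff}, control the $(D\Phi_i)^{-1}$ and $D((D\Phi_i)^{-1})$ factors via Lemma \ref{l:diffeo}, use measure-preservation and \eqref{eq:fast:osc:deriv} for the $L^p$-norms of the Mikado factors, and close with \eqref{eq:mikado:est:2}. The only slip is cosmetic: your third contribution actually carries $\|D\Phi_i\|_{C^0}^{d}$ rather than $\|D\Phi_i\|_{C^1}^{d-1}$, but since the lemma only asserts dependence on $\max_i\|D\Phi_i\|_{C^1}$ without a fixed exponent, this is harmless.
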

\begin{proof}
As in the proof Lemma \ref{l:lp:w} we can use for $w(t)$ the form \eqref{eq:only:two:cutoff}. Taking one partial derivative $\partial_k$, we get
\begin{equation*}
\begin{split}
\partial_k w(t) & = \frac{\psi(t)}{\eta} \Bigg\{ \alpha_{i_1}(t) \sum_{j=1}^d \bigg[ \partial_k  (D \Phi_{i_1}(t))^{-1}  W_{\mu'}^j\big(\lambda' \Phi_{i_1}(t) \big) \\
& \qquad \qquad \qquad \qquad + \lambda' (D \Phi_{i_1}(t))^{-1} D W_{\mu'}^j \big(\lambda' \Phi_{i_1}(t) \big) D \Phi_{i_1}(t) e_k \bigg] \\
& \qquad \quad + \alpha_{i_2}(t) \sum_{j=1}^d \bigg[ \partial_k  (D \Phi_{i_2}(t))^{-1}  W_{\mu''}^j\big(\lambda'' \Phi_{i_2}(t) \big) \\
& \qquad \qquad \qquad \qquad + \lambda'' (D \Phi_{i_2}(t))^{-1} D W_{\mu''}^j \big(\lambda'' \Phi_{i_2}(t) \big) D \Phi_{i_2}(t) e_k \bigg] \Bigg\}.
\end{split}
\end{equation*}
We now apply the classical H\"older inequality to estimate $\|\partial_k w(t)\|_{L^{p}}$:
\begin{equation*}
\begin{split}
\|  \partial_k w (t)\|_{L^{p}}
& \leq \frac{\psi(t)}{\eta}  \Bigg\{ \max_{i=1, \dots, N} \big\| D (D \Phi_i)^{-1} \big\|_{C^0} \bigg( \sum_{j=1}^d \|W_{\mu'}^j\|_{L^{p}} + \|W_{\mu''}^j\|_{L^{p}} \bigg) \\
& \qquad \qquad + \max_{i=1, \dots, N} \big\|(D \Phi_i)^{-1}\big\|_{C^0} \|D\Phi_i\|_{C^0} \cdot \\
&  \qquad \qquad \qquad \qquad \qquad \qquad \cdot \bigg(\lambda' \sum_{j=1}^d  \|DW_{\mu'}^j\|_{L^{p}} + \lambda'' \sum_{j=1}^d \|DW_{\mu''}^j\|_{L^{p}} \bigg) \Bigg\} \\
& \text{(by Lemma \ref{l:diffeo})} \\
& \leq C \Big(\eta, \max_{i=1, \dots, N} \|D \Phi_i\|_{C^1} \Big) \bigg(\lambda' \sum_{j=1}^d  \|W_{\mu'}^j\|_{W^{1,p}} + \lambda'' \sum_{j=1}^d \|W_{\mu''}^j\|_{W^{1,p}} \bigg) \\
\text{(by \eqref{eq:mikado:est:2})}
& \leq 
C \Big(M, \eta,  \max_{i=1, \dots, N} \| D \Phi_i \|_{C^1} \Big) \bigg(\lambda' (\mu')^{1 -(d-1)/p} + \lambda'' (\mu'')^{1 -(d-1)/p} \bigg).
\end{split}
\end{equation*}
A similar (and even easier) computation holds for $\|w(t)\|_{L^{p}}$, thus concluding the proof of the lemma. 
\end{proof}

\section{The new defect field}
\label{s:reynolds}

In this section we continue the proof of Proposition \ref{p:main}, defining the new defect field $R_1$ and  estimating it.

\subsection{Definition of the new defect field}

We want to define $R_1$ so that
\begin{equation}
\label{eq:R1}
-\div R_1 = \partial_t \rho_1 + \div(\rho_1 u_1).
\end{equation}
Let us compute
\begin{equation}
\label{eq:new:rest:computation}
\begin{split}
\partial_t \rho_1 + \div(\rho_1 u_1) 
	& = \div (\vartheta w - R_0) \\
	& \quad + \Big[ \partial_t (\vartheta + \vartheta_c) + \div\big( (\vartheta + \vartheta_c) u_0  \big) \Big] \\
	& \quad + \div (\rho_0 w ) + \div(\vartheta_c w)  \\
	& = \div ( R^{\rm interaction}  + R^{\rm flow} + R^{\psi}+ R^{\rm quadr}) \\
	& \quad + \div R^{\rm transport} \\
	& \quad + \div R^{\rm Nash} + \div R^{\rm corr}  \\
\end{split}
\end{equation}
where we put 
\begin{equation}
\label{eq:rlin:rcorr}
\begin{split}
R^{\rm Nash} 	:=  \rho_0 w, \qquad R^{\rm corr} 	 := \vartheta_c w,
\end{split}
\end{equation}
and $R^{\rm interaction}$,  $R^{\rm flow}$, $R^{\psi}$, $R^{\rm quadr}$, $R^{\rm transport}$ will be defined respectively in \eqref{eq:r:interaction},  \eqref{eq:r:flow}, \eqref{eq:r:psi}, \eqref{eq:r:quadr}, \eqref{eq:def:rtransport} in such a way that
\begin{subequations}
\begin{align}
\div (\vartheta w - R_0) & = \div (R^{\rm interaction}  + R^{\rm flow} + R^{\psi} + R^{\rm quadr}) \label{eq:rquadr} \\
\partial_t (\vartheta + \vartheta_c) + \div\big((\vartheta + \vartheta_c) u_0 \big) & = \div R^{\rm transport}. \label{eq:rtransport}
\end{align}
\end{subequations}
We thus define
\begin{equation}
\label{eq:reynolds}
- R_1 :=  R^{\rm interaction}  +  R^{\rm flow} + R^{\psi} + R^{\rm quadr} + R^{\rm transport} + R^{\rm Nash} + R^{\rm corr},
\end{equation}
so that \eqref{eq:R1} holds.

%\noindent Aim of the next section will be to get an estimate in $L^1$ for $R_1(t)$, by estimating separately each term in \eqref{eq:reynolds}. 

%%\newpage
\subsection{Definition and estimates for $R^{\rm interaction}, R^{\rm flow}, R^{\psi}, R^{\rm quadr}$}
\label{ss:r:quadr}

In this section we define and estimate the vector fields $R^{\rm quadr}$, $R^{\rm interaction}$, $R^\psi$ and $R^{\rm flow}$ so that \eqref{eq:rquadr} holds. First of all, we want to compute more explicitly $\div(\vartheta(t)w(t) - R_0(t))$, for every fixed time $t$. We can use the form \eqref{eq:only:two:cutoff} for $\vartheta(t)$ and $w(t)$. Exploiting the fact that for $j \neq k$, $\Theta_{\mu}^j$ and $W_{\mu}^k$ have disjoint support (see Proposition \ref{p:mikado}), we have
\begin{equation*}
\begin{split}
\vartheta(t) w(t) 
& = \psi^2(t) \bigg\{ \alpha_{i_1}^2(t) \sum_{j=1}^d R_{0,j}(t) (D \Phi_{i_1}(t))^{-1} \Theta_{\mu'}^j\big(\lambda' \Phi_{i_1}(t) \big) W_{\mu'}^j \big( \lambda' \Phi_{i_1}(t) \big) \\
& \qquad \qquad \quad +  \alpha_{i_2}^2(t) \sum_{j=1}^d R_{0,j}(t) (D \Phi_{i_2}(t))^{-1} \Theta_{\mu''}^j\big(\lambda'' \Phi_{i_2}(t) \big) W_{\mu''}^j \big( \lambda'' \Phi_{i_2}(t) \big) \bigg\} \\
& \quad + R^{\rm interaction}(t),
\end{split}
\end{equation*}
where we set
\begin{equation}
\label{eq:r:interaction}
\begin{split}
R^{\rm i}& ^{\rm nteraction}(t) \\
& := \psi^2(t) \alpha_{i_1}(t) \alpha_{i_2}(t) \sum_{j,k=1}^d\bigg[ R_{0,j}(t) (D\Phi_{i_2}(t))^{-1} \Theta_{\mu'}^j \big(\lambda' \Phi_{i_1}(t) \big) W_{\mu''}^k \big(\lambda'' \Phi_{i_2}(t) \big)   \\
& \qquad \qquad \qquad \quad \qquad + R_{0,j}(t) (D \Phi_{i_1}(t))^{-1} \Theta_{\mu''}^j\big(\lambda'' \Phi_{i_2}(t) \big) W_{\mu'}^k\big(\lambda' \Phi_{i_1}(t) \big)
\bigg].
\end{split}
\end{equation}
On the other side, using the fact that $\sum_{i=1}^N \alpha_i^2 \equiv 1$, we can write
\begin{equation*}
\begin{split}
R_0(t) & = \psi^2(t) R_0(t) + R_0(t) \big[ 1 - \psi^2(t) \big] \\
& = \psi^2(t) R_0(t) - R^{\psi}(t) \\
& \text{(using that {$\textstyle \sum\nolimits_{i=1}^N \alpha_i^2 \equiv 1$})} \\
& = \psi^2(t) \Big\{  \alpha_{i_1}^2(t) R_0(t) + \alpha_{i_2}^2(t) R_0(t) \Big\} - R^{\psi}(t) \\
& = \psi^2(t) \Big\{ \alpha_{i_1}^2(t) \big[D\Phi_{i_1}(t)\big]^{-1} R_0(t) +  \alpha_{i_2}^2(t) \big[D \Phi_{i_2}(t)\big]^{-1} R_0(t) \Big\} 
%\\
%& \qquad 
- R^{\rm flow}(t) - R^{\psi}(t) \\
& = \psi^2(t) \bigg\{ \alpha_{i_1}^2(t) \sum_{j=1}^d R_{0,j}(t) \big[D\Phi_{i_1}(t)\big]^{-1} e_j +  \alpha_{i_2}^2(t) \sum_{j=1}^d R_{0,j}(t) \big[D \Phi_{i_2}(t)\big]^{-1} e_j \bigg\} \\
& \qquad - R^{\rm flow}(t) - R^{\psi}(t),
\end{split}
\end{equation*}
where we set
\begin{equation}
\label{eq:r:flow}
\begin{split}
- R  ^{\rm flow}(t) 
& := \psi^2(t) \bigg\{ \alpha_{i_1}^2(t) \Big[ {\rm Id} -  (D\Phi_{i_1}(t))^{-1} \Big] R_0(t)   
\\
& \qquad \qquad \quad
+ \alpha_{i_2}^2(t) \Big[ {\rm Id} -  (D\Phi_{i_2}(t))^{-1} \Big] R_0(t) \bigg\}
\end{split}
\end{equation}
with ${\rm Id}$ being the identity matrix, and
\begin{equation}
\label{eq:r:psi}
- R^{\psi}(t)  := R_0(t) \big[ 1 - \psi^2(t) \big].
\end{equation}
Summarizing, we have
\begin{equation*}
\begin{split}
\div & \big( \vartheta(t) w(t) - R_0(t) \big) \\
& = \div R^{\rm interaction}(t) + \div R^{\rm flow}(t) + \div R^{\psi}(t) \\
& \quad + \psi^2(t) \Bigg\{ \alpha_{i_1}^2(t) \sum_{j=1}^d \div \bigg[ R_{0,j} \big[ D \Phi_{i_1}(t) \big]^{-1} \Big( \Theta_{\mu'}^j \big(\lambda' \Phi_{i_1}(t) \big) W_{\mu'}^j \big( \lambda' \Phi_{i_1}(t) \big) - e_j \Big) \bigg] \\
& \qquad \qquad \quad + \alpha_{i_2}^2(t) \sum_{j=1}^d \div \bigg[ R_{0,j} \big[ D \Phi_{i_2}(t) \big]^{-1} \Big( \Theta_{\mu''}^j \big(\lambda'' \Phi_{i_2}(t) \big) W_{\mu''}^j \big( \lambda'' \Phi_{i_2}(t) \big) - e_j \Big) \bigg] \Bigg\} \\
& = \div R^{\rm interaction}(t) + \div R^{\rm flow}(t) + \div R^{\psi}(t) \\
& \quad + \psi^2(t) \Bigg\{ \alpha_{i_1}^2(t) \sum_{j=1}^d   \nabla R_{0,j} \cdot \big[ D \Phi_{i_1}(t) \big]^{-1} \Big( \Theta_{\mu'}^j \big(\lambda' \Phi_{i_1}(t) \big) W_{\mu'}^j \big( \lambda' \Phi_{i_1}(t) \big) - e_j \Big)  \\
& \qquad \qquad \quad + \alpha_{i_2}^2(t) \sum_{j=1}^d   \nabla R_{0,j} \cdot \big[ D \Phi_{i_2}(t) \big]^{-1} \Big( \Theta_{\mu''}^j \big(\lambda'' \Phi_{i_2}(t) \big) W_{\mu''}^j \big( \lambda'' \Phi_{i_2}(t) \big) - e_j \Big)  \Bigg\}, \\
\end{split}
\end{equation*}
where in the last equality we used the fact that $\div ((\Theta_\mu^j)_\lambda (W_\mu^j)_\lambda - e_j) = 0$ for every $\mu, \lambda, j$ (see Proposition \ref{p:mikado}) and Lemma \ref{l:divergence:diffeo}. 
We now observe that each term in the two summations over $j$ has zero mean value (being a divergence) and it has the form $f (D \Phi)^{-1} (g_\lambda \circ \Phi)$, for
\begin{equation*}
\begin{aligned}
\begin{aligned}
f & = \nabla R_{0,j}(t) \\
\Phi & = \Phi_{i_1}(t) \\
g & = \Theta_{\mu'}^j W_{\mu'}^j - e_j \\
\lambda & = \lambda' 
\end{aligned}
& \quad \text{ or } \quad & 
\begin{aligned}
f & = \nabla R_{0,j}(t) \\
\Phi & = \Phi_{i_2}(t) \\
g & = \Theta_{\mu''}^j W_{\mu''}^j - e_j \\
\lambda & = \lambda'' 
\end{aligned}
\end{aligned}
\end{equation*} 
We can therefore apply Lemma \ref{p:antidiv:transport} and define
\begin{equation}
\label{eq:r:quadr}
\begin{split}
&R ^{\rm quadr}(t) \\ 
& := \psi^2(t)  \Bigg\{ \alpha_{i_1}^2(t) \sum_{j=1}^d \mathcal{R} \bigg(\nabla R_{0,j} \cdot  \big[ D \Phi_{i_1}(t) \big]^{-1} \Big( \Theta_{\mu'}^j \big(\lambda' \Phi_{i_1}(t) \big) W_{\mu'}^j \big( \lambda' \Phi_{i_1}(t) \big) - e_j \Big) \bigg) \\
& \quad \ \ + \alpha_{i_2}^2(t) \sum_{j=1}^d \mathcal{R} \bigg( \nabla R_{0,j} \cdot \big[ D \Phi_{i_2}(t) \big]^{-1} \Big( \Theta_{\mu''}^j \big(\lambda'' \Phi_{i_2}(t) \big) W_{\mu''}^j \big( \lambda'' \Phi_{i_2}(t) \big) - e_j \Big) \bigg) \Bigg\},
\end{split}
\end{equation}
so that \eqref{eq:rquadr} holds. We now separately estimate $R^{\rm interaction}$, $R^{\rm flow}$, $R^{\psi}$, $R^{\rm quadr}$. We start with $R^{\rm interaction}$. 

\begin{lemma}
\label{l:rinteraction}
For every time $t$ it holds
\begin{equation*}
\begin{split}
\|R^{\rm interaction} & (t)\|_{L^1(\T^d)} \\
& \leq C \Big(M,  \|R_0\|_{C^0}, \max_{i=1,\dots, N} \|D \Phi_i\|_{C^0} \Big)  \bigg( \frac{1}{(\mu')^{d-1}} + \frac{1}{(\mu'')^{d-1}} + \frac{\lambda' \mu'}{\lambda''} + \frac{\lambda' (\mu')^d}{\lambda'' (\mu'')^{d-1}} \bigg).
\end{split}
\end{equation*}
\end{lemma}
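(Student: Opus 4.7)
The plan is to bound $R^{\rm interaction}$ term by term by applying the improved Hölder inequality with diffeomorphism (Lemma \ref{p:improved:holder}) at $p=1$, treating the \emph{very fast} oscillating factor (frequency $\lambda''$) as the $g_\lambda \circ \Phi$ part and absorbing everything else (including the \emph{fast} factor at frequency $\lambda'$) into the \emph{slow} factor $f$. This is the whole point of having introduced two separated frequencies $\lambda' \ll \lambda''$, $\mu' \ll \mu''$: the cutoffs $\alpha_{i_1}, \alpha_{i_2}$ force a non-trivial interaction between Mikado families living on different scales, and the gap between them is what makes the correction term in the improved Hölder inequality small.

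Concretely, $R^{\rm interaction}(t)$ is a sum over $j,k$ of two kinds of terms. For a summand of the first kind, $R_{0,j}(D\Phi_{i_2})^{-1}\Theta_{\mu'}^j(\lambda'\Phi_{i_1})\,W_{\mu''}^k(\lambda''\Phi_{i_2})$, I will set
\begin{equation*}
f := R_{0,j}\,(D\Phi_{i_2})^{-1}\,\Theta_{\mu'}^j(\lambda'\Phi_{i_1}),\qquad g := W_{\mu''}^k,\qquad \Phi := \Phi_{i_2},\qquad \lambda := \lambda'',
\end{equation*}
and apply \eqref{eq:improved:holder:diffeo}. The ``main'' contribution $\|f\|_{L^1}\|g\|_{L^1}$ is controlled by $\|R_0\|_{C^0}\|D\Phi_{i_2}\|_{C^0}^{d-1}\,\|\Theta_{\mu'}^j\|_{L^1}\|W_{\mu''}^k\|_{L^1}$, which is of order $(\mu'')^{-(d-1)}$ by \eqref{eq:mikado:est:1}--\eqref{eq:mikado:est:2} and the measure‑preserving property of $\Phi_{i_1}$ (Lemma \ref{l:diffeo:estimate}). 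The ``correction'' involves $\|f\|_{C^1}$; the only dangerous factor is $\|\Theta_{\mu'}^j(\lambda'\Phi_{i_1})\|_{C^1}\lesssim \lambda'\|\Theta_{\mu'}^j\|_{C^1}\,\|D\Phi_{i_1}\|_{C^0}\lesssim \lambda'(\mu')^d$ by \eqref{eq:mikado:est:3}, and dividing by $\lambda''$ yields the term $\lambda'(\mu')^d/[\lambda''(\mu'')^{d-1}]$.

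For a summand of the second kind, $R_{0,j}(D\Phi_{i_1})^{-1}\Theta_{\mu''}^j(\lambda''\Phi_{i_2})\,W_{\mu'}^k(\lambda'\Phi_{i_1})$, I will again isolate the $\lambda''$-oscillating factor but now it is $\Theta_{\mu''}^j$, so I set
\begin{equation*}
f := R_{0,j}\,(D\Phi_{i_1})^{-1}\,W_{\mu'}^k(\lambda'\Phi_{i_1}),\qquad g := \Theta_{\mu''}^j,\qquad \Phi := \Phi_{i_2},\qquad \lambda := \lambda''.
\end{equation*}
The main contribution $\|f\|_{L^1}\|\Theta_{\mu''}^j\|_{L^1}\lesssim \|W_{\mu'}^k\|_{L^1}\lesssim (\mu')^{-(d-1)}$ yields the $(\mu')^{-(d-1)}$ term. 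For the correction, $\|f\|_{C^1}\lesssim \lambda'\|W_{\mu'}^k\|_{C^1}\|D\Phi_{i_1}\|_{C^0}\lesssim \lambda'\mu'$ by \eqref{eq:mikado:est:3}, and dividing by $\lambda''$ gives exactly $\lambda'\mu'/\lambda''$. Summing the two kinds of contributions over $j,k$ and bounding $\psi^2\alpha_{i_1}\alpha_{i_2}\leq 1$ produces the claimed inequality.

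The only real subtlety is the correct bookkeeping of the $C^1$-norms of the composed functions $\Theta_{\mu'}^j(\lambda'\Phi_{i_1})$ and $W_{\mu'}^k(\lambda'\Phi_{i_1})$: one must use the estimate \eqref{eq:mikado:est:3} (which distinguishes between the two Mikado families because of the asymmetric choice $a=d-1$, $b=0$) together with the chain rule and Lemma \ref{l:diffeo}. Once this is handled cleanly, the structure of the bound is dictated by the improved Hölder inequality and the decision — in each summand — to always treat $\lambda''$ as the ``very fast'' frequency. I expect no genuine difficulty beyond this careful choice of which factors to group.
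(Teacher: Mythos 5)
Your proposal takes essentially the same approach as the paper: apply the improved H\"older inequality (Lemma~\ref{p:improved:holder}) at $p=1$ with $\lambda''$ treated as the fast frequency, use \eqref{eq:mikado:est:3} and the chain rule to control the $C^1$-norm of the $\lambda'$-oscillating factor (obtaining $\lambda'(\mu')^d$ or $\lambda'\mu'$ according to whether it is a $\Theta$ or a $W$), and exploit the $L^1$ concentration smallness of $W_{\mu'}^k$ or $W_{\mu''}^k$ for both the main and the correction terms. This is exactly the paper's mechanism, and both kinds of summands are handled correctly.

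One small technical discrepancy: you include the prefactor $R_{0,j}\,(D\Phi_{i})^{-1}$ inside the slow function $f$. The correction term in \eqref{eq:improved:holder:diffeo} carries a factor $\|f\|_{C^1}$, so with your grouping the constant would depend on $\|R_0\|_{C^1}$ and, via Lemma~\ref{l:diffeo} applied to $\|(D\Phi_i)^{-1}\|_{C^1}$, on $\max_i\|D\Phi_i\|_{C^1}$, whereas the lemma as stated only allows $\|R_0\|_{C^0}$ and $\max_i\|D\Phi_i\|_{C^0}$. The paper avoids this by first bounding $|R_{0,j}|$ and $|(D\Phi_i)^{-1}|$ pointwise in $C^0$ (the latter by Lemma~\ref{l:diffeo}), extracting the constant $C(M,\|R_0\|_{C^0},\max_i\|D\Phi_i\|_{C^0})$ in front, and then applying the improved H\"older inequality only to the product of the two Mikado factors $\Theta(\lambda'\Phi_{i_1})\,W(\lambda''\Phi_{i_2})$ (resp.\ $\Theta(\lambda''\Phi_{i_2})\,W(\lambda'\Phi_{i_1})$). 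Your version would still suffice for the application in Section~\ref{s:proof:prop}, where $\max_i\|D\Phi_i\|_{C^1}$ dependence is tolerated anyway, but to obtain the lemma with the sharper constant as stated you should peel off the bounded prefactors first, as the paper does.
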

\begin{proof}
Consider the definition \eqref{eq:r:interaction} of $R^{\rm interaction}$. We start by estimating $\|\Theta_{\mu'}^j (\lambda' \Phi_{i_1}(t))\|_{C^1}$ and $\|W_{\mu'}^k(\lambda' \Phi_{i_1}(t))\|_{C^1}$, using \eqref{eq:mikado:est:3} and the chain rule
\begin{equation}
\label{eq:bad:term}
\begin{split}
\|\Theta_{\mu'}^j (\lambda' \Phi_{i_1}(t))\|_{C^1} & \leq C(M, \max_{i=1,\dots, N} \|D \Phi_i\|_{C^0} ) \lambda' (\mu')^d, \\
\|W_{\mu'}^k (\lambda' \Phi_{i_1}(t))\|_{C^1} & \leq C(M, \max_{i=1,\dots, N} \|D \Phi_i\|_{C^0} ) \lambda' \mu'. \\
\end{split}
\end{equation}
We now estimate  $\Theta_{\mu'}^j (\lambda' \Phi_{i_1}(t) ) W_{\mu''}^k (\lambda'' \Phi_{i_2}(t) )$, using the improved H\"older inequality, Lemma \ref{p:improved:holder} and  considering $\lambda''$ as the \emph{fast} oscillation. We have
\begin{equation*}
\begin{split}
\big\| \Theta_{\mu'}^j \big(\lambda' & \Phi_{i_1}(t) \big) W_{\mu''}^k \big(\lambda'' \Phi_{i_2}(t) \big) \big\|_{L^1} \\
& \leq \| \Theta_{\mu'}^j \|_{L^1} \| W_{\mu''}^k  \|_{L^1} + \frac{1}{\lambda''} \big\| \Theta_{\mu'}^j \big(\lambda' \Phi_{i_1}(t) \big)\big\|_{C^1}  \big\|D \Phi_{i_2}(t)\big\|^{d-1}_{C^0} \big\| W_{\mu''}^k  \big\|_{L^1} \\
%\text{(by \eqref{eq:mikado:est:1} and \eqref{eq:mikado:est:2})}
%& \leq C(M) (\mu'')^{-(d-1)} + \frac{1}{\lambda''} C\Big(M, \max_{i=1,\dots, N} \|D \Phi_i\|_{C^0} \Big)
%\\
& \leq  C\Big(M, \max_{i=1,\dots, N} \|D \Phi_i\|_{C^0} \Big)  \bigg(\frac{1}{(\mu'')^{d-1}} + \frac{\lambda' (\mu')^d}{\lambda'' (\mu'')^{d-1}} \bigg),
\end{split}
\end{equation*} 
where in the last inequality we used \eqref{eq:mikado:est:1}, \eqref{eq:mikado:est:2} and \eqref{eq:bad:term}. 
%\begin{equation*}
%\big\| \Theta_{\mu'}^j \big(\lambda' \Phi_{i_1}(t) \big)\big\|_{C^1} \leq \lambda' \| \Theta_{\mu'}^j \|_{C^1} \|(D \Phi_{i_1}(t))^{-1}\|_{C^1} \leq C\Big(M, \max_{i=1,\dots, N} \|D \Phi_i\|_{C^0} \Big) \lambda' (\mu')^d. 
%\end{equation*}
A similar estimate holds for $\Theta_{\mu''}^j (\lambda'' \Phi_{i_2}(t)) W_{\mu'}^k (\lambda' \Phi_{i_1}(t))$:
\begin{equation*}
\begin{split}
\big\| \Theta_{\mu''}^j (\lambda'' & \Phi_{i_2}(t)) W_{\mu'}^k (\lambda' \Phi_{i_1}(t)) \big\|_{L^1} \\
& \leq \| \Theta_{\mu''}^j \|_{L^1} \|  W_{\mu'}^k \|_{L^1} + \frac{1}{\lambda''} \|  W_{\mu'}^k (\lambda' \Phi_{i_1}(t)) \|_{C^1} \|D \Phi_{i_2}(t)\|^{d-1}_{C^0} \| \Theta_{\mu''}^j \|_{L^1} \\
& \leq  C\Big(M, \max_{i=1,\dots, N} \|D \Phi_i\|_{C^0} \Big) \bigg( \frac{1}{(\mu')^{d-1}} + \frac{\lambda' \mu'}{\lambda''} \bigg).
\end{split}
\end{equation*}
Therefore
\begin{equation*}
\begin{split}
\|& R^{\rm interaction}(t)\|_{L^1} \\
& \leq C \Big( M, \|R_0\|_{C^0}, \max_{i=1,\dots, N} \|D \Phi_i\|_{C^0} \Big) \cdot \\
& \qquad \cdot \sum_{j,k=1}^d \bigg[   
\big\| \Theta_{\mu'}^j \big(\lambda'  \Phi_{i_1}(t) \big) W_{\mu''}^k \big(\lambda'' \Phi_{i_2}(t) \big) \big\|_{L^1}
+ \big\| \Theta_{\mu''}^j (\lambda''  \Phi_{i_2}(t)) W_{\mu'}^k (\lambda' \Phi_{i_1}(t)) \big\|_{L^1} \bigg] \\
& \leq C \big(M, \|R_0\|_{C^0}, \max_{i=1,\dots, N} \|D \Phi_i\|_{C^0} \big)  \bigg( \frac{1}{(\mu')^{d-1}} + \frac{1}{(\mu'')^{d-1}} + \frac{\lambda' \mu'}{\lambda''} + \frac{\lambda' (\mu')^d}{\lambda'' (\mu'')^{d-1}} \bigg). 
\end{split}
\end{equation*}
\end{proof}

\begin{lemma}
\label{l:rflow}
For every $t \in [0,1]$, 
\begin{equation*}
\|R^{\rm flow}(t)\|_{L^1(\T^d)} \leq \|R_0\|_{C^0} \max_{i=1,\dots, N} \big\| {\rm Id} - D\Phi_i(t)^{-1} \big\|_{C^0(\supp \alpha_i \times \T^d)}. 
\end{equation*}
\end{lemma}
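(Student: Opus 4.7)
The plan is to argue directly from the explicit definition \eqref{eq:r:flow} of $R^{\rm flow}$. At a fixed time $t \in [0,1]$, at most two of the cutoffs $\alpha_i$ are non-zero, namely one with odd index $i_1$ and one with even index $i_2$, and these are precisely the two indices for which $t \in \supp \alpha_i$. So I may rewrite $R^{\rm flow}(t)$ as a sum of two terms involving the matrices ${\rm Id} - (D\Phi_{i_1}(t))^{-1}$ and ${\rm Id} - (D\Phi_{i_2}(t))^{-1}$ acting on the vector $R_0(t,x)$.

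Next I will pass to pointwise estimates. Using $|\psi(t)| \leq 1$, and the elementary bound $|A v| \leq |A|_{\mathrm{op}} |v|$ for a matrix $A$ and a vector $v$, I obtain
\begin{equation*}
|R^{\rm flow}(t,x)| \,\leq\, \bigl[\alpha_{i_1}^2(t) + \alpha_{i_2}^2(t)\bigr] \max_{i=1,\dots,N} \bigl\|{\rm Id} - (D\Phi_i(t))^{-1}\bigr\|_{C^0(\supp \alpha_i \times \T^d)} \, |R_0(t,x)|.
\end{equation*}
Because the $\{\alpha_i^2\}$ form a partition of unity (so $\alpha_{i_1}^2(t)+\alpha_{i_2}^2(t)=1$ at this particular $t$), the bracket equals $1$.

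Finally, I integrate over $x \in \T^d$ and bound $\|R_0(t)\|_{L^1(\T^d)} \leq \|R_0\|_{C^0}$, using the normalization $|\T^d|=1$, to conclude. The step is essentially bookkeeping; there is no real obstacle, the only point worth checking carefully is that the restriction of the $C^0$ norm of ${\rm Id}-(D\Phi_i)^{-1}$ to $\supp \alpha_i \times \T^d$ is available precisely because the corresponding matrix only appears multiplied by $\alpha_i^2(t)$, so we may ignore values of $(D\Phi_i(t))^{-1}$ outside $t \in \supp \alpha_i$.
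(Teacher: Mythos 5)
Your argument is correct and is exactly the computation the paper leaves implicit in its one-line proof ("follows immediately from the definition"): pointwise bound via the operator norm, use of $\psi^2\le 1$ and $\alpha_{i_1}^2+\alpha_{i_2}^2=1$, then $\|R_0(t)\|_{L^1(\T^d)}\le\|R_0\|_{C^0}$ using $|\T^d|=1$. There is no difference in approach to report.
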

\begin{proof}
The proof follows immediately from the definition of $R^{\rm flow}$.
\end{proof}

\begin{lemma}
\label{l:rpsi}
For every $t \in [0,1]$,
\begin{equation*}
\|R^\psi(t) \|_{L^1(\T^d)} \leq \delta/4.
\end{equation*}
\end{lemma}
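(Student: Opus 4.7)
The plan is a direct case analysis based on the definition \eqref{eq:psi} of the cutoff $\psi$. From the definition \eqref{eq:r:psi}, at each time $t$ we have the pointwise identity
\begin{equation*}
\|R^\psi(t)\|_{L^1(\T^d)} = |1 - \psi^2(t)| \, \|R_0(t)\|_{L^1(\T^d)}.
\end{equation*}
Since $\psi(t) \in [0,1]$, the factor $|1 - \psi^2(t)|$ is always bounded by $1$, and it vanishes as soon as $\psi(t) = 1$.

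The argument then splits into two cases. If $\|R_0(t)\|_{L^1} \geq \delta/4$, then by \eqref{eq:psi} we have $\psi(t) = 1$, whence $1 - \psi^2(t) = 0$ and $\|R^\psi(t)\|_{L^1} = 0$. Otherwise $\|R_0(t)\|_{L^1} < \delta/4$; in this regime we simply bound $|1 - \psi^2(t)| \leq 1$ and obtain
\begin{equation*}
\|R^\psi(t)\|_{L^1(\T^d)} \leq \|R_0(t)\|_{L^1(\T^d)} < \delta/4.
\end{equation*}
Combining the two cases gives the claimed estimate. There is no real obstacle here: the cutoff $\psi$ was precisely designed so that $R^\psi$ is controlled by $R_0$ only where the latter is already small, and the quantitative threshold $\delta/4$ in \eqref{eq:psi} was chosen to give the bound $\delta/4$ directly.
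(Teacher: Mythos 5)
Your proof is correct and is essentially the paper's argument, just phrased as an explicit two-case split instead of the paper's one-line contrapositive (``if $\psi^2(t)\neq 1$, then by \eqref{eq:psi} $\|R_0(t)\|_{L^1}\leq\delta/4$''). No gap.
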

\begin{proof}
If $\psi^2(t) \neq 1$, then, by \eqref{eq:psi}, $\|R_0(t)\|_{L^1} \leq \delta/4$ and thus the conclusion follows.
\end{proof}

\begin{lemma}
\label{l:rquadr}
For every $t \in [0,1]$, 
\begin{equation*}
\|R^{\rm quadr}(t)\|_{L^1(\T^d)} \leq C \Big(M, \|R_0\|_{C^2}, \max_{i=1,\dots, N} \|D \Phi_i\|_{C^1} \Big) \bigg( \frac{1}{\lambda'} + \frac{1}{\lambda''} \bigg).
\end{equation*}
\end{lemma}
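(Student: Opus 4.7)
The plan is to estimate $R^{\rm quadr}(t)$ in $L^1$ by applying the improved antidivergence estimate of Lemma \ref{p:antidiv:transport} (with $k=0$, $p=1$) to each of the terms appearing in the definition \eqref{eq:r:quadr}.

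First I would check that Lemma \ref{p:antidiv:transport} is indeed applicable. Each term inside the summation over $j$ has the structure $\mathcal{R}\bigl(f \cdot (g_\lambda \circ \Phi)\bigr)$ with
\begin{equation*}
f = \nabla R_{0,j}\cdot [D\Phi_i(t)]^{-1}, \qquad g = \Theta_\mu^j W_\mu^j - e_j,
\end{equation*}
and $(\Phi,\lambda,\mu)$ equal to either $(\Phi_{i_1}(t),\lambda',\mu')$ or $(\Phi_{i_2}(t),\lambda'',\mu'')$. By \eqref{eq:mikado:eq} we have $\fint_{\T^d}(\Theta_\mu^j W_\mu^j - e_j) = 0$, so the zero-mean hypothesis of Lemma \ref{p:antidiv:transport} is satisfied.

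Specializing Lemma \ref{p:antidiv:transport} with $k=0$ and $p=1$ gives, for each such term,
\begin{equation*}
\bigl\|\mathcal{R}(f\cdot(g_\lambda\circ\Phi))\bigr\|_{L^1}
\leq \frac{C}{\lambda}\,\|f\|_{C^1}\,\|D\Phi\|_{C^1}^{d-1}\,\|g\|_{L^1}.
\end{equation*}
I would then estimate the three factors separately. For $\|g\|_{L^1}$ the bound is an immediate consequence of \eqref{eq:mikado:est:1}, since $\|\Theta_\mu^j W_\mu^j\|_{L^1}\le M/4$ and $\|e_j\|_{L^1}=1$, yielding $\|g\|_{L^1}\le M/4+1$. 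For $\|f\|_{C^1}$, write
\begin{equation*}
\|f\|_{C^1} \leq \|\nabla R_{0,j}\|_{C^1}\,\bigl\|[D\Phi_i(t)]^{-1}\bigr\|_{C^1}
\leq \|R_0\|_{C^2}\,\bigl\|[D\Phi_i(t)]^{-1}\bigr\|_{C^1},
\end{equation*}
and then invoke Lemma \ref{l:diffeo} to bound $\|[D\Phi_i(t)]^{-1}\|_{C^1}$ by a constant multiple of $\|D\Phi_i\|_{C^1}^{d-1}$. The factor $\|D\Phi\|_{C^1}^{d-1}$ is also dominated by $\max_i\|D\Phi_i\|_{C^1}^{d-1}$.

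Finally, since $\psi^2(t),\alpha_{i_1}^2(t),\alpha_{i_2}^2(t)\in[0,1]$ and there are at most $2d$ such terms (summed over $j=1,\dots,d$ and over the two active indices $i_1,i_2$), a triangle inequality yields
\begin{equation*}
\|R^{\rm quadr}(t)\|_{L^1(\T^d)} \leq C\Big(M,\|R_0\|_{C^2},\max_{i=1,\dots,N}\|D\Phi_i\|_{C^1}\Big)\bigg(\frac{1}{\lambda'}+\frac{1}{\lambda''}\bigg),
\end{equation*}
as required. There is no real obstacle here, beyond keeping track of the dependencies: the only nontrivial point is that the $C^1$-norm of the matrix inverse $[D\Phi_i]^{-1}$ produces the extra power $(d-1)$ in the constant, which is absorbed into $\max_i\|D\Phi_i\|_{C^1}$.
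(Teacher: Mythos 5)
Your argument is correct and follows the same route as the paper: apply the improved antidivergence operator estimate of Lemma \ref{p:antidiv:transport} (via the vector-valued Remark) with $k=0$, $p=1$ to each term in \eqref{eq:r:quadr}, use Lemma \ref{l:diffeo} to control $\|[D\Phi_i]^{-1}\|_{C^1}$, and bound $\|\Theta^j_\mu W^j_\mu - e_j\|_{L^1}$ by \eqref{eq:mikado:est:1}. You merely spell out more explicitly how the $[D\Phi_i]^{-1}$ factor and the zero-mean hypothesis are handled, where the paper compresses this into one line.
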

\begin{proof}
$R^{\rm quadr}(t)$ is defined in \eqref{eq:r:quadr} using Lemma \ref{p:antidiv:transport}. Applying the bounds provided by such proposition, with $k=0$ and $p=1$, we get
\begin{equation*}
\begin{split}
\|& R^{\rm quadr}(t)\|_{L^1} \\
& \leq C \Big( \|R_0\|_{C^2}, \max_{i=1,\dots, N} \|D \Phi_i\|_{C^1} \Big) \bigg( \frac{1}{\lambda'} \sum_{j=1}^d \|\Theta_{\mu'}^j W_{\mu'}^j - e_j\|_{L^1} + \frac{1}{\lambda''} \sum_{j=1}^d \|\Theta_{\mu''}^j W_{\mu''}^j - e_j\|_{L^1}\bigg) \\
& \text{(by \eqref{eq:mikado:est:1})} \\
& \leq C \Big(M, \|R_0\|_{C^2}, \max_{i=1,\dots, N} \|D \Phi_i\|_{C^1} \Big) \bigg( \frac{1}{\lambda'} + \frac{1}{\lambda''} \bigg).  
\end{split} 
\end{equation*}
\end{proof}

%%\newpage
\subsection{Definition and estimates for $R^{\rm transport}$}
\label{ss:r:transport}

In this section we define and estimate the vector fields $R^{\rm transport}$ so that \eqref{eq:rtransport} holds. First of all, we want to compute more explicitly $\partial_t (\vartheta(t) + \vartheta_c(t)) + \div((\vartheta(t)+\vartheta_c(t)) u_0(t))$, for every fixed time $t$. We can use the local form \eqref{eq:only:two:cutoff} for $\vartheta(t)$ and $w(t)$. 
We have
\begin{equation}
\label{eq:transport:explicit}
\begin{split}
\partial_t &  (\vartheta + \vartheta_c) + \div ((\vartheta + \vartheta_c)u_0) \\
& =  \vartheta_c'(t) +  \sum_{j=1}^d \bigg\{ A^j_1(t,x) \Theta_{\mu'}^j \big(\lambda' \Phi_{i_1} \big) +  A^j_2(t,x)  \Theta_{\mu''}^j \big(\lambda'' \Phi_{i_2} \big) \\
& \qquad \qquad \qquad \qquad + \lambda' B^j_1(t,x) \cdot \Big[ \partial_t \Phi_{i_1} + (u_0 \cdot \nabla) \Phi_{i_1} \Big] \\
& \qquad \qquad \qquad \qquad + \lambda'' B^j_2(t,x) \cdot \Big[ \partial_t \Phi_{i_2} + (u_0 \cdot \nabla) \Phi_{i_2} \Big] \bigg\} \\
& = \vartheta_c'(t) +  \sum_{j=1}^d \bigg\{ A^j_1(t,x) \Theta_{\mu'}^j \big(\lambda' \Phi_{i_1} \big) + A^j_2(t,x)  \Theta_{\mu''}^j \big(\lambda'' \Phi_{i_2} \big) \bigg\},
\end{split}
\end{equation}
where
\begin{equation*}
\begin{split}
A_1^j & := \eta \, \bigg[\psi' \alpha_{i_1} R_{0,j} + \psi \alpha_{i_1}' R_{0,j} + \psi\alpha_{i_1} \Big( \partial_t R_{0,j} + \nabla R_{0,j} \cdot u_0 \Big) \bigg], \\
A_2^j & := \eta \,  \bigg[\psi' \alpha_{i_2} R_{0,j} + \psi \alpha_{i_2}' R_{0,j} + \psi \alpha_{i_2} \Big( \partial_t R_{0,j} + \nabla R_{0,j} \cdot u_0 \Big) \bigg], \\
B_1^j & := \eta \, \psi  \alpha_{i_1} R_{0,j} \, \nabla \Theta_{\mu'}^j(\lambda' \Phi_{i_1}), \\
B_2^j & := \eta \, \psi \alpha_{i_2} R_{0,j} \, \nabla \Theta_{\mu''}^j(\lambda'' \Phi_{i_2})
\end{split}
\end{equation*}
and we used \eqref{eq:inverse:flow}.  We now continue the chain of equalities in \eqref{eq:transport:explicit}, by adding and subtracting the mean value of each term in the summations over $j$, as follows:
\begin{equation}
\label{eq:transport:explicit:2}
\begin{split}
\partial_t &  (\vartheta + \vartheta_c) + \div ((\vartheta + \vartheta_c)u_0) \\
& = \sum_{j=1}^d \Bigg\{\bigg(  A^j_1(t,x) \Theta_{\mu'}^j \big(\lambda' \Phi_{i_1} \big) - \fint_{T^d} A^j_1(t,x) \Theta_{\mu'}^j \big(\lambda' \Phi_{i_1} \big) dx \bigg) \\ 
& \qquad \qquad \qquad + \bigg( A^j_2(t,x)  \Theta_{\mu''}^j \big(\lambda'' \Phi_{i_2} \big) - \fint_{T^d} A^j_2(t,x)  \Theta_{\mu''}^j \big(\lambda'' \Phi_{i_2} \big) dx \bigg) \Bigg\}  \\
& \quad + \vartheta_c'(t) + \sum_{j=1}^d \Bigg\{ \fint_{T^d} A^j_1(t,x) \Theta_{\mu'}^j \big(\lambda' \Phi_{i_1} \big) dx + \fint_{T^d} A^j_2(t,x)  \Theta_{\mu''}^j \big(\lambda'' \Phi_{i_2} \big) dx \Bigg\} \\
& =  \sum_{j=1}^d \Bigg\{\bigg(  A^j_1(t,x) \Theta_{\mu'}^j \big(\lambda' \Phi_{i_1} \big) - \fint_{T^d} A^j_1(t,x) \Theta_{\mu'}^j \big(\lambda' \Phi_{i_1} \big) dx \bigg) \\ 
& \qquad \qquad \qquad + \bigg( A^j_2(t,x)  \Theta_{\mu''}^j \big(\lambda'' \Phi_{i_2} \big) - \fint_{T^d} A^j_2(t,x)  \Theta_{\mu''}^j \big(\lambda'' \Phi_{i_2} \big) dx \bigg) \Bigg\}.
\end{split}
\end{equation}
The last equality is a consequence of the fact that 
\begin{equation*}
\fint \Big(\partial_t (\vartheta + \vartheta_c) + \div ((\vartheta + \vartheta_c)u_0)  \Big) dx = 0.
\end{equation*}
We now observe that each term in the last line in \eqref{eq:transport:explicit:2} has the form $f \, (g_\lambda \circ \Phi) - \fint f \, (g_\lambda \circ \Phi) dx$ for
\begin{equation*}
\begin{aligned}
f & = A_1^j & \text{ or } && f & = A_2^j, \\
\Phi & = \Phi_{i_1}(t) & \text{ or } && \Phi & = \Phi_{i_2}(t), \\
g & = \Theta_{\mu'}^j & \text{ or } && g & = \Theta_{\mu''}^j, \\
\lambda & = \lambda' & \text{ or } && \lambda & = \lambda ''.
\end{aligned}
\end{equation*} 
Since $\Theta_\mu^j$ has zero mean value (see Proposition \ref{p:mikado}), we can apply Lemma \ref{p:antidiv:transport} and define
\begin{equation}
\label{eq:def:rtransport}
\begin{split}
& R^{\rm transport}(t) \\
& := \sum_{j=1}^d \Bigg\{ \mathcal{R} \bigg(  A^j_1(t,x) \Theta_{\mu'}^j \big(\lambda' \Phi_{i_1} \big) - \fint_{T^d} A^j_1(t,x) \Theta_{\mu'}^j \big(\lambda' \Phi_{i_1} \big) dx \bigg) \\
& \qquad \qquad \qquad + \mathcal{R} \bigg( A^j_2(t,x)  \Theta_{\mu''}^j \big(\lambda'' \Phi_{i_2} \big) - \fint_{T^d} A^j_2(t,x)  \Theta_{\mu''}^j \big(\lambda'' \Phi_{i_2} \big) dx \bigg) \Bigg\}.
\end{split}
\end{equation}

\begin{lemma}
\label{l:r:transport}
For every $t \in [0,1]$, it holds
\begin{equation*}
\|R^{\rm transport}(t)\|_{L^1(\T^d)} \leq C \big(M, \eta, \delta, \tau, \|R_0\|_{C^2}, \|u_0\|_{C^1}, \max_{i=1,\dots, N} \|D \Phi_i\|_{C^1} \big) \bigg( \frac{1}{\lambda'} + \frac{1}{\lambda''} \bigg). 
\end{equation*}
\end{lemma}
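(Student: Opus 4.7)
The key observation is that the crucial cancellation has already been carried out in passing from \eqref{eq:transport:explicit} to \eqref{eq:transport:explicit:2}: the time and transport derivatives of $\Theta_{\mu'}^j(\lambda'\Phi_{i_1})$ and $\Theta_{\mu''}^j(\lambda''\Phi_{i_2})$, which a priori would produce prefactors of size $\lambda', \lambda''$ via the chain rule, multiply precisely $\partial_t \Phi_i + (u_0\cdot\nabla)\Phi_i$, which vanishes by \eqref{eq:inverse:flow}. The surviving coefficients $A_1^j, A_2^j$ in \eqref{eq:def:rtransport} are therefore $\lambda$-free, so the $1/\lambda$ gain can be harvested directly from the improved antidivergence operator $\mathcal{R}$.

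Concretely, I would apply Lemma \ref{p:antidiv:transport} with $k=0$, $p=1$ to each of the $2d$ summands in \eqref{eq:def:rtransport}. For a typical summand this yields
$$\Bigl\| \mathcal{R}\bigl( A_1^j \, \Theta_{\mu'}^j(\lambda'\Phi_{i_1}) - \textstyle\fint A_1^j \, \Theta_{\mu'}^j(\lambda'\Phi_{i_1})\,dx \bigr)\Bigr\|_{L^1} \leq \frac{C}{\lambda'}\,\|A_1^j(t)\|_{C^1(\T^d)}\,\|D\Phi_{i_1}(t)\|_{C^1}^{d-1}\,\|\Theta_{\mu'}^j\|_{L^1(\T^d)},$$
and analogously for the $\lambda''$ summand. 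The factor $\|\Theta_{\mu'}^j\|_{L^1}$ is bounded by $M$ via \eqref{eq:mikado:est:1}, and the $D\Phi_i$-factor is absorbed into $\max_i\|D\Phi_i\|_{C^1}$.

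It remains to bound $\|A_1^j(t)\|_{C^1(\T^d)}$ and $\|A_2^j(t)\|_{C^1(\T^d)}$ uniformly in $t$. Expanding
$$A_1^j = \eta\bigl[\psi'\alpha_{i_1} R_{0,j} + \psi\alpha_{i_1}' R_{0,j} + \psi\alpha_{i_1}\bigl(\partial_t R_{0,j} + \nabla R_{0,j}\cdot u_0\bigr)\bigr]$$
and taking one spatial derivative, one sees that $\|A_1^j(t)\|_{C^1(\T^d)}$ is controlled by a product of three kinds of factors: the purely temporal cutoffs $\psi, \psi', \alpha_{i_1}, \alpha_{i_1}'$, which contribute constants depending only on $\delta$ (controlling $\|\psi\|_{C^1([0,1])}$) and on $\tau$ (since $\|\alpha_{i_1}\|_{C^1([0,1])}\lesssim \tau^{-1}$ as $\alpha_{i_1}$ has support of length $\sim\tau$); the scalar $R_{0,j}$ and its first time and space derivatives, all controlled by $\|R_0\|_{C^2}$; and the product $\nabla R_{0,j}\cdot u_0$, which after one further spatial derivative is bounded by $\|R_0\|_{C^2}\|u_0\|_{C^1}$. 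The same accounting applies to $A_2^j$. Summing over $j=1,\dots,d$ and over the two active indices $i_1, i_2$ gives the claimed estimate with constants depending exactly on $M,\eta,\delta,\tau,\|R_0\|_{C^2},\|u_0\|_{C^1}$ and $\max_i\|D\Phi_i\|_{C^1}$. The proof is essentially a bookkeeping exercise; the main conceptual point is the transport cancellation, which is already encoded in the definition of $R^{\rm transport}$, so no further trick is required.
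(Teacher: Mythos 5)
Your proposal is correct and follows essentially the same route as the paper: it applies Lemma \ref{p:antidiv:transport} with $k=0$, $p=1$ to each summand in \eqref{eq:def:rtransport}, bounds $\|\Theta_{\mu'}^j\|_{L^1}$, $\|\Theta_{\mu''}^j\|_{L^1}$ via \eqref{eq:mikado:est:1}, and controls $\|A_1^j(t)\|_{C^1(\T^d)}$, $\|A_2^j(t)\|_{C^1(\T^d)}$ by $C(\eta,\delta,\tau,\|R_0\|_{C^2},\|u_0\|_{C^1})$ using $|\psi'|\leq C(\delta,\|R_0\|_{C^1})$ and $|\alpha_i'|\leq C(\tau)$. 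The bookkeeping and the identification of the transport cancellation as the already-built-in conceptual step are exactly as in the paper.
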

\begin{proof}
First of all, we observe that
\begin{equation*}
|\psi'(t)| \leq C\big(\delta, \|R_0\|_{C^1} \big), \qquad |\alpha_i'(t)| \leq C(\tau) \text{ for all } i=1,\dots, N.
\end{equation*}
Therefore
\begin{equation*}
\|A_1^j(t)\|_{C^1(\T^d)}, \|A_2^j(t)\|_{C^1(\T^d)} \leq C \big( \eta, \delta, \tau, \|R_0\|_{C^2}, \|u_0\|_{C^1}  \big).
\end{equation*}
We defined $R^{\rm transport}$ in \eqref{eq:def:rtransport} using the antidivergence operator provided by Lemma \ref{p:antidiv:transport}. We can thus apply the bounds provided by such proposition, with $k=0$ and $p=1$, to get
\begin{equation*}
\begin{split}
\|& R^{\rm transport}(t)\|_{L^1} \\
& \leq \sum_{j=1}^d \Bigg\{ \frac{\|A_1^j(t)\|_{C^1(\T^d)} \|D\Phi_{i_1}\|_{C^1}^{d-1} \|\Theta_{\mu'}^j\|_{L^1}}{\lambda'} + \frac{\|A_2^j(t)\|_{C^1(\T^d)} \|D\Phi_{i_2}\|_{C^1}^{d-1} \|\Theta_{\mu''}^j\|_{L^1}}{\lambda''} \Bigg\} \\
& \leq C \big(M, \eta, \delta, \tau, \|R_0\|_{C^2}, \|u_0\|_{C^1}, \max_{i=1,\dots, N} \|D \Phi_i\|_{C^1} \big) \bigg( \frac{1}{\lambda'} + \frac{1}{\lambda''} \bigg),
\end{split}
\end{equation*}
where in the last line we used \eqref{eq:mikado:est:1}. 
\end{proof}

%%\newpage
\subsection{Estimates for $R^{\rm Nash}$ and $R^{\rm corr}$}

In this section we estimate $R^{\rm Nash}$ and $R^{\rm corr}$. 

\begin{lemma}
\label{l:r:nash}
For every $t \in [0,1]$, 
\begin{equation*}
\|R^{\rm Nash}(t)\|_{L^1(\T^d)} \leq C\big(\|\rho_0(t)\|_{C^0}, \max_{i=1,\dots, N} \|D \Phi_i\|_{C^0} \big) 
 \bigg( \frac{1}{(\mu')^{d-1}} + \frac{1}{(\mu'')^{d-1}} \bigg). 
\end{equation*}
\end{lemma}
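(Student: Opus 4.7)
The plan is to use the pointwise–in–time local form \eqref{eq:only:two:cutoff} for $w(t)$, estimate $R^{\rm Nash}(t) = \rho_0(t) w(t)$ by $\|\rho_0(t)\|_{C^0} \|w(t)\|_{L^1}$, and then exploit the $L^1$-smallness of the Mikado fields coming from the concentration parameter, namely the second half of \eqref{eq:mikado:est:2}: $\|W_\mu^j\|_{L^1} \leq M \mu^{-(d-1)}$.

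More concretely, I would fix $t \in [0,1]$, recall that at most one odd index $i_1$ and one even index $i_2$ contribute, and write
\begin{equation*}
\rho_0(t) w(t) = \frac{\psi(t)}{\eta} \sum_{j=1}^d \Bigl\{ \alpha_{i_1}(t) \rho_0(t) (D\Phi_{i_1}(t))^{-1} W_{\mu'}^j(\lambda'\Phi_{i_1}(t)) + \alpha_{i_2}(t) \rho_0(t) (D\Phi_{i_2}(t))^{-1} W_{\mu''}^j(\lambda''\Phi_{i_2}(t)) \Bigr\}.
\end{equation*}
Pulling the $L^\infty$ factors $\rho_0$ and $(D\Phi_i)^{-1}$ out and using $|\psi|, |\alpha_i| \leq 1$, the problem reduces to bounding $\|W_\mu^j(\lambda\,\Phi_i(t))\|_{L^1(\T^d)}$ for $(\mu,\lambda) \in \{(\mu',\lambda'),(\mu'',\lambda'')\}$.

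For this, since $\Phi_i(t)$ is a measure-preserving diffeomorphism, Lemma \ref{l:diffeo:estimate} gives $\|W_\mu^j(\lambda\,\Phi_i(t))\|_{L^1} = \|W_\mu^j(\lambda\cdot)\|_{L^1}$, which in turn equals $\|W_\mu^j\|_{L^1}$ by $\Z^d$-periodicity (since $\lambda \in \N^*$). The estimate \eqref{eq:mikado:est:2} then yields $\|W_\mu^j\|_{L^1} \leq M \mu^{-(d-1)}$. Bounding $\|(D\Phi_i(t))^{-1}\|_{C^0}$ by Lemma \ref{l:diffeo} in terms of $\|D\Phi_i\|_{C^0}^{d-1}$, and summing the $2d$ terms over $j$ and the two active indices, one gets
\begin{equation*}
\|R^{\rm Nash}(t)\|_{L^1} \leq C\bigl(\|\rho_0(t)\|_{C^0}, \max_{i}\|D\Phi_i\|_{C^0}\bigr) \Bigl(\tfrac{1}{(\mu')^{d-1}} + \tfrac{1}{(\mu'')^{d-1}}\Bigr),
\end{equation*}
which is the claim. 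There is no substantive obstacle: this term is deliberately the simplest of the error pieces, since no antidivergence, no improved Hölder, and no oscillation parameter is needed; the concentration scales $\mu', \mu''$ alone suffice to render $w(t)$ small in $L^1$, and multiplying by the bounded density $\rho_0$ preserves that smallness.
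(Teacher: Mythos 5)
Your proposal is correct and follows essentially the same approach as the paper's proof: bound $\|R^{\rm Nash}(t)\|_{L^1}$ by $\|\rho_0(t)\|_{C^0}\|w(t)\|_{L^1}$, absorb the factors $\psi/\eta$, $\alpha_i$, and $(D\Phi_i)^{-1}$ into the constant, and invoke \eqref{eq:mikado:est:2} to get the $\mu^{-(d-1)}$ decay. You merely spell out a couple of steps the paper leaves tacit, namely that $\|W_\mu^j(\lambda\Phi_i(t))\|_{L^1}=\|W_\mu^j\|_{L^1}$ via the measure-preserving property of $\Phi_i(t)$ together with periodicity, and that $(D\Phi_i)^{-1}$ is controlled by Lemma \ref{l:diffeo}.
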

\begin{proof}
We have
\begin{equation*}
\begin{split}
\|R^{\rm Nash}(t)\|_{L^1}
& = \|\rho_0(t) w(t)\|_{L^1} \\
& \leq \|\rho_0(t)\|_{C^0} \|w(t)\|_{L^1} \\
& \leq C\big(\eta, \|\rho_0(t)\|_{C^0}, \max_{i=1,\dots, N} \|D \Phi_i\|_{C^0} \big) \sum_{j=1}^d \bigg( \|W_{\mu'}^j\|_{L^1} + \|W_{\mu''}^j\|_{L^1} \bigg) \\
\text{(by \eqref{eq:mikado:est:2})}
& \leq C\big(\eta, \|\rho_0(t)\|_{C^0}, \max_{i=1,\dots, N} \|D \Phi_i\|_{C^0} \big) \bigg( \frac{1}{(\mu')^{d-1}} + \frac{1}{(\mu'')^{d-1}} \bigg).  \ifjems \qedhere \fi
\end{split}
\end{equation*}
\end{proof}

\begin{lemma}
\label{l:r:corr}
For every $t \in [0,1]$, 
\begin{equation*}
\|R^{\rm corr}(t)\|_{L^1(\T^d)} \leq  C(M, \eta, \|R_0\|_{C^1}, \max_{i=1, \dots, N} \|D \Phi_i\|_{C^0} ) \bigg(\frac{1}{\lambda'} + \frac{1}{\lambda''} \bigg). 
\end{equation*}
\end{lemma}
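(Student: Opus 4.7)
The plan is to exploit the fact that $\vartheta_c(t)$ is a \emph{constant} in the spatial variable (it is defined in \eqref{eq:perturbation} as the negative of the spatial mean value of $\vartheta(t,\cdot)$), so it factors cleanly out of the $L^1$ norm. Specifically, I would write
\begin{equation*}
\|R^{\rm corr}(t)\|_{L^1(\T^d)} = \|\vartheta_c(t)\,w(t)\|_{L^1(\T^d)} = |\vartheta_c(t)|\,\|w(t)\|_{L^1(\T^d)} \leq |\vartheta_c(t)|\,\|w(t)\|_{C^0(\T^d)},
\end{equation*}
reducing the problem to two already-proved pointwise-in-time estimates.

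Next, I would apply Lemma \ref{l:vartheta:c} to bound the first factor by
\begin{equation*}
|\vartheta_c(t)| \leq C\Big(M,\eta,\|R_0\|_{C^1},\max_i \|D\Phi_i\|_{C^0}\Big)\bigg(\frac{1}{\lambda'} + \frac{1}{\lambda''}\bigg),
\end{equation*}
and Lemma \ref{l:lp:w} together with Lemma \ref{l:diffeo} (at $k=0$) to control the second factor by
\begin{equation*}
\|w(t)\|_{C^0(\T^d)} \leq \frac{M}{2\eta} \max_i \|(D\Phi_i)^{-1}\|_{C^0} \leq C\Big(M,\eta,\max_i \|D\Phi_i\|_{C^0}\Big).
\end{equation*}
Multiplying these two bounds yields precisely the claimed estimate. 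All the constant-dependencies line up with what is asserted, since $\|D\Phi_i\|_{C^0}^{d-1}$ is absorbed into the $\max_i\|D\Phi_i\|_{C^0}$-dependence of the constant.

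There is really no obstacle here: this is the easiest of the error-term estimates because $\vartheta_c$ carries no spatial oscillation, so neither the improved Hölder inequality nor the improved antidivergence machinery is needed. The smallness in $1/\lambda'+1/\lambda''$ is inherited entirely from the mean-value cancellation already captured by Lemma \ref{l:vartheta:c} (which itself is a direct application of the stationary phase bound in Lemma \ref{l:mean:value}).
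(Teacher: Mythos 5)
Your proposal is correct and follows exactly the paper's argument: factor out the spatial constant $\vartheta_c(t)$, bound $\|w(t)\|_{L^1}\le\|w(t)\|_{C^0}$, and then invoke Lemma \ref{l:vartheta:c} and Lemma \ref{l:lp:w}. The only cosmetic addition is your explicit mention of Lemma \ref{l:diffeo} to justify absorbing $\|(D\Phi_i)^{-1}\|_{C^0}$ into the constant, which the paper leaves implicit.
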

\begin{proof}
%We estimate separately each term in the definition \eqref{eq:rlin:rcorr} of $R^{\rm corr}$.
%
%\bigskip
%\noindent \textit{1. Estimate on $\rho_0 w_c$.} By the classical H\"older inequality, 
%\begin{equation*}
%\begin{split}
%\|\rho_0(t) w_c(t)\|_{L^1} 	& \leq \|\rho_0\|_{C^0} \|w_c(t)\|_{L^1} \\
%								& \leq \|\rho_0\|_{C^0} \|w_c(t)\|_{C^0} \\
%\text{(by Lemma \ref{l:lp:wc})}
%								& \leq C( M, \eta, \|\rho_0\|_{C^0}, \max_{i=1, \dot, N} \|D \Phi_i\|_{C^2}) \bigg( \frac{1}{\lambda'} + \frac{1}{\lambda''} \bigg).
%\end{split}
%\end{equation*}
%
%
%\bigskip
%\noindent \textit{2. Estimate on $\vartheta w_c$.} We use Lemma \ref{l:lp:vartheta} and Lemma \ref{l:lp:wc}:
%\begin{equation*}
%\begin{split}
%\|\vartheta(t) w_c(t)\|_{L^1}  & \leq \|\vartheta(t)\|_{L^1} \|w_c(t)\|_{C^0} \\
%& \leq  C(M, \eta, \|R_0\|_{C^1}, \max_{i=1, \dots, N} \|D \Phi_i\|_{C^2}) \bigg( \frac{1}{\lambda'} + \frac{1}{\lambda''} \bigg).
%\end{split}
%\end{equation*}

%\bigskip
%\noindent \textit{3. Estimate on $\vartheta_c w$.} 
We use Lemma \ref{l:vartheta:c} and Lemma \ref{l:lp:w}:
\begin{equation*}
\begin{split}
\|\vartheta_c(t) w(t)\|_{L^1}
& = |\vartheta_c(t)| \|w(t)\|_{L^1}  \\
& \leq |\vartheta_c(t)| \|w(t)\|_{C^0}  \\
& \leq C(M, \eta, \|R_0\|_{C^1}, \max_{i=1, \dots, N} \|D \Phi_i\|_{C^0} ) \bigg(\frac{1}{\lambda'} + \frac{1}{\lambda''} \bigg). \ifjems \qedhere \fi
\end{split}
\end{equation*}
%
%\bigskip
%\noindent \textit{4. Estimate on $\vartheta_c w_c$.} We use Lemma \ref{l:vartheta:c} and Lemma \ref{l:lp:wc}:
%\begin{equation*}
%\begin{split}
%\|\vartheta_c(t) w_c(t)\|_{L^1}
%& = |\vartheta_c(t)| \|w_c(t)\|_{L^1} \\
%& \leq |\vartheta_c(t)| \|w_c(t)\|_{C^0} \\
%& \leq C(M, \eta, \|R_0\|_{C^1}, \max_{i=1, \dots, N} \|D \Phi_i\|_{C^2} ) \bigg(\frac{1}{\lambda'} + \frac{1}{\lambda''} \bigg)^2.
%\end{split}
%\end{equation*}
\end{proof}

%%\newpage
\section{Proof of Proposition \ref{p:main}}
\label{s:proof:prop}

In this section we conclude the proof of Proposition \ref{p:main}, and thus also the proof of Theorem \ref{thm:main} and, consequently, the proof of Theorem \ref{thm:nonuniq}. We first prove that if $R_0(t) = 0$ at some time $t \in [0,1]$, then $R_1(t) = 0$. Observe that if $R_0(t) = 0$, then by Remark \ref{rmk:zero:if:zero}, 
\begin{equation*}
\rho_1(t) - \rho_0(t) = \vartheta(t) + \vartheta_c(t) = 0, \qquad u_1(t) - u_0(t) = w(t) = 0.
\end{equation*}
Moreover, by \eqref{eq:psi}, $\psi \equiv 0$ on a neighborhood of $t$ and thus $\psi(t) = \psi'(t) = 0$. 
% and  In addition, observe also that for all $s$ in a neighborhood $I$ of $t$, $\|R_0(s)\|_{L^1(\T^d)} \leq \delta/4$ and thus, again by \eqref{eq:psi}, $%\psi(s) = 0$. Thus, $\psi'(t) = 0$. 
Therefore
\begin{equation*}
\begin{aligned}
\psi(t) = 0 \quad & \Longrightarrow & R^{\rm interaction}(t) = R^{\rm flow}(t) = R^{\rm quadr}(t)  = 0, \\
\psi(t) = \psi'(t) = 0 \quad & \Longrightarrow & R^{\rm transport}(t) = 0, \\
R_0(t) = 0 \quad & \Longrightarrow & R^\psi(t) = 0, \\
w(t) = w_c(t) = 0 \quad & \Longrightarrow & R^{\rm Nash}(t) = R^{\rm corr}(t) = 0,
\end{aligned}
\end{equation*}
%Consider the definition of $R^{\rm transport}(t)$ in \eqref{eq:def:rtransport}.
and thus $R_1(t) = 0$. 

We now prove estimates \eqref{eq:dist:rho:stat}-\eqref{eq:reyn:stat}. First of all, in view of Lemma \ref{l:lp:w} and Lemma \ref{l:rflow}, we choose $\tau$ so small that
\begin{subequations}
\begin{align}
\max_{i=1,\dots, N} \|D \Phi_i^{-1}\|_{C^0 (\supp \alpha_i \times \T^d)} & \leq 2, \label{eq:dphi} \\
\|R_0\|_{C^0} \max_{i=1,\dots, N} \|{\rm Id} - D \Phi_i^{-1}\|_{C^0 (\supp \alpha_i \times \T^d)} & \leq \frac{\delta}{4}. \label{eq:dphi:id}
\end{align}
\end{subequations}
This is always possible since, by \eqref{eq:inverse:flow}, $\Phi_i(t_i, x) = x$ and thus $D \Phi_i(t_i, x) = {\rm Id}$ for every $i=1,\dots, N$. We choose also $\lambda', \mu', \lambda'', \mu''$ such that
$1 \ll \lambda' \ll \mu' \ll \lambda'' \ll \mu''$. More precisely, we set
\begin{equation*}
\lambda' = \lambda, \qquad \mu' := \lambda^\alpha, \qquad \lambda'' := \lambda^\beta, \qquad \mu'' := \lambda^\gamma,
\end{equation*}
for some
\begin{equation*}
1 < \alpha < \beta < \gamma
\end{equation*}
and $\lambda \gg 1$ to be fixed later. 

\bigskip
\noindent \textit{1. Estimate \eqref{eq:dist:rho:stat}.} If $R_0(t) = 0$, we have already seen that $\rho_1(t) = \rho_0(t)$. We can thus assume $R_0(t) \neq 0$. We have
\begin{equation*}
\begin{split}
\|\rho_1(t) - \rho_0(t)\|_{L^1}
& \leq \|\vartheta_0(t)\|_{L^1} + |\vartheta_c(t)| \\
& \text{(by Lemmas \ref{l:lp:vartheta} and \ref{l:vartheta:c})}  \\
& \leq \frac{M\eta}{2} \|R_0(t)\|_{L^1} + C \Big(M, \eta, \|R_0\|_{C^1}, \max_{i=1,\dots, N} \|D \Phi_i\|_{C^0} \Big) \bigg( \frac{1}{\lambda'} + \frac{1}{\lambda''} \bigg) \\
& \leq \frac{M\eta}{2} \|R_0(t)\|_{L^1} + C \Big(M, \eta, \|R_0\|_{C^1}, \max_{i=1,\dots, N} \|D \Phi_i\|_{C^0} \Big) \bigg( \frac{1}{\lambda} + \frac{1}{\lambda^\beta} \bigg) \\
& \leq M \eta \|R_0(t)\|_{L^1} 
\end{split}
\end{equation*}
if the constant $\lambda$ is chosen large enough.

\bigskip
\noindent \textit{2. Estimate \eqref{eq:dist:u:1:stat}.} We have
\begin{equation*}
\begin{split}
\|u_1(t) - u_0(t)\|_{C^0} 
& \leq \|w(t)\|_{C^0} \\
\text{(by Lemma \ref{l:lp:w})} 
& \leq \frac{M}{2\eta} \max_{i=1,\dots, N} \|(D \Phi_i)^{-1}\|_{C^0 (\supp \alpha_i \times \T^d)} \\
%& \qquad + C \Big(M, \eta, \max_{i=1, \dots, N} \|D \Phi_i\|_{C^2} \Big) \bigg( \frac{1}{\lambda'} + \frac{1}{\lambda''} \bigg) \\
\text{(by \eqref{eq:dphi})} 
%& \leq \frac{M}{2\eta}  \\ %+ C \Big(M, \eta, \max_{i=1, \dots, N} \|D \Phi_i\|_{C^2} \Big) \bigg( \frac{1}{\lambda} + \frac{1}{\lambda^\beta} \bigg) \\
& \leq \frac{M}{\eta}.
\end{split}
\end{equation*}
%if the constant $\lambda$ is chosen large enough. 

\bigskip
\noindent \textit{3. Estimate \eqref{eq:dist:u:2:stat}.} We have
\begin{equation*}
\begin{split}
\|u_1(t) - u_0(t)\|_{W^{1, p}} 
& \leq \|w(t)\|_{W^{1, p}}\\
& \text{(by Lemma \ref{l:w1p:w})} \\
& \leq C \Big(M, \eta,  \max_{i=1, \dots, N} \| D \Phi_i \|_{C^1} \Big) \bigg(\lambda' (\mu')^{1 -(d-1)/p} + \lambda'' (\mu'')^{1 -(d-1)/p} 
%& \qquad \qquad \qquad \qquad \qquad \qquad \qquad \quad
%+ (\mu')^{1-(d-1)/p} + (\mu'')^{1-(d-1)/p} 
\bigg) \\
& \leq C \Big(M, \eta,  \max_{i=1, \dots, N} \| D \Phi_i \|_{C^1} \Big) \bigg( \lambda^{1 + \alpha(1 - (d-1)/p)} + \lambda^{\beta + \gamma(1 - (d-1)/p)}
%& \qquad \qquad \qquad \qquad \qquad \qquad \qquad \quad
%+ \lambda^{\alpha(1 - (d-1)/p)} + \lambda^{\gamma(1 - (d-1)/p)} 
\bigg) \\
& \leq \delta,
\end{split}
\end{equation*}
if $\alpha$, $\beta, \gamma$ are chosen so that
\begin{subequations}
\label{eq:cond:1}
\begin{align}
1 + \alpha \bigg( 1 - \frac{d-1}{p} \bigg) & < 0, \label{eq:cond:alpha} \\
\beta + \gamma \bigg( 1 - \frac{d-1}{p} \bigg) & < 0 \label{eq:cond:gamma:1}
\end{align}
\end{subequations}
and $\lambda$ is large enough. %Notice that it always holds $\alpha (1 - (d-1)/p) < 0$ and $\gamma (1 - (d-1)/p) < 0$, since $p < d-1$ by assumption. 

\bigskip
\noindent \textit{4. Estimate \eqref{eq:reyn:stat}.} 
Recall the definition of $R_1$ in \eqref{eq:reynolds}.  
%If $R_0(t) = 0$, then by Remark \ref{rmk:zero:if:zero} we have also $w(t) = w_c(t) = 0$ and, 
%by \eqref{eq:psi}, $\psi(t) = 0$. Therefore
%\begin{equation*}
%\begin{aligned}
%\psi(t) = 0 \quad & \Longrightarrow & R^{\rm interaction}(t) = R^{\rm flow}(t) = R^{\rm quadr}(t) = R^{\rm transport}(t) = 0, \\
%R_0(t) = 0 \quad & \Longrightarrow & R^\psi(t) = 0, \\
%w(t) = w_c(t) = 0 \quad & \Longrightarrow & R^{\rm Nash}(t) = R^{\rm corr}(t) = 0,
%\end{aligned}
%\end{equation*}
%and thus $R_1(t) = 0$. Let us now prove that, for all other times, $\|R_0(t)\|_{L^1} \leq \delta$. 
Using Lemmas \ref{l:rinteraction}, \ref{l:rflow}, \ref{l:rpsi}, \ref{l:rquadr}, \ref{l:r:transport}, \ref{l:r:nash}, \ref{l:r:corr} and \eqref{eq:dphi:id}, we get
\begin{equation*}
\begin{split}
\|R_1(t)\|_{L^1} 
& \leq \|R^{\rm interaction}(t)\|_{L^1} + \|R^{\rm flow}(t)\|_{L^1} + \|R^\psi(t)\|_{L^1} + \|R^{\rm quadr}(t)\|_{L^1} \\
& \quad + \|R^{\rm transport}(t)\|_{L^1} + \|R^{\rm Nash}(t)\|_{L^1} + \|R^{\rm corr}(t)\|_{L^1} \\
& \leq \frac{\delta}{2} + C \Big(M, \eta, \delta, \tau, \|\rho_0\|_{C^0}, \|u_0\|_{C^1},  \|R_0\|_{C^2}, \max_{i=1,\dots, N} \|D \Phi_i\|_{C^1} \Big) \cdot \\
& \qquad \qquad \cdot \Bigg[ \frac{1}{\lambda'} + \frac{1}{\lambda''} + \frac{1}{(\mu')^{d-1}} + \frac{1}{(\mu'')^{d-1}} + \frac{\lambda' \mu'}{\lambda''} + \frac{\lambda' (\mu')^d}{\lambda'' (\mu'')^{d-1}} \Bigg] \\
& \leq \frac{\delta}{2} + C \Big(M, \eta, \delta, \tau, \|\rho_0\|_{C^0}, \|u_0\|_{C^1},  \|R_0\|_{C^2}, \max_{i=1,\dots, N} \|D \Phi_i\|_{C^1} \Big) \cdot \\
& \qquad \qquad \cdot \Bigg[ \frac{1}{\lambda} + \frac{1}{\lambda^\beta} + \frac{1}{\lambda^{\alpha(d-1)}} + \frac{1}{\lambda^{\gamma(d-1)}} + \frac{\lambda^{1 + \alpha}}{\lambda^\beta} + \frac{\lambda^{1 + \alpha d}}{\lambda^{\beta + \gamma(d-1)}} \Bigg] \\ 
& \leq \frac{\delta}{2} + \frac{\delta}{2} \\
& \leq \delta,
\end{split}
\end{equation*}
if
\begin{subequations}
\label{eq:cond:2}
\begin{align}
1 + \alpha - \beta & < 0, \label{eq:cond:beta} \\
1 + \alpha d - \beta - \gamma(d-1) & < 0. \label{eq:cond:gamma:2}
\end{align}
\end{subequations}
and $\lambda$ is large enough. 

\bigskip
We still have to choose $\alpha, \beta, \gamma$ so that  \eqref{eq:cond:1}, \eqref{eq:cond:2} are satisfied. This can be easily done as follows, recalling that $p < d-1$. First we fix $\alpha > 1$ so that
\begin{equation*}
\alpha > \frac{1}{\frac{d-1}{p} - 1}
\end{equation*} 
so that \eqref{eq:cond:alpha} is satisfied. Then we choose $\beta$ so that
\begin{equation*}
\beta > 1 + \alpha,
\end{equation*}
so that \eqref{eq:cond:beta} is satisfied. Finally we choose $\gamma>1$ so that
\begin{equation*}
\gamma > \frac{\beta}{\frac{d-1}{p} - 1} \text{ and } \gamma > \frac{1 + \alpha d - \beta}{d-1}
\end{equation*}
so that \eqref{eq:cond:gamma:1} and \eqref{eq:cond:gamma:2} are satisfied. This concludes the proof of Proposition \ref{p:main} and thus also the proof of Theorem \ref{thm:main} and, consequently, the proof of Theorem \ref{thm:nonuniq}.

\ifjems
\bigskip
\footnotesize
\noindent\textit{Acknowledgments.} \ackn \fi

\bibliographystyle{acm}
\bibliography{transport}

\end{document}